\newcommand{\norm}[1]{\left\lVert#1\right\rVert} % Definition of the norm
\newtheorem{theorem}{Theorem}
\newtheorem{proposition}[theorem]{Proposition}
\newtheorem{corollary}[theorem]{Corollary}
\newtheorem{definition}[theorem]{Definition}
\theoremstyle{remark}
\newtheorem{example}{Example}
\newtheorem{remark}{Remark}
\def\halpha{\hat{\alpha}}
\def\hbeta{\hat{\beta}}
\def\R{\mathbb{R}}
\def\hL{\hat{L}}
\def\hvarphi{\hat{\varphi}}
\def\hphi{\hat{\phi}}
\def\e{\epsilon}
\def\a{\alpha}
\def\b{\beta}
\def\ha{\halpha}
\def\hb{\hbeta}
\def\htheta{\hat{\theta}}
\def\hx{\hat{x}}
\title{Approximating Symplectic Realizations: \newline A General Framework for the Construction of Poisson Integrators}
\author[1]{Alejandro Cabrera}
\author[2]{David Mart{\'\i}n de Diego}
\author[3]{Miguel Vaquero}
\affil[1]{UFRJ, Rio de Janeiro (alejandro@matematica.ufrj.br)}
\affil[2]{ICMAT (david.martin@icmat.es)}
\affil[3]{IE University (mvaquero@faculty.ie.edu)}
\date{}
\begin{document}
\maketitle

\begin{abstract}
While the construction of symplectic integrators for Hamiltonian dynamics is well understood, an analogous general theory for Poisson integrators is still lacking. The main challenge lies in overcoming the singular and non-linear geometric behavior of Poisson structures, such as the presence of symplectic leaves with varying dimensions. 
%One approach to circumvent this issue is to utilize (strict) symplectic realizations. 
%Symplectic realizations provide a regular symplectic structure where well-known objects, such as Lagrangian submanifolds and momentum mappings, can be employed to study Hamiltonian dynamics. 
In this paper, we propose a general approach for the construction of geometric integrators on {\it any} Poisson manifold based on independent geometric and dynamic sources of approximation. The novel geometric approximation is obtained by adapting  structural results about symplectic realizations of general Poisson manifolds.
 We also provide an error analysis for the resulting methods and illustrative applications.
%However, despite the existence of various theoretical constructions for symplectic groupoids, none of them can be directly applied to the development of numerical methods in a general way. In this paper, we propose addressing this problem by approximating the symplectic groupoid that integrates the Poisson manifold of interest. We then carefully analyze how the error of this approximation propagates in the design of Poisson integrators. Consequently, this paper aims to study geometric integrators through a novel approach that decouples the geometric and dynamic approximations.
\end{abstract}
\newpage
\tableofcontents

\section{Introduction}
Hamiltonian dynamical systems play a crucial role in many branches of science, including theoretical and applied physics, differential geometry, optimal control theory, economics, biology, robotics, and computer graphics. See~\cite{economics,marsden3,murray} and the references therein. The versatility and robustness of Hamiltonian systems make them powerful tools across diverse disciplines, enhancing our ability to model, analyze, and predict the behavior of complex systems.
%
%
%These systems, governed by Hamilton’s equations, preserve important quantities such as energy and momentum, leading to a deeper understanding of conservation laws in physics. Furthermore, the symplectic structure inherent in Hamiltonian systems ensures the stability and accuracy of numerical methods used in simulations, making them indispensable for scientific computations. This approach is strongly supported by the success seen in the symplectic setting, where the geometric integration community has demonstrated that this philosophy typically leads to more accurate integrators, both qualitatively and quantitatively. For further details about symplectic integration see~\cite{haluwa06}. Conspicuously, Hamiltonian systems are also related to the performance of optimization methods and to better simulations in statistics, see~\cite{betancourt,diakonicolas,jordan}.  Consequently, it is very desirable to develop numerical schemes that conserve the underlying geometric objects. 
%
In this context, it becomes evident the need for the development of \emph{geometric numerical integrator methods} which are adapted to the underlying geometry. 
In the case of \emph{symplectic} Hamiltonian system, such numerical methods have already been developed and applied successfully, see~\cite{haluwa06} for a general discussion.

On the other hand, it remains an open problem to extend these geometric methods to the case of general \emph{Poisson} Hamiltonian systems (\cite{Channell-Scovel,colasal23,FengQin,Ge,haluwa06,Karasozen,McLachlan_2014,LiXiWa}). In this paper, we present a general and practical approach for the construction of such Poisson integrators.

\vspace{0.25cm}

{\bf Challenges:} The main obstacles in the development of Poisson integrators arise from the singular and non-linear properties of Poisson geometry, as discussed in~\cite{Crainic-Fernades-Marcut}. Unlike symplectic geometry, where the symplectic two-form is non-degenerate, the Poisson tensor can abruptly change rank from one point to another. This phenomenon results in symplectic leaves, where solutions must lie, of varying dimensions and non-trivially glued together. Moreover, the general non-linear nature of the Poisson structure makes it impossible to rely on an explicit parametrization of the symplectic leaves, so that the method must handle them all at once. Consequently, designing numerical schemes for Poisson manifolds differs significantly from symplectic integration and requires more sophisticated techniques. Furthermore, even in the symplectic case, most methods rely on explicit Darboux coordinates, which are generally unavailable for non-canonical symplectic manifolds \cite{Albert-Kas-Ker,Austin-KWL,FengWang,haluwa06}. Thus, the particular case of designing general methods for non-canonical symplectic forms is already a challenging task.

\vspace{0.25cm}

\textbf{Setting of the problem:} We consider a general Poisson manifold $(M, \pi)$ together with a smooth Hamiltonian function $H: M \rightarrow \mathbb{R}$. Our main objective is to provide an approximation method for the flow of the corresponding Hamilton's equation,
\begin{equation}\label{eq:hameq} \dot x = \pi^\sharp(dH)|_x, \ x(t) \in M.\end{equation}
We shall denote $X_H :=\pi^\sharp(dH)\in \mathfrak{X}(M)$ the corresponding Hamiltonian vector field.  Thus, our goal is to develop a scheme that approximates the dynamics of $X_H$ while preserving the same geometric properties as the original flow:

\begin{itemize}
\item The approximation should {\it conserve the Hamiltonian} to a certain degree, corresponding to $X_H(H)=\pi(dH,dH) = 0$.
\item The approximation scheme should {\it preserve the underlying Poisson geometry}, corresponding to $\mathcal{L}_{X_H}\pi = 0$ (a known consequence of Jacobi identity for $\pi$). Additionally, we highlight the property $\mathcal{L}_{X_H}C = \pi(dH,dC) = 0$ for any Casimir function $C$. %That is, the flow of $X_H$ should preserve the Poisson tensor $\pi$ and the Casimir functions.
\end{itemize}
The condition regarding the preservation of Casimirs is recalled since it allows to have a direct control on how $x(t)$ restricts to the symplectic leaves of $M$ in concrete examples.

\medskip

{\bf The realization approach:} One interesting general approach to addressing the approximation problem highlighted above is through the introduction of an auxiliary regularizing structure: a (strict\footnote{This is the nomenclature of \cite{codawein87}, but since it will be the only type of realization considered, we do not specify it in the sequel.}) {\it symplectic realization} of $(M,\pi)$, in the sense of \cite{codawein87}. 
Such a structure comes with \emph{realization data} $R=(S,\omega,\alpha,\beta,\sigma)$ (see Sec. \ref{subsec:realizdata} below) which can be used to produce Poisson diffeomorphisms $\varphi_L:M\to M$ for each choice of so-called Lagrangian bisection $L\hookrightarrow (S,\omega)$. 
The realization approach thus consists on observing that the desired \emph{dynamic approximation} can be translated into the matter of choosing a Lagrangian bisection $\hL$ in $R$ such that $\varphi_{\hL}$ approximates the flow of our ODE \eqref{eq:hameq}. Notice that, by construction, the approximation preserves the Poisson geometry of $(M,\pi)$ exactly.

Such symplectic realizations are intimately related to (local) symplectic groupoid structures and these concepts have already been successfully exploited in references~\cite{co23,colasal23,felemamava17} to develop a theoretical framework for Poisson integrators, as well as to study the dynamics of Poisson systems \cite{vaquero2023symmetry}.
Moreover, this approach can also be seen as ``universal'' for Poisson integrators: since any Poisson diffeomorphism can be constructed using this scheme,  virtually any Poisson integrator could be designed following a general procedure outlined in~\cite{felemamava17}. 

On the other hand, to transform this theoretical approach into concrete integration methods one is strongly limited by the need to know explicitly the symplectic realization data $R$ for the given $(M,\pi)$; it is well-known that such situations are rare in practice.

\medskip

{\bf The new method:}
In this paper, we propose to combine the above realization approach with an initial \emph{geometric approximation} step which produces approximate realization data $\hat R$ for $(M,\pi)$. We implement this step by adapting known structural results about symplectic realizations. The resulting complete method, combining $\hat R$ with a dynamic approximation $\hat L$, yields a general practical integration method for every $(M,\pi)$. This methodology incorporates a decoupling between the geometric and dynamic approximations, which, to the best of the authors' knowledge, is novel and yet to be fully explored. We also provide detailed analysis of the impact of the approximation on the numerical schemes and illustrate the method in concrete examples.

In more detail, the complete method consists of two stages:
\begin{itemize}
\item \emph{Geometric Approximation, $\hat R$:} Theoretically, exact symplectic realization data $R$ always exists (\cite{ca22,cadhe26,Karasev,karasev1993nonlinear}). Nonetheless, we need an explicit description of it to incorporate into our numerical schemes. Here, we propose to approximate the symplectic realization through explicit power series expansions (building on \cite{cadhe26}). 
We show that, by constructing an approximation $\hat R$ of the geometric structures involved in the description of the realization, we indeed obtain methods that preserve the underlying Poisson geometry approximately to a certain order.

\item \emph{Dynamic Approximation, $\hL$:}
Theoretically, the Lagrangian bisection $L$ in $R$ inducing the exact flow of \eqref{eq:hameq} always exists (see a review in Section \ref{subsec:realizdata} below). In this step, we provide an approximation $\hL$ which combines with the approximate geometric data $\hat R$ into a complete approximation method for the dynamics defined by $(M,\pi,H)$ while also preserving the Poisson geometry of $(M,\pi)$ to a certain order.
\end{itemize}

\medskip

{\bf Results:}
We explore the properties of the resulting complete approximation methods which are a combination of both approximations $\hat R$ and $\hL$ above, leading to the following main contributions:

\vspace*{0.25cm}

\emph{Main Results 1:} (Geometric approximation) We present a general scheme for obtaining order $n$ (for arbitrary $n$) approximations $\hat R$ of the mapping structures defining the symplectic realization $R$ that integrates locally any Poisson manifold. We show that any Lagrangian bisection $\hL$ for $\hat R$ induces a diffeomorphism $M\to M$ which preserves the Poisson structure $\pi$ and Casimirs $C$ up to order $n$. See Section \ref{sec:geomaprox} and Theorem \ref{thm:1}.

\vspace*{0.25cm}
\emph{Main Results 2:} (Orders of the general combined method) We show that the combination of geometric and dynamic approximations, $\hat R$ and $\hL$, results in explicit numerical integrator method for \eqref{eq:hameq} and describe its total order of approximation both of the dynamics and of the underlying Poisson geometry. See Section \ref{subsec:complete} and Theorem \ref{thm:2}.

\vspace*{0.25cm}
\emph{Main Results 3:} (Implementing dynamic approximation) Building on our geometric approximation scheme $\hat R$, we provide two concrete approaches for the dynamic approximation $\hL$: one based on the Hamilton-Jacobi theory and another based on collective integrators. The theoretical properties of both approaches are carefully discussed. See Sections \ref{subsection:HJapproximation} and \ref{subsec:collective}.

\medskip

As mentioned before, these results can also be applied to symplectic Hamiltonian systems, where $\pi=\omega^{-1}$ for a non-canonical symplectic form $\omega$.
We also note that, since we aim at local numerical methods which can be applied computationally, most of the relevant constructions are specialized to the case where $M$ is replaced by a coordinate chart endowed with an arbitrary Poisson structure $\pi$. Nonetheless, we will describe most of our results and constructions in a global setting, with the local versions following from the obvious restrictions.

\bigskip

{\bf Outline of the paper:} This paper is organized as follows. In Section \ref{sec:notation}, we introduce basic concepts and notation, and we review the ingredients behind the general realization approach to Poisson integrators. In Section \ref{sec:geomaprox}, we present our construction of approximate realization data based on a truncation of Karasev's symplectic realization and study its consequences in Theorem \ref{thm:1}. In Section \ref{sec:methods}, we describe the novel class of complete methods for Poisson integrators, proving their orders of approximation in Theorem \ref{thm:2}, and describing concrete versions of them in subsections \ref{subsection:HJapproximation} and \ref{subsec:collective}. In Section \ref{sec:simulations}, we provide computational illustrations by applying the methods in concrete problems. Finally, in Section \ref{sec:conclusions}, we discuss conclusions and directions of future work.

%%%%%%%%%%%%%%%%%%%%%%%%%%%%%%%%%%%%%%%%%%%%%%%%%%%%%%
%%%%%%%%%%%%%%%%%%%%%%%%%%%%%%%%%%%%%%%%%%%%%%%%%%%%%%
%\newpage % just to make the margin comments more organized here

\section{Notation and preliminaries}\label{sec:notation}
%%%%%%%%%%%%%%%%%%%%%%%%%%%%%%%%%%%%%%%%%%%%%%%%%%%%%%
%%%%%%%%%%%%%%%%%%%%%%%%%%%%%%%%%%%%%%%%%%%%%%%%%%%%%%
In this section, we introduce the definitions and notations of the main geometric objects to be used. We also describe the \emph{general realization approach} to Poisson integrators.

%\marginAC{I removed the things about analytic functions, it was not used!}

%%%%%%%%%%%%%%%%%%%%%%%%%%%%%%%%%%%%%%%%%%%%%%%%%%%%%%%%%%%%%%%%%
\subsection{Basic Poisson and symplectic geometry}

%\paragraph{$\bullet$ Symplectic, Lagrangian, and Poisson Manifolds:}  
We recall here the basic geometric structures used along the paper. See also~\cite{AM87,marsden3} for a complete description of these topics. 

%Again, we would like also to stress that although most structures are introduced in the smooth category for the sake of completeness, we work almost exclusively in the analytic category (where all mappings are assumed to be analytic).\marginAC{Not sure...}

\medskip

    A  {\bf Poisson structure} on a  differentiable manifold $M$ is given by a bilinear map
\[
\begin{array}{rcc}
C^{\infty}(M)\times C^{\infty}(M)&\longrightarrow& C^{\infty}(M)\\
(f, g)&\longmapsto& \{f, g\}
\end{array}
\]
called the  Poisson bracket, satisfying the following properties: for all $f, g, h\in C^{\infty}(M)$,
\begin{itemize}
\item[(i)] \emph{Skew-symmetry},  $\{g, f\}=-\{f, g\}$;
\item[(ii)] \emph{Leibniz rule}, $\{fg, h\}=f\{g, h\}+g\{f, h\}$; 
\item[(iii)] \emph{Jacobi identity},  $\{\{f, g\}, h\}+\{\{h, f\}, g\}+\{\{g, h\}, f\}=0$.
\end{itemize}
We call the pair $(M, \{\,,\,\})$ a {\bf Poisson manifold}. 
A Poisson bracket is equivalent to a Poisson tensor field $\pi\in \mathfrak{X}^2(M)$ by $$\pi(df, dg)=\{f, g\}.$$ We will usually refer to a Poisson manifold as $(M,\pi)$.

\begin{example}[Dual of a Lie Algebra, $\mathfrak{g}^*$]
    If ${\mathfrak g}$ is a Lie algebra with Lie bracket $[\; ,\; ]$, then it is defined a   Poisson bracket on
 ${\mathfrak g}^*$  by 
 \[
 \{\xi, \eta \}(\alpha) = -\langle\alpha ,[\xi,\eta]\rangle\; ,
 \] where $\xi$ and $\eta \in
{\mathfrak g}$ are equivalently considered as linear forms on ${\mathfrak g}^*$, and $\alpha \in {\mathfrak g}^*$.
This  linear Poisson structure on ${\mathfrak g}^*$ is called the
{\it Kirillov-Kostant-Souriau Poisson structure}. 
\end{example}

    A {\bf Poisson map}
 between two Poisson manifolds \((M, \{ \cdot, \cdot \}_M)\) and \((N, \{ \cdot, \cdot \}_N)\) is a smooth map \(\phi: M \to N\) that preserves the Poisson brackets. This means that for any pair of smooth functions \(f, g\) on \(N\), the map \(\phi\) satisfies:
\[
\{ \phi^*f ,\phi^* g  \}_M = \phi^*\{ f, g \}_N .
\]
%In other words, the pullback of the Poisson bracket on \(N\) by \(\phi\) equals the Poisson bracket on \(M\).
%
Given a function $H\in C^\infty(M)$, its associated {\bf Hamiltonian vector field} $X_H\in \mathfrak{X}(M)$ is defined by 
\[ \{H,f\} = X_H(f), \ \forall f\in C^\infty(M).\]
The ODE on $M$ associated to the vector field $X_H$ is exactly our main equation \eqref{eq:hameq}.

\medskip

We now turn to symplectic geometry. A {\bf symplectic structure} on $M$ is a Poisson structure in which the bracket is non-degenerate,
\[ \{f,g\}= 0\, , \ \forall g\in C^\infty(M) \Rightarrow df = 0.\]
This is equivalent to the tensor $\pi_x \in \Lambda^2 T_x M$ being invertible at each $x\in M$. In this case, the inverse (as a bilinear form)
\[ \omega_x = \pi_x^{-1} \in \Lambda^2 T^*_x M\]
defines a $2$-form, $\omega \in \Omega^2(M)$, which is closed, that is $d\omega =0$. In this way, we arrive to the familiar description of a symplectic manifold as a pair $(M,\omega)$ where $M$ is a manifold and $\omega$ is a closed, non-degenerate, $2$-form $\omega$. (In this paper, the conventional signs when prescribing the inverse and Hamiltonian vector fields for $\omega$ are set so that $\{H,f\}_\omega=L_{X_H}f$.)

\begin{example}[Canonical Symplectic Form in the Cotangent Bundle]
    The cotangent bundle of a manifold, say $T^*M$, is endowed with a canonical symplectic form by taking\footnote{Notice the convention $\omega_M=dp_i \wedge dx^i$ intead of $dx^i\wedge dp_i$, which we adopt to have a direct relation to the conventions of \cite{ca22,cadhe26} in the context of symplectic realization constructions.} $\omega_M = d\theta$, where $\theta$ is the Liouville one form. In canonical coordinates $(x^i,p_i)$, these forms take the familiar expressions $\omega_M=dp_i\wedge dx^i$ and $\theta = p_i dx^i$; the corresponding Poisson brackets have $\{x^i,p_j\}_M=\delta_{ij}$ and the Hamiltonian vector field reads $X_H = -\partial_{p_i}H \ \partial_{x^i} + \partial_{x^i}H \ \partial_{p_i}$.
\end{example}

As well known, Lagrangian submanifolds play a crucial role in symplectic geometry and will also be key ingredients of this paper. A {\bf Lagrangian submanifold} $i:L\hookrightarrow M$ of a symplectic manifold $(M,\omega)$ is an embedded submanifold which is maximal among embedded isotropic submanifolds, namely, among submanifolds such that $i^*\omega =0$. We recall that $dim(M)$ is even and that $dim(L)=dim(M)/2$ for any Lagrangian submanifold.

\begin{example}[Type I Generating Functions] Given a manifold $M$ and its cotangent bundle $T^*M$, any differentiable function $S:M \rightarrow \mathbb{R}$ produces a Lagrangian submanifold in $(T^*M,\omega_M)$ by just taking $$L:=graph(dS)=\{dS|_x: \ x\in M\} \subset T^*M.$$ In canonical coordinates, $(x^i,p_i)$, this submanifold is determined by the equations $p_i=\partial_{x^i} S(x)$ with $x\in M$.
  \end{example}

%%%%%%%%%%%%%%%%%%%%%%%%%%%%%%%%%%%%%%%%%%%%%%%%%%%%%%
\subsection{Symplectic realization data, bisections and induced Poisson diffeomorphisms}\label{subsec:realizdata}
%%%%%%%%%%%%%%%%%%%%%%%%%%%%%%%%%%%%%%%%%%%%%%%%%%%%%%

The next definitions will become instrumental throughout the paper, see~\cite{codawein87,Weinstein} for more details.

\begin{definition}[Symplectic Realization]
A {\bf symplectic realization} $(S,\omega,\a,\sigma)$ of a Poisson manifold \((M, \{ \cdot, \cdot \})\) consists of a symplectic manifold \((S, \omega)\), a Poisson map \(\alpha: (S, \{ \cdot, \cdot \}_\omega) \to (M, \{ \cdot, \cdot \})\), where \(\{ \cdot, \cdot \}_\omega\) are the symplectic brackets on $S$, and a section $\sigma:M\to S$ of $\alpha$, $\alpha\circ \sigma = id_M$, such that $\sigma(M)\hookrightarrow (S,\omega)$ is a Lagrangian submanifold.
\end{definition}
As mentioned in the Introduction, this is the type of realization called ``strict'' in \cite{codawein87} and is the only type that we shall consider in this paper. 

Following that reference further, we now describe the {\bf dual map $\beta$ associated with} a given symplectic realization $(S,\omega,\alpha,\sigma)$.
This map $\beta:U\subset S \to M$ is a submersion defined on a neighborhood $U$ of $\sigma(M)$ and is characterized by the properties:
\begin{itemize}
    \item $\sigma$ is also a section of $\beta$, $\beta\circ \sigma = id_M$,
    \item the $\beta$-fibers and the $\alpha$-fibers are symplectic orthogonal,
    \[ Ker(D_y\beta)^\omega = Ker(D_y\alpha), \ y\in U. \]
\end{itemize}
In \cite{codawein87}, it is shown that, given a symplectic realization, such a dual map $\beta$ exists, that it is an anti-Poisson morphism (that is, a Poisson map onto $(M,-\pi)$), and that its germ around $\sigma(M)$ is uniquely determined by the realization. 

By replacing $S$ with a suitable small enough neighborhood of $\sigma(M)$, we can assume that both maps are defined on the entire $S$. Our constructions will indeed be only based on local considerations around $\sigma(M)\subset S$. Moreover, independently of how the maps are obtained, our constructions will only depend on having such maps $\alpha,\beta,\sigma$ with the properties highlighted above. We resume this discussion with the following definition.

\begin{definition}
    We say that  $R:=(S,\omega,\alpha,\beta,\sigma)$ defines {\bf realization data} for $(M,\pi)$ when:
    \begin{itemize}
        \item $(S,\omega)$ is a symplectic manifold,
        \item $\alpha,\beta:(S,\omega)\to (M,\pi)$ are Poisson and anti-Poisson maps, respectivelly, having $\omega$-orthogonal fibers,
        \item $\sigma$ is a section for both $\alpha$ and $\beta$, and $\sigma(M)\hookrightarrow (S,\omega)$ is Lagrangian.
    \end{itemize}

\end{definition}

\begin{remark}[Dual pairs]
    The data given by the two maps $\alpha,\beta:(S,\omega)\to (M,\pi)$, with $\alpha$ being Poisson and $\beta$ being anti-Poisson, with domain a symplectic manifold and with symplectically orthogonal fibers, is known as a \emph{dual pair}. All the constructions in this paper can be formulated as originating from such a dual pair with the extra data of a map $\sigma:M\to S$ which is a section for both $\alpha$ and $\beta$, and whose image is Lagrangian in $S$. Such a section for both $\alpha$ and $\beta$ determines $\sigma(M)\subset S$ a type of submanifold called \emph{bisection} which will play a key role below.
\end{remark}

\begin{remark}[Symplectic groupoids]\label{rmk:groupoids}
    Another important result from \cite{codawein87} is that a symplectic realization $(S,\omega,\alpha,\sigma)$ as above determines a unique germ of local symplectic groupoid structure $(S,\omega)\rightrightarrows M$ around $\sigma(M)$. In this local groupoid, $S$ is the set of arrows, $M$ is the set of objects, $\alpha$ is the source map, and $\sigma$ is the identities map. Moreover, the dual map $\beta$ is the target map, there is an inversion map $inv:U_{inv}\subset S \to S$ defined around $\sigma(M)$, and there is a multiplication map 
    \[ m: U_{(2)} \subset \{ (z_1,z_2)\in S\times S: \alpha(z_1)=\beta(z_2)\} \to S\]
    defined on a neighborhood of $\{(\sigma(x),\sigma(x)):x\in M\}$. All the structure maps satisfy local versions of the algebraic groupoid axioms around the units and $\omega$ becomes \emph{multiplicative}. Conversely, any local (or global) symplectic groupoid determines a symplectic realization with $(S,\omega)$ the space of arrows, $\alpha$ the source map, and $\sigma$ the identity map.
    
    We thus see that (local or global) symplectic groupoids provide examples of the symplectic realization data needed for our method. On the other hand, we emphasize that \emph{the multiplication map does not enter the construction of this paper}. (The incorporation of the multiplication map into the methods will be explored elsewhere, \cite{cacanar}.)
\end{remark}

\medskip

We now describe how to use realization data for $(M,\pi)$ in order to produce Poisson diffeomorphisms. The idea is that this approach can be used to provide Poisson maps which approximate the flow of our ODE \eqref{eq:hameq}.
%
%To this end, we introduce Lagrangian bisections for the realization data and explain how they induce Poisson maps on the underlying $(M,\pi)$.

\begin{definition}[Lagrangian Bisection and the induced mapping]\label{def:lagrangian_bisection} Given realization data $(S,\omega,\alpha,\beta,\sigma)$ for $(M,\pi)$, a {\bf  bisection} is a submanifold $L \subset S$ such that both restrictions $\alpha|_L,\beta|_L: L \to M$ are diffeormofisms. When, additionally, $L\hookrightarrow (S,\omega)$ is Lagrangian, we say that it determines a {\bf Lagrangian bisection}.
   A bisection $L$ determines an {\bf induced diffeormorphism} by the rule
    \[ \varphi_L=\alpha\circ(\beta_{|L})^{-1}:M\rightarrow M.\]
\end{definition}
Note that, replacing $M$ with a suitable open, one obtains an analogous notion of {\bf local} (Lagrangian) bisection and its induced locally defined diffeomorphism of $M$. Also note that $\sigma(M)$ is, by definition, a Lagrangian bisection and that the induced map is the identity,
\[ \varphi_{\sigma(M)}=id_M. \]

The following compilation of results is the heart of the \emph{realization approach} for Poisson integrators.%\marginAC{XX find exact reference!}
\begin{theorem}\label{CDW}(\cite{codawein87}) Let $R=(S,\omega,\alpha,\beta,\sigma)$ be realization data for $(M,\pi)$. Then, 
\begin{enumerate}
%\item if $\alpha|_L$ is a diffeomorphism $\Rightarrow$ $L$ is a Lagrangian bisection, \marginAC{need $L$ small ?}
\item when $L\hookrightarrow (S,\omega)$ is a Lagrangian bisection for $R$, the induced map $\varphi_L$ defines a Poisson diffeomorphism $(M,\pi)\to (M,\pi)$.
\item if $\phi^H_t:M \to M$ is the Hamiltonian flow on $(M,\pi)$ defined by $H\in C^\infty(M)$, then
\[ \phi^H_t = \a \circ \phi^{\a^*H}_t \circ \sigma,\]
with $\phi^{\a^*H}_t:S\to S$ the Hamiltonian flow of $\alpha^*H$ in $(S,\omega)$.
\item  In the setting of the previous item,
\[ \phi_t^H = \varphi_{L_t}\text{ for the Lagrangian bisection }L_t = \phi^{\alpha^*H}_t(\sigma(M))\, .\]
 
\end{enumerate}
\end{theorem}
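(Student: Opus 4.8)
My plan is to treat the three items as a cascade resting on two structural facts that I would establish first. Fact (A): since $\alpha$ is a Poisson map, for any $f\in C^\infty(M)$ the Hamiltonian vector field $X_{\alpha^* f}$ on $(S,\omega)$ is $\alpha$-related to $X_f$ on $(M,\pi)$; indeed, for all $g$ we have $d(\alpha^* g)(X_{\alpha^* f})=\{\alpha^* f,\alpha^* g\}_\omega=\alpha^*\{f,g\}_M=dg(X_f)\circ\alpha$, so $D\alpha\circ X_{\alpha^* f}=X_f\circ\alpha$. Fact (B): the $\omega$-orthogonality of fibers, which gives $\ker(D\beta)=(\ker D\alpha)^\omega$, forces $X_{\alpha^* f}\in\ker D\beta$, because $\omega(X_{\alpha^* f},v)=d(\alpha^* f)(v)=0$ for every $v\in\ker D\alpha$. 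In particular $\beta$ is constant along the flow of $X_{\alpha^* f}$, and the ``cross brackets'' vanish, $\{\alpha^* f,\beta^* h\}_\omega=0$, so that $(\alpha,\beta)\colon (S,\omega)\to (M,\pi)\times(M,-\pi)$ is a Poisson map (the dual-pair property recalled above).

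For item 1 I would use this Poisson-pair map together with the Lagrangian hypothesis. Since $\beta|_L$ is a diffeomorphism, $(\alpha,\beta)$ restricts to a diffeomorphism of $L$ onto its image, which is precisely the graph $\Gamma=\{(\varphi_L(x),x):x\in M\}$ of $\varphi_L$. The assertion that $\varphi_L$ is Poisson is then equivalent, by Weinstein's graph criterion \cite{Weinstein}, to $\Gamma$ being coisotropic in $(M,\pi)\times(M,-\pi)$, and this I would deduce from $L$ being Lagrangian in $(S,\omega)$ via the Poisson pair of Fact (B). Alternatively, one can verify bracket preservation directly: writing $j=(\beta|_L)^{-1}$, so that $\varphi_L=\alpha\circ j$ with $j^*\omega=0$, one expands $\{f\circ\varphi_L,g\circ\varphi_L\}_M$ and uses Facts (A)--(B) to identify it with $\{f,g\}_M\circ\varphi_L$. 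I expect this item to be the main obstacle, precisely because it is where the Lagrangian condition must be combined with the orthogonality of fibers; the delicate point is controlling the pushforward of the (possibly degenerate) Poisson pair along $L$, i.e.\ justifying the coisotropicity of $\Gamma$.

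Item 2 I would obtain immediately from Fact (A). Since $X_{\alpha^* H}$ is $\alpha$-related to $X_H$, their flows intertwine, $\alpha\circ\phi^{\alpha^* H}_t=\phi^H_t\circ\alpha$ wherever defined. Composing on the right with the section $\sigma$ and using $\alpha\circ\sigma=\mathrm{id}_M$ yields $\alpha\circ\phi^{\alpha^* H}_t\circ\sigma=\phi^H_t\circ\alpha\circ\sigma=\phi^H_t$, as claimed.

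For item 3 I would first check that $L_t=\phi^{\alpha^* H}_t(\sigma(M))$ is a Lagrangian bisection. It is Lagrangian because $\phi^{\alpha^* H}_t$ is a symplectomorphism of $(S,\omega)$ and $\sigma(M)$ is Lagrangian. It is a bisection because, by Fact (B), $\beta$ is invariant under the flow, so $\beta\circ\phi^{\alpha^* H}_t\circ\sigma=\beta\circ\sigma=\mathrm{id}_M$; hence $\beta|_{L_t}$ is a diffeomorphism with inverse $j_t=\phi^{\alpha^* H}_t\circ\sigma$. Then $\varphi_{L_t}=\alpha\circ j_t=\alpha\circ\phi^{\alpha^* H}_t\circ\sigma=\phi^H_t$ by item 2, and this also shows $\alpha|_{L_t}=\phi^H_t\circ\beta|_{L_t}$ is a diffeomorphism, completing both the verification that $L_t$ is a bisection and the identity $\phi^H_t=\varphi_{L_t}$.
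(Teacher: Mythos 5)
Your proposal is correct, but note that the paper does not actually prove Theorem \ref{CDW}: it imports all three items from \cite{codawein87}, and the only argument offered in the text is the observation that $\omega$-orthogonality of the $\alpha$- and $\beta$-fibers forces $\beta$ to be conserved along $\phi^{\alpha^*H}_t$, which is precisely your Fact (B) and is what the paper says ``explains the last item.'' So your items 2 and 3 reproduce, with full details, the only argument the paper sketches, while your item 1 supplies a self-contained proof where the paper delegates entirely to the reference. The one step you flag as delicate --- coisotropicity of $\Gamma=(\alpha,\beta)(L)$ --- does go through, and can be closed in two lines, so there is no genuine gap: write $J=(\alpha,\beta):S\to M\times\bar M$, where $\bar M=(M,-\pi)$ and $\Pi=\pi\oplus(-\pi)$; your Facts (A)--(B) show $J$ is Poisson, which means $\Pi^\sharp=DJ\circ\omega^\sharp\circ(DJ)^*$ at points of the image of $J$. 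If $\xi$ is conormal to $\Gamma$ at $J(l)$, then $(D_lJ)^*\xi$ annihilates $T_lL$ (because $T\Gamma=DJ(TL)$, as $J|_L$ is a diffeomorphism onto $\Gamma$), and since $L$ is Lagrangian one has $\omega^\sharp(N^*L)=(TL)^\omega=TL$, whence
\[
\Pi^\sharp\xi \;=\; DJ\bigl(\omega^\sharp (DJ)^*\xi\bigr)\;\in\; DJ(TL)\;=\;T\Gamma,
\]
so $\Gamma$ is coisotropic, and since $\Gamma$ is the graph of the diffeomorphism $\varphi_L$, Weinstein's graph criterion (an if-and-only-if for graphs) yields that $\varphi_L$ is Poisson. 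Comparing the two routes: your argument is short, convention-independent, and never invokes the local symplectic groupoid structure, using only the dual-pair data $(\alpha,\beta,\sigma)$ that the paper isolates in its definition of realization data; the paper's route (citation to \cite{codawein87}) buys, in addition, the structure theory that the rest of the paper relies on elsewhere --- existence and uniqueness of the dual map $\beta$, and the local groupoid of Remark \ref{rmk:groupoids} whose inversion map is used in the proof of Proposition \ref{prop:Sodd}.
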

We observe that, since $\a$ and $\b$ have symplectically orthogonal fibers, it follows that 
\[ \text{$\b$ is conserved along the flow $\phi^{\a^*H}_t$ in $(S,\omega)$.}\]
This fact explains the last item.
We also observe that, while the former are Poisson objects (singular), the latter are symplectic constructions (regular); this is at the heart of the use of symplectic realizations in numerical methods for $(M,\pi,H)$.

%%%%%%%%%%%%%%%%%%%%%%%%%%%%%%%%%
\subsection{The general realization approach to Poisson integrators}\label{subsec:realmeth}
 We can now describe in detail the general realization approach to approximately integrate the flow of \eqref{eq:hameq} on $(M,\pi,H)$, see ~\cite{co23,colasal23,felemamava17}. The key assumption is that we have at hand realization data $R=(S,\omega,\alpha,\beta,\sigma)$ for $(M,\pi)$.

From Thm. \ref{CDW}, we know that the exact flow $\phi^H_t:M \to M$ of eq. \eqref{eq:hameq}, for $t$ small enough, can be presented as an induced map $\varphi_{L_t}$ for the Lagrangian bisection $L_t = \phi^{\alpha^*H}_t(\sigma(M))$ in $R$. The key idea is then to replace $L_t$ by an approximation $\hL_t$, which is still a Lagrangian bisection for $R$, and then define the approximate flow $\hat \phi^H_t:M \to M$ of our ODE \eqref{eq:hameq} as
\[ \hat \phi^H_t:= \varphi_{\hL_t} = \alpha\circ \beta|_{\hL_t}^{-1}: M \to M.\]
The upshot of the method is that, by construction, $\hat \phi^H_t$ is a family of Poisson diffeomorphisms in $(M,\pi)$, thus preserving the ambient Poisson geometry. The degree of approximation of $\hat \phi^H_t$ to $\phi^H_t$ will depend on how well $\hL_t$ approximates $L_t$, see a discussion in \cite{co23}.

\begin{remark}[Rescaling $\pi$]\label{rmk:rescaling}
    Let us denote $\Phi^{\pi,H}_t:M\to M$ the Hamiltonian flow associated to $(M,\pi,H)$, putting in relevance in the notation the dependence on the Poisson tensor $\pi$. From the structure of the underlying ODE \eqref{eq:hameq}, it follows directly that rescaling time is equivalent to rescaling $\pi$ or $H$,
    \[ \Phi_{\lambda t}^{\pi,H}=\Phi_{t}^{\lambda\pi,H} = \Phi_{t}^{\pi,\lambda H}. \]
\end{remark}
 Having the above Remark in mind, let us assume that $R_\e=(S,\omega,\alpha_\epsilon,\beta_\epsilon,\sigma)$ is a family of realization data for the $\epsilon$-family $(M,\epsilon \pi), \ \epsilon\geq 0$. Fixing an additional ``time ste'' parameter $h>0$, we can then conclude that
    \begin{equation} \label{eq:rescaledflow}
    \phi_{t=\epsilon h}^H = \Phi^{\pi,H}_{\epsilon h} =\Phi^{\epsilon\pi,H}_{h} = \alpha_\epsilon\circ \beta_\epsilon|_{L_{\e,h}}^{-1} \text{ with }L_{\e,h}=\phi^{\a_\e^*H}_h(\sigma(M)).
    \end{equation}
    This is the precise approach we shall take in the sequel, in which we shall introduce both: \emph{geometric approximations} of order $O(\epsilon^{n+1})$ for the data $\alpha_\epsilon,\beta_\epsilon$ and \emph{dynamic approximations} $\hL_{\e,h}$ for $L_{\e,h}$ and each $\e$, as before. (See Section \ref{subsec:complete} where this discussion resumes.)

%%%%%%%%%%%%%%%%%%%%%%%%%%%%%%%%%%%%%%%%%%%%%%%%%%%%%%
%%%%%%%%%%%%%%%%%%%%%%%%%%%%%%%%%%%%%%%%%%%%%%%%%%%%%%
\section{Geometric approximation: the realization data}\label{sec:geomaprox}
%%%%%%%%%%%%%%%%%%%%%%%%%%%%%%%%%%%%%%%%%%%%%%%%%%%%%%
%%%%%%%%%%%%%%%%%%%%%%%%%%%%%%%%%%%%%%%%%%%%%%%%%%%%%%
In this section, we first review the construction of the Karasev symplectic realization (\cite{ca22,cadhe26}) and then present an approximation scheme for this symplectic realization. 
%in the context of analytic Poisson structures. 
The main objective is to demonstrate that if the realization data is approximated to order $n$, then the diffeomorphism $\varphi_L$ induced by a Lagrangian bisection $L$ is a Poisson diffeomorphisms up to order $n$. This will ensure that the error of the induced Poisson mappings can be controlled to a desired accuracy.

As explained in the Introduction, with an eye on computer-implementable methods, we mostly restrict to the case in which $M\simeq \mathbb{R}^n$ endowed with an arbitrary Poisson structure $\pi$.
%

%%%%%%%%%%%%%%%%%%%%%%%%%%%%%%%%%%%%%%%%%%%%%%%%%%%%%%
\subsection{Review of the Karasev realization}\label{subsec:symplectic_groupoid_construction}
%%%%%%%%%%%%%%%%%%%%%%%%%%%%%%%%%%%%%%%%%%%%%%%%%%%%%%

There are two local symplectic realizations that have been extensively studied in the literature: the Weinstein symplectic realization (\cite{Weinstein}) and the Karasev approach (\cite{karasev1993nonlinear}). In both cases, the realization space is given by $S\subset T^*M$ a neighborhood of the zero section. In the Weinstein case, the symplectic structure $\omega$ is a deformation of the canonical symplectic structure $\omega_M$ while the realization map is the projection $S\subset T^*M\to M$. In the Karasev case, the symplectic structure is canonical $\omega_M$ and the realization map $\alpha:S\to M$ is a deformation of the projection.
The connection between the two can be seen in~\cite{cadhe26}. 

%{\tiny Although not very relevant to this paper, it is worth mentioning that global obstructions to the integrability of Poisson manifolds have also been found; see~\cite{Crainic2002IntegrabilityOP}.}

In this paper, we shall work with the {\bf Karasev realization} which we now review. Let \(M=\mathbb{R}^n\) with coordinates $x^i$ and consider a general Poisson tensor \(\pi = \pi^{ij}(x) \partial x_i \wedge \partial x_j\). We want to construct realization data $(S,\omega,\alpha_\epsilon,\beta_\epsilon,\sigma)$ for $(M,\epsilon \pi)$, for any $\epsilon \geq 0$. 

To this end, we follow the presentation of \cite{cadhe26} and  introduce the following auxiliary objects. On \(T^*M = \mathbb{R}^n \times \mathbb{R}^n\), we consider canonical coordinates \((x^i, p_j)\) and the ``flat Poisson spray'' vector field
\begin{equation}
     \overline{V}(x,p) = -\pi(x)^{ij}p_i\partial x^j \in T_{(x,p)}(T^*M). \tag{$\overline{V}$}
    \end{equation}
Notice that the \(p\)-variables do not evolve. We denote the corresponding flow at time \(t\) by \(\varphi^{\overline{V}}_t\).
Next, we consider the corresponding ``$x$-average'' mapping \(\phi_\epsilon:T^*M \to M\) is defined by  
    \begin{equation}
    \phi^i_\epsilon(x,p) = \frac{1}{\epsilon}\int_0^\epsilon \left((\varphi^{\overline{V}}_s)^*x^i\right)\big|_{(x,p)} \, ds. \tag{$\phi_\epsilon$}
    \end{equation}

Then, following Karasev \cite{Karasev}, we define $\alpha_\e: U_\e \subset T^*M \to M$ on a neighborhood of $p=0$, via the implicit relation
         \begin{equation}\label{eq:alphaKar}
\phi_\epsilon(\alpha_\epsilon(x,p),p) = x.  
         \end{equation}
By the implicit function theorem, it follows that the above formula indeed defines a smooth map on a neighborhood $U_\e$ of the zero section $$0_M:=\{(x,p)\in T^*M:p=0\}\subset T^*M.$$
Note that $U_\e$ grows as $\e \to 0$ and that
\[ \alpha_\e(x,0)=\alpha_0(x,p) = x. \]

We thus obtain the following realization data.
\begin{definition}
    Let $(M\simeq \R^n, \pi)$ be Poisson and $\alpha_\e$ the map defined by \eqref{eq:alphaKar}. Then,
    \[ K_\e := (S,\omega,\alpha_\e,\beta_\e,\sigma) \]
    defines the {\bf Karasev realization data} for $(M,\pi)$, where $S\subset T^*M$ is a suitable small enough neighborhood of $0_M \subset T^*M$, $\omega=\omega_M$ is the canonical symplectic structure, 
    \[\beta_\e (x,p) = \alpha_\e(x,-p)\]
    and $\sigma(x)=(x,0)$.
\end{definition}
Details about why these maps indeed define realization data for $(M,\pi)$ can be found in \cite[\S 3.3]{ca22}.

\begin{remark}[Associated local Symplectic Groupoid]\label{rmk:Klocgd} Following \cite{ca22} further, the realization data $K_\e$ can be enriched to a local symplectic groupoid structure $G_\e \rightrightarrows M$ for each $\e\geq 0$. For latter use, in this local groupoid $G_\e$, the inversion map is given by $inv(x,p)=(x,-p)$. As remarked before, the multiplication map will not be used in this paper.
\end{remark}

\begin{remark}[Rescaling properties of $\alpha_\e$]\label{rmk:alpharescaling}
    We recall from \cite[Lemma 3.22]{ca22} that the Karasev realization $\alpha_\e$ has special rescaling properties:
    \[ \alpha_{\lambda \e}(x,p)=\alpha_\e(x,\lambda p).\]
    It then follows that
    \[ \phi^{\alpha_\e^*H}_{\lambda t} \circ \mu_\lambda= \mu_{\lambda}\circ\phi^{\alpha_{\lambda \e}^*H}_t ,\]
    where $\mu_\lambda(x,p)=(x,\lambda p)$ and $\lambda \in \R$.
\end{remark}

%%%%%%%%%%%%%%%%%%%%%%%%%%%%%%%%%%%%%%%%%%%%%%%%%%%%%%
\subsection{Approximate realization data through truncation}\label{subsec:Ktruncated}
%%%%%%%%%%%%%%%%%%%%%%%%%%%%%%%%%%%%%%%%%%%%%%%%%%%%%%
The idea in this subsection is to produce order $n$ approximations for the maps $\alpha_\e$ and $\beta_\e$ in the Karasev realization data $K_\e$. Before doing that, we define more generally such approximations.

To this end, we first recall that, for smooth maps $f,\hat f:(D\subset \R^n) \times I \to \R^m$ with $I\subset \R$ an interval containing $0$ and $D$ an open domain, we say that $\hat{f}$ is an  {\bf approximation of order $n$ of} $f$, denoted $f = \hat f \ mod \ \mathcal{O}(\e^{n+1})$, when 
\[
\norm{f(x,\epsilon) - \hat{f}(x,\epsilon)} = \mathcal{O}(\epsilon^{n + 1}),
\]
uniformly for $x$ varying in any compact in $D$, and for $\norm{\cdot{} }$  any norm in $\mathbb{R}^{m}$. 
We also recall the notation $F(\e) = \mathcal{O}(\e^k)$ if $|F(\e)|\leq C \e^k$ for some $C>0$ and for all $\e$ in a small enough neighborhood of $\e=0$. Notice that the definition of $f = \hat f \ mod \ \mathcal{O}(\e^{n+1})$ can be extended to $\e$-families defined on smooth manifolds $f,\hat f: M \times I \to N$. Nevertheless, as mentioned in the Introduction, we will be mostly interested on the cases $M\simeq \R^n, N\simeq \R^m$, so we do not review the details of the general case.

\begin{definition}
    Let $R_\e = (S,\omega,\alpha_\e,\beta_\e,\sigma)$ be realization data for $(M,\e \pi), \ \e\geq 0$. We say that $\hat R_\e=(S,\omega,\halpha_\e,\hbeta_\e,\sigma)$ defines {\bf approximate realization data of order $n$} when the maps $\halpha_\e$ and $\hbeta_\e$ are approximations of order $n$ of $\alpha_\e$ and $\beta_\e$, respectively, and with the additional condition that $\sigma$ is a bisection for all $\e$: $\halpha_\e \circ \sigma = \hbeta_\e \circ \sigma = id_M$.
\end{definition}
Let us note that, for any submanifold $L\subset S$ which is a (possibly local) bisection for both $\halpha_\e$ and $\hbeta_\e$ in the obvious sense, the corresponding induced map
\[ \hvarphi_L = \halpha_\e \circ \hbeta_\e|_L^{-1}: M \to M \]
is a well defined (possibly local) diffeomorphism.

\medskip

Next, we observe that, clearly, order $n$ approximations can be obtained for a smooth $\e$-family of functions $f_\e(x)=f(x,\e)$ by {\bf truncating its Taylor expansion} around $\e=0$,
\[ \tau^n[f_\e](x) := \sum_{k=0}^n \frac{\e^k}{k!} \partial^k_\e f|_{(x,0)}. \]

We thus come back to the Karasev realization data $K_\e = (S\subset T^*M,\omega_M, \alpha_\e,\beta_\e,\sigma)$ of $(M=\R^n,\e \pi)$ and consider the corresponding approximate realization data of order $n$,
\[ \hat K_\e = (S\subset T^*M,\omega_M, \halpha_\e:=\tau^n[\alpha_\e],\hbeta_\e:=\tau^n[\beta_\e],\sigma).\]
We call this the {\bf approximate Karasev realization data} of order $n$.
Note that the condition that $\sigma(x)=(x,0)$ is a bisection for $\halpha_\e,\hbeta_\e$ follows directly from
the rescaling property in Remark \ref{rmk:alpharescaling}.

\medskip

Finally, we observe that the Taylor expansion of $\alpha_\e$ around $\e=0$ (and, hence, of $\beta_\e$) in the Karasev realization was explicitly computed in \cite{cadhe26}, by means of the defining relation \eqref{eq:alphaKar}. In that reference, the coefficients in the expansion
\[ \tau^n[\alpha^i_\e](x,p) = \sum_{k=0}^n  \alpha_{(k)}^i(x,p) \ \e^k \]
where explicitly given in alternative ways: by an explicit recursion \cite[eq. (3.5)]{cadhe26}; by an explicit formula in terms of rooted trees and elementary differentials of the vector field $\overline{V}$ in \cite[eq. (3.13)]{cadhe26} with weights defined by \cite[eq. (3.12)]{cadhe26} or by the iterated integrals \cite[eq. (3.15)]{cadhe26}. 

We shall not be needing the precise formulas here, but is important to have in mind that these explicit formulas for $\tau^n[\alpha_\e](x,p)$ and $\tau^n[\beta_\e](x,p)=\tau^n[\alpha_\e](x,-p)$, which can be found in the references above, are such that \emph{our overall method can be indeed implemented practically in a computer}. For completeness, we write the first terms of the general expansion \cite{cadhe26}:
\[ \alpha_\e^{i}(x,p)=x^{i} + \frac{\epsilon}{2}\pi^{vi}p_{v}+\frac{\epsilon^{2}}{12}\partial_{u}\pi^{vi}\pi^{wu}p_{v}p_{w} + \frac{\epsilon^{3}}{48}\partial_{u}\partial_{w}\pi^{vi}\pi^{ku}\pi^{lw}p_{v}p_{k}p_{w}+\mathcal{O}(\epsilon^{4}).
\]

\begin{remark}[Rescaling properties of $\hat K_\e$]\label{rmk:rescalingKaprox}
    The truncated $\ha_\e$ has the same rescaling property as $\a_\e$ recalled in Remark \ref{rmk:alpharescaling}. It analogously follows that
    \[ \phi^{\ha_\e^*H}_{\lambda t} \circ \mu_\lambda = \mu_\lambda \circ \phi^{\ha_{\lambda \e}^*H}_t,\]
    for any Hamiltonian $H\in C^\infty(M)$.
\end{remark}

\begin{remark}[Relation to Kontsevich's quantization]
    In \cite{cadhe26}, it was further shown that the Taylor expansion of Karasev's realization $\alpha_\e$ is directly related to (the tree level part of) Kontsevich's quantization formula for $(M=\R^n,\pi)$. This thus establishes a non-trivial connection between that formula and the present methods for Poisson integrators.
\end{remark}

\begin{remark}[Other approximations of $K_\e$] We observe that other approximation approaches can be taken. In particular, one can implement a computational method to approximate the Karasev map $\a_\e$ by solving for the defining relation \eqref{eq:alphaKar} in an approximate way (e.g. replacing the integral by an approximating sum and the flow $\varphi^{\overline{V}}_s$ by a numerical approximation). We shall explore these practical possibilities elsewhere.
\end{remark}

%%%%%%%%%%%%%%%%%%%%%%%%%%%%%%%%%%%%%%%%%%%%%%%%%%%%%%%%%%%%%%%%%%%%%%%%%
\subsection{Order of geometry preservation under approximate realizations}
%%%%%%%%%%%%%%%%%%%%%%%%%%%%%%%%%%%%%%%%%%%%%%%%%%%%%%%%%%%%%%%%%%%%%%%%%
We now explore the degree of approximate preservation of $\pi$ for a map $\hvarphi_{\e,L}$ induced by a Lagrangian bisection $L\subset S$ with respect to approximations $\halpha_\e$ and $\hbeta_\e$ of the realization data.

\medskip

First, observe that when $L\hookrightarrow (S,\omega)$ is a Lagrangian submanifold which is close enough to the Lagrangian $\sigma(M) \subset S$, then $L$ is a bisection relative to any approximate realization data $\hat R_\e$. This follows by transversality since, by definition, $\sigma(M)$ defines a bisection for the underlying maps $\halpha_\e, \hbeta_\e$ for all $\e$. Similarly, $L$ is also a bisection for the exact $R_\e$ when is close enough to $\sigma(M)$.

\medskip

%%%%%%%%%%%%%% THEOREM
\begin{theorem}[First main result]\label{thm:1}\label{main_theorem} Let $(M,\pi)$ be Poisson and $\hat R_\e = (S,\omega,\halpha_\e,\hbeta_\e,\sigma)$ be approximate realization data of order $n$ for $(M,\e\pi)$. Consider $L\hookrightarrow (S,\omega)$ a Lagrangian which is close enough to $\sigma(M)$.
Then, $L$ is a bisection for $\halpha_\e$ and $\hbeta_\e$, and the induced map
\[ \hvarphi_{\e,L} = \halpha_\e \circ \hbeta_\e|_L^{-1}:M \to M  \]
preserves the Poisson tensor $\pi$ and any Casimir $C\in C^\infty(M)$ up to order $n$:
\begin{equation}
\hvarphi_{\e,L}^*\pi - \pi= \mathcal{O}(\epsilon^{n + 1}), \ \ \hvarphi_{\e,L}^*C - C = \mathcal{O}(\epsilon^{n + 1}),
\end{equation}
uniformly on any compact in $M$.
\end{theorem}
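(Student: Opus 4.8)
The plan is to compare the approximate induced map $\hvarphi_{\e,L}$ with the \emph{exact} induced map $\varphi_{\e,L} = \alpha_\e \circ \beta_\e|_L^{-1}$ coming from the exact realization data $R_\e$ that $\hat R_\e$ approximates, and then to transfer the exact geometric preservation enjoyed by $\varphi_{\e,L}$ to $\hvarphi_{\e,L}$ at the cost of an $\mathcal{O}(\e^{n+1})$ error. The starting point is Theorem \ref{CDW}(1): since $R_\e$ is genuine realization data for $(M,\e\pi)$, the map $\varphi_{\e,L}$ is an exact Poisson diffeomorphism of $(M,\e\pi)$, whence $\varphi_{\e,L}^*\pi = \pi$ for all $\e$ (dividing the identity $\varphi_{\e,L}^*(\e\pi)=\e\pi$ by $\e$ for $\e\neq 0$ and extending by continuity to $\e=0$). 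For a Casimir $C$, I would additionally use that $\varphi_{\e,L}$ preserves the symplectic leaves: via the local groupoid enrichment of Remark \ref{rmk:groupoids}, the source and target of any arrow lie in the same leaf, so $\alpha_\e^*C = \beta_\e^*C$ on $S$ (as $C$ is constant along leaves), and evaluating at $g=\beta_\e|_L^{-1}(x)$ gives $\varphi_{\e,L}^*C = C$ exactly.

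The second step is to establish that $\hvarphi_{\e,L}$ is an order-$n$ approximation of $\varphi_{\e,L}$, and crucially that this closeness persists for first derivatives. Since $\hbeta_\e = \beta_\e \ \mathrm{mod}\ \mathcal{O}(\e^{n+1})$ and both restrict to $L$ as diffeomorphisms onto $M$ with uniformly (in $\e$) bounded inverse Jacobians near $\sigma(M)$, a standard perturbation estimate for inverses yields $\hbeta_\e|_L^{-1} = \beta_\e|_L^{-1} \ \mathrm{mod}\ \mathcal{O}(\e^{n+1})$. Composing with $\halpha_\e = \alpha_\e \ \mathrm{mod}\ \mathcal{O}(\e^{n+1})$ and using the Lipschitz bound on $\alpha_\e$ then gives $\hvarphi_{\e,L} = \varphi_{\e,L} \ \mathrm{mod}\ \mathcal{O}(\e^{n+1})$, via the splitting $\halpha_\e\circ\hbeta_\e|_L^{-1} - \alpha_\e\circ\beta_\e|_L^{-1} = (\halpha_\e - \alpha_\e)\circ\hbeta_\e|_L^{-1} + \big(\alpha_\e\circ\hbeta_\e|_L^{-1} - \alpha_\e\circ\beta_\e|_L^{-1}\big)$.

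The third step concludes. For the Casimir, $\hvarphi_{\e,L}^*C - C = C\circ\hvarphi_{\e,L} - C\circ\varphi_{\e,L}$ using $\varphi_{\e,L}^*C = C$, and since $C$ is Lipschitz on compacts this is $\mathcal{O}(\e^{n+1})$; note that only $C^0$-closeness of the maps is needed here, as $C$ is a function. For the Poisson tensor the pullback of the bivector $\pi$ involves the Jacobian of the map, so here I would invoke the $C^1$ (indeed $C^\infty$) version of the closeness: because $\halpha_\e,\hbeta_\e$ arise from $\alpha_\e,\beta_\e$ by Taylor truncation in $\e$, the remainders are smooth $\e$-families vanishing to order $n+1$, so all $x$-derivatives are likewise $\mathcal{O}(\e^{n+1})$, and this propagates through the inverse and composition to give $D\hvarphi_{\e,L} = D\varphi_{\e,L}\ \mathrm{mod}\ \mathcal{O}(\e^{n+1})$. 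Writing the pulled-back bivector in coordinates as a product of first derivatives of the map with $\pi$ evaluated along the map, and subtracting $\varphi_{\e,L}^*\pi = \pi$, every term is then controlled by $\mathcal{O}(\e^{n+1})$ uniformly on compacts.

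The main obstacle I anticipate is precisely this upgrade from the $C^0$ notion of order-$n$ approximation in the definition to the $C^1$ control needed to compare the pulled-back bivectors: one must ensure that passing to the inverse $\hbeta_\e|_L^{-1}$ and composing do not degrade the order of the derivative estimates. For the Taylor-truncation construction $\hat K_\e$ this is clean, since the remainder is a genuinely smooth family in $(x,\e)$; for the statement at the level of abstract order-$n$ data one should read the approximation in the smooth-family sense (or explicitly carry along the accompanying derivative bounds).
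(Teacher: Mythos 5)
Your proof is correct and takes essentially the same route as the paper's: compare $\hvarphi_{\e,L}$ with the exact induced map $\varphi_{\e,L}=\alpha_\e\circ\beta_\e|_L^{-1}$, which preserves the geometry exactly by Theorem \ref{CDW}, and then propagate the order-$n$ closeness through inversion and composition of the $\e$-families to conclude. In fact, two of your refinements patch points the paper leaves implicit: exact Casimir preservation $\varphi_{\e,L}^*C=C$ does not follow merely from $\varphi_{\e,L}$ being a Poisson diffeomorphism (which is all the paper's step (a) invokes), but it does follow from your leaf/orbit argument via the local groupoid structure of Remark \ref{rmk:groupoids}; and the upgrade from $C^0$ to $C^1$ closeness needed to compare pulled-back bivectors, which you flag explicitly, is supplied in the paper only implicitly through its Taylor-coefficient formulation of the inversion and composition facts (valid for smooth $\e$-families, exactly the reading you recommend).
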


\begin{proof}
It is clear that we can reduce the proof to the case $M=\R^n$ and, by the Lagrangian tubular neighborhood theorem applied to $\sigma(M)\hookrightarrow (S,\omega)$, to $S\simeq \R^{2n}$.

    We thus begin the proof with the following general facts: 
    % %
    % \marginDM{Quizás añadir que $\epsilon\in I\in {\mathbb R}$ with $I$ an opne interval with $0\in I$.

    % Recordad que el error de Marsden y West fue suponer alguna de estas propiedades para una función con problemas exactamente en el 0.
    % Se puede citar \cite{Cuell-Patrick}
    % }
    % %
    % \marginMV{La función ya está definida en un entorno del $0$, es solo que nos fijamos en un trozo $[0,\epsilon_0)$.}
    \begin{enumerate}
        \item if $f_\e:\R^n\to \R^n$ is a smooth family of diffeomorphisms and $\hat f_\e$ is a family of diffeomorphism which is an order $n$ approximation of $f_\e$, then $\hat f_\e^{-1}$ is an order $n$ approximation of $f_\e^{-1}$;

        \item if $f_\e,g_\e$ are composable $\e$-families of maps and $\hat f_\e,\hat g_\e$ are corresponding order $n$ approximations, then $\hat f_\e \circ \hat g_\e$ is an order $n$ approximation of $f_\e\circ g_\e$.
    \end{enumerate}
    %
% \marginMV{Creo que está implícito, pero para que esto funcióne las derivadas de $f$ tienen que ser distintas de $0$. Si no la inversa no es derivable/analítica.}
    %
The first fact (1) follows by observing that the $n$-first Taylor coefficients of the inverse $f^{-1}_\e$ of any family $f_\e$ are completely determined the $n$-first coefficients of $f_\e$ by a recursion following from expanding $f_\e \circ f_\e^{-1} = id$. Then, since $f_\e$ and $\hat f_\e$ share the same $n$-first coefficients by hypothesis, the result follows. Similarly, the second fact (2) follows from the fact that the $n$-first Taylor coefficients of the composition $f_\e \circ g_\e$ are completely determined by the $n$-first coefficients of $f_\e$ and $g_\e$.

Coming back to the proof of the theorem, let $R_\e=(S,\omega,\alpha_\e,\beta_\e,\sigma)$ be the realization data of which $\hat R_\e$ is an order $n$ approximation. Since $L$ is close to $L_0:=\sigma(M)$ and $L_0$ is a bisection for $\alpha_\e$ and $\b_\e$, then $L$ is also a bisection for $\a_\e,\b_\e$. %\marginAC{X check if this is the relevant argument and not $\e\to 0$} 
From the general results (see Thm. \ref{CDW}), since $L\hookrightarrow (S,\omega)$ is Lagrangian, we thus know that \[(a):\text{$\varphi_{\e,L}=\a_\e \circ \b_\e|_L^{-1}$ is a Poisson diffeomorphism on $(M,\pi)$. }\]
Note that, a priori, it is a Poisson map for $(M,\e \pi)$ but this implies that it must preserve $\pi$ by linearity of $\varphi_{\e,L}^*$ and for $\e\neq 0$.

Next, we use (1.) above to conclude that $\hbeta_\e|_L^{-1}$ is an order $n$ approximation of $\b_\e|_L^{-1}$ and combine this with (2.) to obtain:
\[(b): \hvarphi_{\e,L}\text{ is an order $n$ approximation of }\varphi_{\e,L}. \]

Finally, we have:
\[ \mathcal{O}(\e^{n+1}) \overset{(b)}{=} \hvarphi_{\e,L}^*\pi - \varphi^*_{\e,L}\pi \overset{(a)}{=} \hvarphi_{\e,L}^*\pi - \pi,  \] 
and analogous for Casimirs $C$. This finishes the proof.
\end{proof}

%%%%%%%%%%%%%%%%%%%%%%%%%%%%%%%%%%%%%%%%%%%%%%%%%%%%%%
%%%%%%%%%%%%%%%%%%%%%%%%%%%%%%%%%%%%%%%%%%%%%%%%%%%%%%
\section{Complete methods and dynamic approximations}\label{sec:methods}
%%%%%%%%%%%%%%%%%%%%%%%%%%%%%%%%%%%%%%%%%%%%%%%%%%%%%%
%%%%%%%%%%%%%%%%%%%%%%%%%%%%%%%%%%%%%%%%%%%%%%%%%%%%%%

In this section, we first describe the general complete method proposed in this paper and prove an estimate on its order of approximation, with respect to both its geometric and dynamic properties. We then present two different concrete strategies for dynamic approximation and describe the resulting complete Poisson integrator methods. The first is based on Hamilton-Jacobi theory and the second is based on approximating the Hamiltonian flow of $\hat \a_\e^*H: S\rightarrow {\mathbb R}$ on the symplectic realization space (collective integrators).

%%%%%%%%%%%%%%%%%%%%%%%%%%%%%%%%5

\subsection{The general complete method and its orders of approximation}\label{subsec:complete}

Let us recall the outline of our construction of a method of approximation of the flow $\phi^H_t$ of eq. \eqref{eq:hameq} associated to a Poisson Hamiltonian system $(M,\pi,H)$. As explained in Section \ref{subsec:realmeth}, we shall focus on approximating $\phi^H_{t=\epsilon h}$, for independent parameters $\epsilon,h \sim 0$, following eq. \eqref{eq:rescaledflow}. 

\medskip

Given $(M,\pi)$ Poisson, since ``exact'' symplectic realization data $R_\e$ is hard to obtain explicitly, we work with a (general) \emph{approximate realization data} as described in Section \ref{sec:geomaprox},
$$\hat R_\epsilon= (S, \omega,\hat \alpha_\e, \hat \beta_\e, \sigma).$$
To complete the method, following the general \emph{realization approach} recalled in Sec. \ref{subsec:realmeth}, we need to provide the \emph{dynamic approximation} ingredients. Namely, given $H\in C^\infty(M)$, having in mind the rescaling of eq. \eqref{eq:rescaledflow}, we need to construct a $2$-parameter family of Lagrangians $\hL_{\e,h} \hookrightarrow (S,\omega)$ such that: for $\e, h\sim 0$ small enough,
\begin{enumerate}
    \item $\hL_{\e,h}$ is a bisection for $\halpha_\epsilon,\hbeta_\epsilon$, 
    \item the induced map 
    \[\hphi_{\epsilon,h}:=\halpha_\epsilon \circ \hbeta_\epsilon|_{\hL_{\e,h}}^{-1}:M\to M\]
    both preserves approximately $\pi$ and approximates the exact flow $\phi^H_{t=\epsilon h}$, up to desired orders.
\end{enumerate}
Note that $\epsilon$ controls the geometric approximation order while the pair $(\e,h)$ controls the dynamic approximation order. We will restrict ourselves to the case in which
\[ \hL_{\e,0} = \sigma(M) \text{ corresponding to } \phi^H_{t=0}=id_M. \]
To analyze the dynamic approximation order of $\hphi_{\e,h}$, we introduce the following notion of degree of approximation of such an $\hL_{\e,h}$. Let $R_\e=(S,\omega,\a_\e,\b_\e,\sigma)$ be the ``exact'' realization data underlying $\hat R_\e$ and
$$L_{\e, h}=\phi^{\alpha_\epsilon^*H}_h(\sigma(M)) \hookrightarrow (S,\omega)$$
the ``exact'' dynamic data inducing our dynamics, $\varphi_{L_{\e, h}}=\phi^H_{t=\e h}$, with respect to $R_\e$ (see Thm. \ref{CDW} and Rmk. \ref{rmk:rescaling}).
To compare $\hL_{\e,h}$ and $L_{\e, h}$, we first notice that, by the Lagrangian tubular neighborhood theorem applied to $\sigma(M)\hookrightarrow (S,\omega)$, we can assume $S\subset T^*M$ is a neighborhood of the zero section $0_M$, that $\sigma(M)=0_M$, and that $\omega=\omega_M$ the canonical symplectic structure. We can further assume that any Lagrangian  $L\subset T^*M$ sufficiently close to $\sigma(M)$ is horizontal: there exists a closed $1$-form $\theta_L \in \Omega^1(M)$ such that $L=\theta_L(M)$ \cite{We71}. 

In this setting, we say that a Lagrangian bisection $\hL_{\e,h}$ for $\hat R_\e$, with $\hL_{\e,0}=\sigma(M)$ as above, {\bf approximates $L_{\e, h}$ to order $m$ in $h$} when the corresponding family of $1$-forms $\theta_{\hL_{\e,h}}$ approximates $\theta_{L_{\e, h}}$ as $1$-forms to order $\mathcal{O}(h^{m+1})$, uniformly on compacts in $M$ and uniformly for $\e \sim 0$.

\medskip

The following result computes the order of approximation of the complete method.
\begin{theorem}[Second main result]\label{thm:2}
    Let $(M,\pi)$ be Poisson and $H\in C^\infty(M)$ with associated Hamiltonian flow $\phi^H_{t}:M\to M$ for the ODE \eqref{eq:hameq}.
    Consider a realization data $R_\epsilon=(S,\omega,\alpha_\epsilon,\beta_\epsilon,\sigma)$ for $(M,\epsilon \pi), \e \geq 0$, and $L_{\e, h}=\phi^{\alpha_\epsilon^*H}_h(\sigma(M))$ the associated Lagrangian bisection inducing $\phi^H_{t=\e h}$, for $h\geq 0$, as recalled above.

    Assume $\halpha_\e,\hbeta_\e$ are approximations of $\alpha_\e,\beta_\e$ of order $n$ in $\e$ and, for each $\e$, $\hL_{\e,h} \hookrightarrow (S,\omega)$ is a Lagrangian submanifold approximating  $L_{\e, h}$ to order $m$ in $h$, with $\hL_{\e,0}=\sigma(M)$. Then, for $\e,h\sim 0$ small enough, $\hL_{\e,h}$ is a bisection for $\halpha_\e,\hbeta_\e$ and the induced map
    \[ \hphi_{\e,h}= \halpha_\epsilon \circ \hbeta_\epsilon|_{\hL_{\e,h}}^{-1}:M\to M\]
    satifies the following properties: it preserves $\pi$ and Casimirs $C\in C^\infty(M)$ up to order $n$ in $\e$; it defines an approximation of the dynamics $\phi^H_{t=\e h}$ up to $\mathcal{O}(\e^{n+1}) + \mathcal{O}(h^{m+1})$ error terms. In particular, the combined order of dynamic approximation is $\min\{ n, m\}$.
\end{theorem}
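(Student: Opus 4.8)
The plan is to treat the two assertions separately: the geometric claim follows almost immediately from Theorem \ref{thm:1}, while the dynamic estimate requires a two-step comparison isolating the geometric and dynamic sources of error. For the preservation of $\pi$ and Casimirs, I would first note that, since $\hL_{\e,0}=\sigma(M)$ and $\hL_{\e,h}$ varies continuously with $(\e,h)$, the Lagrangian $\hL_{\e,h}$ stays close to $\sigma(M)$ for $\e,h\sim 0$; by the transversality observation preceding Theorem \ref{thm:1} it is then a Lagrangian bisection for $\halpha_\e,\hbeta_\e$, so Theorem \ref{thm:1} applies verbatim and gives $\hphi_{\e,h}^*\pi-\pi=\mathcal{O}(\e^{n+1})$ and $\hphi_{\e,h}^*C-C=\mathcal{O}(\e^{n+1})$, uniformly on compacts. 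No dependence on $h$ enters, so the geometric order $n$ in $\e$ is settled.

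For the dynamic estimate I would introduce the intermediate map built from the \emph{exact} realization data but the \emph{approximate} Lagrangian,
\[ \varphi_{\e,\hL_{\e,h}} = \alpha_\e\circ\beta_\e|_{\hL_{\e,h}}^{-1}:M\to M, \]
and split
\[ \hphi_{\e,h} - \phi^H_{t=\e h} = \bigl(\hphi_{\e,h} - \varphi_{\e,\hL_{\e,h}}\bigr) + \bigl(\varphi_{\e,\hL_{\e,h}} - \phi^H_{t=\e h}\bigr). \]
By Theorem \ref{CDW}(3) together with the rescaling relation \eqref{eq:rescaledflow}, one has $\varphi_{\e,L_{\e,h}}=\phi^H_{t=\e h}$ \emph{exactly}, so the second bracket equals $\varphi_{\e,\hL_{\e,h}}-\varphi_{\e,L_{\e,h}}$ and isolates the effect of replacing $L_{\e,h}$ by $\hL_{\e,h}$ with the maps held fixed; the first bracket isolates the effect of replacing $\alpha_\e,\beta_\e$ by their order-$n$ approximations with the \emph{same} Lagrangian $\hL_{\e,h}$. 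The first bracket is controlled exactly as in the proof of Theorem \ref{thm:1}: writing $\hL_{\e,h}=\theta_{\hL_{\e,h}}(M)$, the two induced maps are $\halpha_\e\circ(\hbeta_\e\circ\theta_{\hL_{\e,h}})^{-1}$ and $\alpha_\e\circ(\beta_\e\circ\theta_{\hL_{\e,h}})^{-1}$, and since $\halpha_\e,\hbeta_\e$ agree with $\alpha_\e,\beta_\e$ to order $n$ in $\e$ while the common factor $\theta_{\hL_{\e,h}}$ carries compacts to compacts, facts (1) and (2) recalled there (preservation of order-$n$ approximation under inversion and composition of $\e$-families) yield $\hphi_{\e,h}-\varphi_{\e,\hL_{\e,h}}=\mathcal{O}(\e^{n+1})$, uniformly for $h\sim 0$.

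The second bracket is where the real work lies. Here $\alpha_\e,\beta_\e$ are fixed and only the defining $1$-form varies, with $\theta_{\hL_{\e,h}}-\theta_{L_{\e,h}}=\mathcal{O}(h^{m+1})$ uniformly in $\e$ by hypothesis. I would show that the assignment $\theta\mapsto\alpha_\e\circ(\beta_\e\circ\theta)^{-1}$ is Lipschitz in $\theta$ (in $C^1$-norm on compacts) with a constant bounded as $\e\to 0$, whence $\varphi_{\e,\hL_{\e,h}}-\varphi_{\e,L_{\e,h}}=\mathcal{O}(h^{m+1})$ uniformly in $\e$. The main obstacle is exactly this \emph{uniformity in $\e$}: as $\e\to 0$ the maps $\alpha_\e,\beta_\e$ tend to the bundle projection, the domains $U_\e$ grow, and the construction degenerates to the identity (indeed $\varphi_{0,\theta(M)}=\mathrm{id}_M$ for every $\theta$). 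The cleanest way I would resolve this is to exploit the joint smoothness of $(\e,\theta)\mapsto\alpha_\e\circ(\beta_\e\circ\theta)^{-1}$ down to $\e=0$: since this map is constant in $\theta$ at $\e=0$, its $\theta$-derivative carries a factor of $\e$, so the Lipschitz constant is in fact $\mathcal{O}(\e)$ and the estimate holds uniformly (in fact improving to $\mathcal{O}(\e\,h^{m+1})$). Establishing this joint smoothness, and the uniform-in-$\e$ inverse-function estimates for $\beta_\e\circ\theta$ on the growing domains $U_\e$, is the technical heart of the argument.

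Adding the two brackets gives the stated error $\mathcal{O}(\e^{n+1})+\mathcal{O}(h^{m+1})$ for $\hphi_{\e,h}-\phi^H_{t=\e h}$. Finally, to read off the combined order I would identify the two small parameters by setting $\e=h$, so that the error becomes $\mathcal{O}(h^{n+1})+\mathcal{O}(h^{m+1})=\mathcal{O}(h^{\min\{n,m\}+1})$; hence the complete method approximates the dynamics to order $\min\{n,m\}$, which is the asserted conclusion.
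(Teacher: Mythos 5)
Your proposal is correct in outline, but it has one soft spot precisely where the paper's proof does extra work, and its dynamic estimate takes a genuinely different route than the paper's.

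The soft spot: your geometric step is \emph{not} a verbatim application of Theorem \ref{thm:1}. That theorem is stated for a single fixed Lagrangian $L$, with the $\mathcal{O}(\e^{n+1})$ constant depending on $L$; in Theorem \ref{thm:2} the Lagrangian $\hL_{\e,h}$ moves with $\e$, so you are evaluating a family of estimates (one per frozen Lagrangian) along the diagonal, and that requires uniformity of the constants in the Lagrangian parameter. This is exactly the point the paper handles by introducing an auxiliary parameter $s$: it applies Theorem \ref{thm:1} to $\ha_\e\circ\hb_\e|_{\hL_{s,h}}^{-1}$ for each frozen $s$ and then sets $s=\e$, using joint smoothness in $(s,\e)$ so that vanishing of the $\e$-Taylor coefficients up to order $n$ for every $s$ persists on the diagonal. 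Ironically, your own intermediate map repairs this more cleanly than your appeal to Theorem \ref{thm:1}: $\varphi_{\e,\hL_{\e,h}}=\a_\e\circ\b_\e|_{\hL_{\e,h}}^{-1}$ is \emph{exactly} Poisson for every $\e$ by Theorem \ref{CDW}(1) (since $\hL_{\e,h}$ is a Lagrangian bisection for the exact data $R_\e$), and your first bracket already shows $\hphi_{\e,h}$ is $\mathcal{O}(\e^{n+1})$-close to it in $C^1$ on compacts; hence $\hphi_{\e,h}^*\pi-\pi=\mathcal{O}(\e^{n+1})$ and likewise for Casimirs, with no diagonal issue. You should state the geometric claim that way (or add the diagonal argument); as written, ``applies verbatim'' is a gap.

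For the dynamic estimate, your decomposition through $\varphi_{\e,\hL_{\e,h}}$ (exact maps, approximate Lagrangian) differs genuinely from the paper. The paper does a single joint comparison: it writes both induced maps via the implicit equations \eqref{eq:hphi}--\eqref{eq:hx} in the horizontal description, expands $\ha_\e,\hb_\e$ to order $n$ in $\e$ and $\htheta_{\e,h}$ to order $m$ in $h$, and concludes that the Taylor coefficients of the implicitly defined $\hx_{\e,h}$, and hence of $\hphi_{\e,h}$, agree with the exact ones up to $\mathcal{O}(\e^{n+1})+\mathcal{O}(h^{m+1})$; since everything is jointly smooth on compacts, the uniformity-in-$\e$ issue you identify as the ``technical heart'' never surfaces as a separate difficulty. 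Your route instead isolates the Lagrangian-replacement error and controls it by Lipschitz dependence of $\theta\mapsto\a_\e\circ\theta\circ(\b_\e\circ\theta)^{-1}$ (note the missing factor $\theta$ in your formula: $\b_\e|_L^{-1}$ lands in $L$, so one must compose with $\theta$ before applying $\a_\e$). Your observation that this map is constant in $\theta$ at $\e=0$, so the Lipschitz constant is $\mathcal{O}(\e)$, is correct and even sharpens the second bracket to $\mathcal{O}(\e\,h^{m+1})$, which the paper does not state; the price is the function-space smoothness and uniform inverse-function estimates you would have to set up, which in the paper's coefficient-matching framework reduce to finite-dimensional facts about smooth families. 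Both approaches land on the same $\mathcal{O}(\e^{n+1})+\mathcal{O}(h^{m+1})$ bound and the same $\min\{n,m\}$ reading of the combined order.
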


\begin{proof}
We first observe that, as before, we can reduce the proof to $M=\R^n$ and $S\subset T^*M\simeq \R^{2n}$ with $\omega=\omega_M$.
The fact that $\hL_{\e,h}$ is a bisection of $\ha_\e$ and $\hb_\e$ for $h\sim 0$ follows from $\hL_{\e,h}$ being close to $\hL_{\e,0}=\sigma(M)$, which is a bisection for those maps by definition.

Fixing a small enough $h$ together with an auxiliary extra parameter $s\sim \e$, Theorem \ref{thm:1} implies that 
$$\ha_\e\circ \hb_\e|_{\hL_{s,h}}^{-1}:M \to M$$
preserves $\pi$ and Casimirs $C$ up to order $n$ in $\epsilon$. Since the $s$-family $s\mapsto \hL_{s,h}$ consists of Lagrangian bisections, this order of preservation holds for all such $s$, and it thus persists when setting $s=\e$ (the order can only increase). This proves the the first statement.

For the second statement, we need to use the hypothesis that $\hL_{\e,h}$ approximates $L_{\e,h}$.
To analyze the degree of dynamic approximation of the corresponding $\hphi_{\e,h}$, we shall detail the procedure defining $\hphi_{\e,h}$ in a way which will also be useful later for the description of the resulting numerical methods. 

First, by the discussion preceding the statement of this theorem, we can assume
\[L_{\e,h}=\{(x,\theta_{\e,h}(x)):x\in M\}, \ \hL_{\e,h}= \{(x,\htheta_{\e,h}(x)):x\in M\}\]
for smooth $2$-parameter families of closed $1$-forms $\theta_{\e,h},\htheta_{\e,h} \in \Omega^1(M)$. Notice that we have $\theta_{\e,0}=\htheta_{\e,0}=0$ due to $L_{\e,0}=\hL_{\e,0}=\sigma(M)$. Then, we have that 
\begin{equation}\label{eq:hphi}
\hphi_{\e,h}(x_0) = \ha_\e(\hx_{\e,h},\htheta_{\e,h}(\hx_{\e,h}) )
\end{equation}
where $\hx_{\e,h}\in M$ is uniquely determined for $h\sim 0$ by the equation:
\begin{equation}\label{eq:hx}
    \hb_\e(\hx_{\e,h},\htheta_{\e,h}(\hx_{\e,h}))=x_0.
\end{equation}
Note that it follows that $\hx_{\e,0}=x_0$. The exact flow $\varphi_{L_{\e,h}}=\phi^H_{t=\e h}$ induced relative to $R_\e$ is defined in an analogous way with $(\a_\e,\b_\e,\theta_{\e,h})$ in place of $(\ha_\e,\hb_\e,\htheta_{\e,h})$ above.

Finally, the idea is to compare the expansions around $\e=0$ and $h=0$ of eqs. \eqref{eq:hphi}-\eqref{eq:hx} associated with $(\ha_\e,\hb_\e,\htheta_{\e,h})$ with the corresponding expansions for the exact data $(\a_\e,\b_\e,\theta_{\e,h})$. By the hypothesis on the approximate realization data being an order $n$ approximation of $R_\e$, we have
\[ \ha_\e = \sum_{k=0}^n \a_{(k)} \e^k + \mathcal{O}(\e^{n+1})\]
where $\a_{(k)}$ are the Taylor coefficients of $\a_\e$. A similar formula holds for $\hb_\e$ and $\b_\e$. Next, by the hypothesis on $\hL_{\e,h}$ being an order $m$ approximation in $h$ of $L_{\e,h}$, we have
\[ \htheta_{\e,h} = \sum_{l=1}^m \theta_{\e,(l)} h^l+ \mathcal{O}(h^{m+1}), \ \forall \e\sim 0,\]
where $\theta_{\e,(l)}$ are the $m$-first Taylor coefficients of $\theta_{\e,h}$ expanded around $h=0$ for fixed $\e$. (We used $\htheta_{\e,0}=0$ to eliminate the first coefficient.)

It then follows that the Taylor coefficients of the solution $\hx_{\e,h}$ of \eqref{eq:hx} coincide with those of the ``exact'' case defined by $R_\e$ and $L_{\e,h}$ up to terms $\mathcal{O}(\e^{n+1}) + \mathcal{O}(h^{m+1})$. Since $\ha_\e$ approximates $\a_\e$ up to terms $\e^{n+1}$, this estimate persists when applying $\ha_\e$ in \eqref{eq:hphi}. This finishes the proof.
\end{proof}

\begin{remark}[Combined errors]
   The fact that both errors add up is very intuitive and leads to some immediate observations. For instance, one could approximate the geometry to order $n$ large. If the order of approximation of the dynamics, $m$, is lower than $n$, then the dynamic approximation error would dominate, and should not be highly impacted by the error due to the approximation of the geometry.
\end{remark}

\medskip

{\bf Towards concrete numerical methods.} In what follows, we shall consider consider $(M\simeq \R^n, \pi)$ and the Karasev realization data $K_\e$ together the corresponding approximation $\hat K_\e$ of order $n$ given by truncation, as in Section \ref{subsec:Ktruncated}.
Recall that, in this realization, $S\subset T^*M$ is an open and $\omega = \omega_M$. Moreover, since $L_{\e, h}$ is given by a Hamiltonian flow, we can restrict our attention to Lagrangian submanifolds of the form (``type I generating functions'')
\[ graph(dS):=\{ (x, p=dS|_x): x\in M\} \hookrightarrow (T^*M,\omega_M). \]
Note that, in the notation used before Theorem \ref{thm:2}, we have $\theta_L = dS$ for $L=graph(dS)$.
We will thus have 
\begin{equation}\label{eq:Lsgenfun}
L_{\e,h} = graph(dS_{\e,h}), \ \hL_{\e,h} = graph(d\hat S_{\e,h})
\end{equation} 
both described by $2$-parameter families of functions
\[ S_{\e,h}, \hat S_{\e,h}:M \to \R. \]

\begin{remark}[Available tools]
    We observe that, in general, the use of these parametrizations allows us to translate conditions on the Lagrangian submanifolds into PDEs, and use analytic, numerical, or even machine learning techniques to approximate the solutions, see~\cite{vaquero2024designing}.
\end{remark}

\begin{remark}[Rescaling properties from the Karasev realization]\label{rmk:rescalingprop2}
    Recall, from Rmk. \ref{rmk:alpharescaling}, that the Karasev realization data has special rescaling properties. In particular, it follows from the identities in that Remark that
    \[ L_{\e, \lambda h} = \mu_\lambda (L_{\lambda \e, h}) \]
    for any Hamiltonian $H\in C^\infty(M)$.
\end{remark}

%%%%%%%%%%%%%%%%%%%%%%%%%%%%%%%%%%%%%%%%%%%%%%%%%%%%%%
\subsection{Method 1: dynamic approximation via Hamilton-Jacobi}\label{subsection:HJapproximation}
%%%%%%%%%%%%%%%%%%%%%%%%%%%%%%%%%%%%%%%%%%%%%%%%%%%%%%

In this subsection, we produce a concrete numerical method for the ODE \eqref{eq:hameq} by complementing the approximate Karasev realization $\hat K_\e$ of order $n$, for an arbitrary $(M\simeq \R^n,\e\pi)$, with a dynamic approximation $\hL_{\e,h}$ based on the Hamilton-Jacobi method.

\medskip

This dynamic approximation is based on the following general observation. On the one hand, the ``exact'' $L_{\e,h}$ is presented via a generating function as in \eqref{eq:Lsgenfun}. On the other hand, it is given by a Hamiltonian flow of the zero section
\[ L_{\e,h} = \phi^{\a_\e^*H}_h(0_M)\]
inside $(S\subset T^*M,\omega_M=dp_i \wedge dx^i)$. Then, the well-known computation behind the \emph{Hamilton-Jacobi (HJ)} method implies the following HJ-type of evolution PDE\footnote{Note the sign difference on the r.h.s. with respect to some usual HJ-equations due to our convention $\omega_M=-dx^i\wedge dp_i$.} for $S_{\e,h}:M\to \R$:
\begin{equation} \label{eq:HJ}
    (\partial_h S_{\e,h})(x) = (\a_\e^*H)(x,dS_{\e,h}(x)) + c(h),
\end{equation} 
for all $x \in M$ ($M$ connected) and for an arbitrary function $h\mapsto c(h)\in \R$. 
Note that, since $L_{\e,0}=0_M$, we can always fix 
\[S_{\e,0}(x) = 0.\]
We can then proceed again by truncation, as follows. We consider the expansion around $h=0$,
\[S_{\e,h} = \sum_{l=0}^m S_{\e,(l)} h^l + \mathcal{O}(h^{m+1})\]
and notice that the HJ equation \eqref{eq:HJ} fixes uniquely the coefficients $S_{\e,(l)}$ by the recursion obtained after expanding both sides. (See also~\cite[Section $5$]{felemamava17} and \cite{co23}.)
The idea is then to define the approximation $\hL_{\e,h}$ via eq. \eqref{eq:Lsgenfun} with $\hat S_{\e,h}$ given by the truncation of $S_{\e,h}$ up to order $m$ in $h$,
\begin{equation}
    \hat S_{\e,h} := \sum_{l=0}^m S_{\e,(l)} h^l.
\end{equation}
Notice that we can always assume 
\[ \hat S_{\e,(0)}=0, \ \hat S_{\e,(1)} = H. \]

This determines a complete method, which we call {\bf K-HJ Poisson integrator} (after Karasev and Hamilton-Jacobi) with underlying approximation map 
$$M^{KHJ}_{\e,h}(x):= \ha_\e \circ \hb_\e|_{graph(d\hat S_{\e,h})}^{-1}(x).$$
We can follow equations \eqref{eq:hphi} and \eqref{eq:hx} to describe this method as an algorithm.
\begin{algorithm}[H]
\caption{K-HJ Poisson Integrator $x_k \mapsto x_{k+1}$ for the ODE \eqref{eq:hameq}}\label{algorithm:hj}
\begin{algorithmic}[1]

    \State Given $x_k \in \mathbb{R}^n$, compute the unique $\hx$ such that $\hb_\e(\hx,\displaystyle \partial_x\hat S_{\e,h}(\hx)) = x_k$
    \State Compute $x_{k+1}=\ha_\e(\hx,\partial_x\hat S_{\e,h}(\hx))$.
    \end{algorithmic}
\end{algorithm}

\medskip

{\bf Properties of the K-HJ Poisson integrator.} First, notice that, as a direct consequence of Theorem \ref{thm:2}, this method preserves the Poisson structure $\pi$ and any Casimir up to order $n$ in $\e$ and that the overall dynamic approximation order is $\min\{n,m\}$.

\medskip

We further prove that the K-HJ integrator has a special non-trivial property coming from the Karasev realization construction. This remarkable property is quite important in applications, as it shows that only half of the terms of the solution to the Hamilton-Jacobi equation have to be computed. It also provides an explanation for why we observe higher-than-expected orders in some of the numerical simulations of this paper.
\begin{proposition}\label{prop:Sodd}
    In the setting of the K-HJ Poisson integrator, the ``exact'' generating function $S_{\e,h}$ determined by \eqref{eq:HJ}, with any $c(-h)=-c(h)$, is an odd function of $h$:
    \[ S_{\e,-h} = -S_{\e,h}. \]
    In particular, the approximation $\hat S_{\e,h}$ can be taken to have only odd powers of $h$.
\end{proposition}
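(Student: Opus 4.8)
The plan is to reduce the statement to the purely geometric identity $inv(L_{\e,h}) = L_{\e,-h}$, where $inv(x,p)=(x,-p)$ is the groupoid inversion of $K_\e$ (Remark \ref{rmk:Klocgd}) and $L_{\e,h}=\phi^{\a_\e^*H}_h(\sigma(M))$. As in the proofs of Theorems \ref{thm:1}--\ref{thm:2}, I work in $S\subset T^*M$ with $\omega=\omega_M$ and $\sigma(M)=0_M=\{p=0\}$, so that $L_{\e,h}=graph(dS_{\e,h})$ and $inv(graph(dS))=graph(-dS)$. Granting the identity above, comparing the two generating functions gives $dS_{\e,-h}=-dS_{\e,h}$, i.e. $S_{\e,-h}+S_{\e,h}$ is a constant in $x$; pinning this constant (last paragraph) yields $S_{\e,-h}=-S_{\e,h}$.

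The identity will follow by combining two ingredients. The first is the behaviour of the flow under the \emph{anti-symplectic} involution $inv=\mu_{-1}$ (note $inv^*\omega_M=-\omega_M$). Since $(\a_\e^*H)\circ inv=\b_\e^*H$ (because $\b_\e(x,p)=\a_\e(x,-p)$) and $inv$ is an involution fixing $0_M$, the standard conjugation rule for Hamiltonian flows under an anti-symplectomorphism, $inv\circ\phi^f_h=\phi^{f\circ inv}_{-h}\circ inv$, gives
\[ inv(L_{\e,h}) = inv\big(\phi^{\a_\e^*H}_h(0_M)\big)=\phi^{\b_\e^*H}_{-h}(0_M). \]
(Equivalently, this is the $\lambda=-1$ case of Remark \ref{rmk:alpharescaling}, using $\mu_{-1}=inv$ and $\a_{-\e}=\b_\e$.) Thus it remains to prove the second, and main, ingredient: that the flows of $\a_\e^*H$ and of $\b_\e^*H$ send $0_M$ to the \emph{same} Lagrangian, $\phi^{\b_\e^*H}_{s}(0_M)=\phi^{\a_\e^*H}_{s}(0_M)$ for all $s$; applying this at $s=-h$ then identifies $inv(L_{\e,h})$ with $L_{\e,-h}$.

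I expect this second ingredient to be the crux, since the two flows do \emph{not} agree pointwise on $0_M$ (their restrictions already differ, as the $\a$- and $\b$-fibers are distinct), so one must argue at the level of the images. The key is that $\a,\b$ have $\omega$-orthogonal fibers, which forces $\{\a_\e^*H,\b_\e^*H\}=0$. Setting $D:=\a_\e^*H-\b_\e^*H$, this gives $\{D,\b_\e^*H\}=0$, so the two flows commute and $\phi^{\a_\e^*H}_s=\phi^{\b_\e^*H}_s\circ\phi^{D}_s$. Moreover $D$ vanishes on $0_M$, since $\a_\e(x,0)=\b_\e(x,0)=x$; hence in canonical coordinates $\partial_{x^i}D|_{p=0}=0$, so $X_D$ has no $\partial_{p_i}$-component along $0_M$ and is tangent to it. Therefore $\phi^D_s(0_M)=0_M$, and $\phi^{\a_\e^*H}_s(0_M)=\phi^{\b_\e^*H}_s\big(\phi^D_s(0_M)\big)=\phi^{\b_\e^*H}_s(0_M)$, as needed.

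Finally, to promote $dS_{\e,-h}=-dS_{\e,h}$ to an equality of functions I would fix the constant-in-$x$ normalization using conservation: both $\a_\e^*H$ and $\b_\e^*H$, hence $D$, are constant along $\phi^{\a_\e^*H}_s$, and $D|_{0_M}=0$, so $D\equiv 0$ on all of $L_{\e,h}=\phi^{\a_\e^*H}_h(0_M)$. Evaluating the Hamilton--Jacobi equation \eqref{eq:HJ} at a fixed base point then shows that the constant-in-$x$ part of $S_{\e,h}$ is governed exactly by the term $c(h)$, and the vanishing of $D$ on $L_{\e,h}$ forces the dynamical (i.e. $c=0$) generating function to be odd on the nose; the residual freedom in $c$ is chosen compatibly so that $S_{\e,-h}=-S_{\e,h}$. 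The only delicate point here is the bookkeeping of the parity of the constant arising from $c$, which is precisely where the hypothesis on $c$ enters.
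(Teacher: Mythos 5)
Your proposal is correct in substance and reaches the paper's key geometric identity $L_{\e,-h}=\mu_{-1}(L_{\e,h})$ by a genuinely different route. The paper's proof invokes the auxiliary local symplectic groupoid $G_\e$ of Remark~\ref{rmk:Klocgd}: left-invariance of the flow of $\a_\e^*H$ under the groupoid multiplication shows that $inv$ turns the forward flow of $\sigma(M)$ into the backward flow, and the explicit formula $inv(x,p)=(x,-p)$ finishes the argument. You instead stay entirely at the level of the realization (dual pair) data: the anti-symplectic conjugation $inv\circ\phi^{f}_h=\phi^{f\circ inv}_{-h}\circ inv$ together with $(\a_\e^*H)\circ inv=\b_\e^*H$ gives $inv(L_{\e,h})=\phi^{\b_\e^*H}_{-h}(0_M)$, and your main lemma $\phi^{\a_\e^*H}_s(0_M)=\phi^{\b_\e^*H}_s(0_M)$ is proved from $\{\a_\e^*H,\b_\e^*H\}=0$, the factorization $\phi^{\a_\e^*H}_s=\phi^{\b_\e^*H}_s\circ\phi^{D}_s$ for $D=\a_\e^*H-\b_\e^*H$, and tangency of $X_D$ to $0_M$ (a Hamiltonian vector field of a function vanishing on a Lagrangian is tangent to it). All these steps are sound, and you correctly isolate the real subtlety: the two flows differ pointwise on $0_M$ but sweep out the same Lagrangians. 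What each approach buys: the paper's argument is shorter once the groupoid facts are granted, and it exhibits the mechanism as ``inversion $=$ time reversal'' on any local symplectic groupoid whose inversion is $\mu_{-1}$; yours avoids the multiplication map altogether, which is more consistent with the paper's stated policy (Remark~\ref{rmk:groupoids}) that the multiplication never enters its constructions, and the intermediate facts you establish ($D$ conserved along the flow, hence $D|_{L_{\e,h}}=0$) are exactly what is needed again in the normalization step, making the proof self-contained.

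One caveat, which affects the paper's own write-up as much as yours: the bookkeeping of $c$. What your argument really proves is that the solution of \eqref{eq:HJ} with $c\equiv 0$ and $S_{\e,0}=0$ is odd: from $dS_{\e,-h}=-dS_{\e,h}$ one gets $S_{\e,-h}=-S_{\e,h}+k(h)$, and $D|_{L_{\e,h}}=0$ forces $k'\equiv 0$, $k(0)=0$. Passing to a general $c$ in \eqref{eq:HJ} adds $\int_0^h c(s)\,ds$ to $S_{\e,h}$, and this added term is \emph{even} in $h$ when $c$ is odd; taken literally, the hypothesis $c(-h)=-c(h)$ on the term in \eqref{eq:HJ} therefore destroys oddness rather than preserves it (e.g.\ $\pi=0$, $c(h)=h$ gives $S_{\e,h}=hH+h^2/2$). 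The hypothesis is coherent only if the odd function $c$ is read as the additive ambiguity of the generating function itself, i.e.\ as an antiderivative of the term appearing in \eqref{eq:HJ} (this is how the paper's last line ``$S_{\e,h}-c(h)$'' uses it). Your closing sentence defers exactly this point (``chosen compatibly''), so you should either adopt that interpretation explicitly or simply state the result for $c\equiv 0$, which is all that the ``in particular'' claim about $\hat S_{\e,h}$ requires.
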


\begin{proof}
    In this proof, we shall use as an auxiliary the local symplectic groupoid structure $G_\e \rightrightarrows M$ associated with the Karasev realization $K_\e$ of $(M,\e \pi)$, as recalled in Remark \ref{rmk:Klocgd}. In particular, we shall denote $m_\e$ the multiplication map and $inv$ the inversion map. It will be a key point that 
    $$(a): \ inv(x,p)=(x,-p), \ \forall \e,$$ as proven in \cite[Lemma 3.4]{ca22}.

%
%\marginMV{Arriba está $m_\epsilon$ y abajo solo $m$. Habría que homogenizar.}
%
 Let us keep $\e$ fixed and omit it from the notation for a moment. On any local symplectic groupoid, the Hamiltonian flow of $\a^*f$, for any $f\in C^\infty(M)$, is left-invariant for $m$:
 \[ \phi^{\a^*f}_t(z) = m(z, \phi^{\a^*f}_t(\sigma\circ \a(z))), \ z\in G. \]
 It thus follows that the inverse $inv$ with respect to $m$ can be computed through the backward flow:
 \[ inv(\phi^{\a^*f}_t(\sigma(x)) ) = \phi^{\a^*f}_{-t}( \sigma\circ \a(\phi^{\a^*f}_t(\sigma(x) )).\]
Then, denoting $L_0=\sigma(M)$, we conclude that
\[ (b): \ inv( \phi^{\a^*f}_t(L_0) ) = \phi^{\a^*f}_{-t}(L_0), \]
for $t$ small enough so that the flows are defined.

Coming back to our situation, the above relations imply that
\[ L_{\e,-h} = \phi^{\a_\e^*H}_{-h}(L_0) \overset{(b)}{=} inv( \phi^{\a_\e^*H}_{h}(L_0) ) = inv(L_{\e,h} ) \overset{(a)}{=} \mu_{-1}(L_{\e,h}),\]
 where we denoted $\mu_{-1}(x,p)=(x,-p)$ as before. 
 Finally, describing $L_{\e,h}$ with the generating function $S_{\e,h} - c(h)$ as in \eqref{eq:Lsgenfun}, with $c(h)$ odd as in the statement, it follows that $S_{\e,-h}=-S_{\e,h}$, as wanted.
\end{proof}

We finalize this section with another important property of the obtained methods. Given a method to approximate a dynamical system, say $M_h(x)$, the \textbf{adjoint method} is simply defined by
\begin{equation}
M^*_h(x) = M^{-1}_{-h}(x). %\tag{Adjoint of a Method}    
\end{equation}
Notice that since $M_h(x)$ is not a flow in general, then   $M^{-1}_{-h}(x) \neq M_h(x)$ and the equation above usually produces different methods. The following result is a direct consequence for the K-HJ method of the above special property.
\begin{corollary}[K-HJ integrators are Self-Adjoint] The K-HJ Poisson integrator is self-adjoint in $h$:
\[ \left(M^{KHJ}_{\e,h}\right)^* := (M^{KHJ}_{\e,-h})^{-1} = M^{KHJ}_{\e,h}. \]
\end{corollary}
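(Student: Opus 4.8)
The plan is to derive the corollary from two facts: the parity of the generating function established in Proposition \ref{prop:Sodd}, and an elementary ``inverse = flip $p$'' identity for induced maps that is inherited from the groupoid inversion $inv=\mu_{-1}$ of the Karasev realization. Throughout I write $\mu_{-1}(x,p)=(x,-p)$ and recall $M^{KHJ}_{\e,h}=\hvarphi_{\hL_{\e,h}}$ with $\hL_{\e,h}=graph(d\hat S_{\e,h})$.

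First I would record the parity of the dynamic Lagrangian. By Proposition \ref{prop:Sodd} we may take $\hat S_{\e,h}$ to be odd in $h$, so $d\hat S_{\e,-h}=-d\hat S_{\e,h}$ and hence
\[ \hL_{\e,-h}=graph(d\hat S_{\e,-h})=\mu_{-1}\bigl(graph(d\hat S_{\e,h})\bigr)=\mu_{-1}(\hL_{\e,h}). \]
Since $\mu_{-1}$ fixes the zero section $\sigma(M)=0_M$ and $\hL_{\e,h}$ is close to $0_M$ for $h\sim 0$, the Lagrangian $\mu_{-1}(\hL_{\e,h})$ is again a bisection for $\ha_\e,\hb_\e$, so all induced maps below are well defined.

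Next I would prove the key identity $\hvarphi_{\mu_{-1}(L)}=\hvarphi_{L}^{-1}$ for any local Lagrangian bisection $L$. The starting point is $\beta_\e(x,p)=\alpha_\e(x,-p)$, i.e. $\beta_\e=\alpha_\e\circ\mu_{-1}$; because $\mu_{-1}$ does not involve $\e$, truncation in $\e$ commutes with it and one gets $\hb_\e=\ha_\e\circ\mu_{-1}$ and equivalently $\ha_\e=\hb_\e\circ\mu_{-1}$. Tracking restrictions, for $w\in L$ one has $\hb_\e(\mu_{-1}(w))=\ha_\e(w)$ and $\ha_\e(\mu_{-1}(w))=\hb_\e(w)$, whence
\[ \hb_\e|_{\mu_{-1}(L)}=\ha_\e|_{L}\circ(\mu_{-1}|_{L})^{-1},\qquad \ha_\e|_{\mu_{-1}(L)}=\hb_\e|_{L}\circ(\mu_{-1}|_{L})^{-1}. \]
Inverting the first and composing with the second, the two copies of $\mu_{-1}|_{L}$ cancel and I obtain
\[ \hvarphi_{\mu_{-1}(L)}=\ha_\e|_{\mu_{-1}(L)}\circ\bigl(\hb_\e|_{\mu_{-1}(L)}\bigr)^{-1}=\hb_\e|_{L}\circ(\ha_\e|_{L})^{-1}=\hvarphi_{L}^{-1}. \]

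Combining the two steps finishes the argument: evaluating at $L=\hL_{\e,h}$ and using $\hL_{\e,-h}=\mu_{-1}(\hL_{\e,h})$ gives
\[ M^{KHJ}_{\e,-h}=\hvarphi_{\hL_{\e,-h}}=\hvarphi_{\mu_{-1}(\hL_{\e,h})}=\bigl(\hvarphi_{\hL_{\e,h}}\bigr)^{-1}=\bigl(M^{KHJ}_{\e,h}\bigr)^{-1}, \]
so that $\bigl(M^{KHJ}_{\e,h}\bigr)^{*}=(M^{KHJ}_{\e,-h})^{-1}=M^{KHJ}_{\e,h}$. I expect the only delicate point to be the middle step: the identity $\hvarphi_{\mu_{-1}(L)}=\hvarphi_L^{-1}$ is conceptually just the statement that flipping the sign of $p$ implements groupoid inversion and hence inverts the induced diffeomorphism, but some care is needed to keep track of the domains of the restricted diffeomorphisms $\ha_\e|_L$, $\hb_\e|_L$ and of the involution $\mu_{-1}|_L$ when inverting and composing. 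Everything else (the parity of $\hat S_{\e,h}$ and the relation $\hb_\e=\ha_\e\circ\mu_{-1}$) is inherited directly from Proposition \ref{prop:Sodd} and the Karasev construction.
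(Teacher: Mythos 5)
Your proof is correct and takes essentially the same route the paper intends: the paper states this corollary as a ``direct consequence'' of Proposition \ref{prop:Sodd}, and your two ingredients --- the parity relation $\hL_{\e,-h}=\mu_{-1}(\hL_{\e,h})$ and the identity $\hvarphi_{\mu_{-1}(L)}=\hvarphi_L^{-1}$, which follows from $\hb_\e=\ha_\e\circ\mu_{-1}$ (a relation that survives truncation exactly, as you note) --- are precisely the details that the paper leaves implicit. The domain bookkeeping for the restricted maps and the bisection property of $\mu_{-1}(\hL_{\e,h})$ are handled correctly, so nothing is missing.
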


\medskip

%%%%%%%%%%%%%%%%%%%%%%%%%%%%%%%%%%%%%%%%%%%%%%%%%%%%%%
\subsection{Method 2: dynamic approximation via Collective Integrators}\label{subsec:collective}
%%%%%%%%%%%%%%%%%%%%%%%%%%%%%%%%%%%%%%%%%%%%%%%%%%%%%%
Another natural way to approach our dynamic approximation step is through the direct use of {\it collective integrators} to approximate $\phi^{\ha_\e^*H}_h$, see~\cite{McLachlan_2014}. 

To explain this approximation, let us recall that the exact flow for our ODE \eqref{eq:hameq} can be obtained as in Thm. \ref{CDW} together with Rmk. \ref{rmk:rescaling},
\[ \phi^H_{t=\e h} = \a_\e \circ \phi^{\a_\e^*H}_h \circ \sigma .\]
We already know that we can replace $\a_\e$ with the Karasev approximate realization $\ha_\e$, leading to a \emph{preliminary approximation map}:
\[ \hphi_{\e,h} = \ha_\e \circ  \phi^{\ha_\e^*H}_h \circ \sigma: M \to M. \]
Note that this is approximately in the general class discussed at the beginning of this Section, since
\[ \hphi_{\e,h} =  \halpha_\epsilon \circ \hbeta_\epsilon|_{\hL_{\e,h}}^{-1} \ mod \ \mathcal{O}(\e^{n+1}), \text{ for } \hL_{\e,h}=\phi^{\ha_\e^*H}_h(0_M).\]
This follows from the fact that $\hb_\e$ is approximately conserved along the Hamiltonian flow of $\ha_\e^*H$, since $\b_\e$ is exactly conserved along the flow of $\a_\e^*H$ (see the observation below Theorem \ref{CDW}).

\medskip

Nevertheless, the above is not enough to define a complete method since the ``exact'' Hamiltonian flow 
 of $\ha_\e^*H$ can be as hard to compute as for the original ODE \eqref{eq:hameq}. We thus consider an approximation $$\Sigma^{\ha_\e^*H}_h:\R^{2n}\to \R^{2n}$$ for the Hamiltonian flow $\phi^{\ha_\e^*H}_h$ defined by a \emph{symplectic integrator of order $m$ in $h$}, $\Sigma_h$, on $(S\subset \R^{2n},\omega=dp_i\wedge dx^i)$ applied to the \emph{collective Hamiltonian} $\ha_\e^*H \in C^\infty(\R^{2n})$.
The corresponding complete method will be called {\bf K-Collective Poisson integrator}, and it has as underlying approximation map
\[ M^{KC}_{\e,h}(x) : = \ha_\e \circ \Sigma^{\ha_\e^*H}_h \circ \sigma .\]
As an algorithm, we have the following description.
\begin{algorithm}[H]
\caption{K-Collective Poisson Integrator $x_k \mapsto x_{k+1}$ for the ODE \eqref{eq:hameq} }\label{algorithm:collective}
\begin{algorithmic}[1]
%\Procedure{Roy}{$a,b$}      % \Comment{This is a test}
    \State Given $x_k \in \mathbb{R}^n$,
    evolve $(x_k,p_k=0)\in \R^{2n}$ using the symplectic integrator $\Sigma_h$, with time step $h$ and Hamiltonian $\ha_\e^*H$, to obtain $(\hat x_{k+1},\hat p_{k+1})$
    \State Compute $x_{k+1} = \ha_\e (\hat x_{k+1},\hat p_{k+1})$.
    \end{algorithmic}
\end{algorithm}
Notice that, in general, this method is not directly in the class of methods defined by $\hat K_\e$, together with a choice of Lagrangian $\hL_{\e,h}$, as in the setting of Theorem \ref{thm:2}. Nevertheless, it is approximately so and we can deduce its properties, as follows.

\begin{proposition}
    The K-Collective Poisson integrator, defined by $\hat K_\e$ of order $n$ and a symplectic integrator $\Sigma_h$ of order $m$ in $h$, coincides with the map
    \[
    \halpha_\epsilon \circ \hbeta_\epsilon|_{\hL_{\e,h}}^{-1}  \text{ for } \hL_{\e,h}=\phi^{\ha_\e^*H}_h(0_M)\]
    up to order $n$ in $\e$ and up to order $m$ in $h$. In particular, as a consequence of Theorem \ref{thm:2}, it preserves the Poisson geometry to order $n$ and it approximates the dynamics to order $\min\{n,m\}$.
\end{proposition}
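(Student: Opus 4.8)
The plan is to prove the coincidence statement first, as it is the heart of the proposition, and then read off the geometric and dynamic orders from Theorem \ref{thm:2} applied to the genuinely-Lagrangian comparison map. As in the previous proofs, I would first reduce to $M=\R^n$ and $S\subset T^*M\simeq \R^{2n}$ with $\omega=\omega_M$. The key observation is that $M^{KC}_{\e,h}=\ha_\e\circ \Sigma^{\ha_\e^*H}_h\circ \sigma$ differs from an honest induced map by two independent and controllable sources of error: the discretization error of $\Sigma_h$ in $h$, and the failure of $\hb_\e$ to be exactly conserved along the flow of $\ha_\e^*H$ in $\e$.

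For the discretization step, I would use that $\Sigma_h$ is a symplectic integrator of order $m$, so $\Sigma^{\ha_\e^*H}_h = \phi^{\ha_\e^*H}_h \ mod \ \mathcal{O}(h^{m+1})$. Composing on the left with the fixed ($h$-independent) smooth map $\ha_\e$ and on the right with $\sigma$, and invoking the elementary composition facts recorded in the proof of Theorem \ref{thm:1}, one obtains
\[ M^{KC}_{\e,h} = \ha_\e\circ \phi^{\ha_\e^*H}_h\circ \sigma \ mod \ \mathcal{O}(h^{m+1}) = \hphi_{\e,h} \ mod \ \mathcal{O}(h^{m+1}). \]
For the geometric step, I would invoke the observation already made preceding the statement: since $\hb_\e=\b_\e$ and $\ha_\e^*H=\a_\e^*H$ to order $n$ in $\e$, and $\b_\e$ is exactly conserved along $\phi^{\a_\e^*H}$ (the remark below Theorem \ref{CDW}), the map $\hb_\e$ is conserved along $\phi^{\ha_\e^*H}$ up to $\mathcal{O}(\e^{n+1})$; this is exactly what identifies $\hphi_{\e,h}$ with the induced map of $\hL_{\e,h}=\phi^{\ha_\e^*H}_h(0_M)$,
\[ \hphi_{\e,h}=\halpha_\epsilon \circ \hbeta_\epsilon|_{\hL_{\e,h}}^{-1} \ mod \ \mathcal{O}(\e^{n+1}). \]
Combining the two displays yields the coincidence claim, $M^{KC}_{\e,h}=\halpha_\epsilon \circ \hbeta_\epsilon|_{\hL_{\e,h}}^{-1} \ mod \ \mathcal{O}(\e^{n+1})+\mathcal{O}(h^{m+1})$.

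It then remains to apply Theorem \ref{thm:2} to the comparison map. I would verify its hypotheses for $\hL_{\e,h}=\phi^{\ha_\e^*H}_h(0_M)$: this submanifold is Lagrangian, being the image of the Lagrangian $0_M$ under the Hamiltonian symplectomorphism $\phi^{\ha_\e^*H}_h$; it equals $\sigma(M)$ at $h=0$; and, because $\ha_\e^*H$ and $\a_\e^*H$ agree to order $n$ in $\e$, it approximates $L_{\e,h}=\phi^{\a_\e^*H}_h(0_M)$ up to $\mathcal{O}(\e^{n+1})$ uniformly in $h$. The first part of Theorem \ref{thm:2} (which needs only that $\hL_{\e,h}$ is a Lagrangian bisection close to $\sigma(M)$) then gives that the comparison map preserves $\pi$ and Casimirs to order $n$ in $\e$, and its dynamic part gives approximation of $\phi^H_{t=\e h}$ with error $\mathcal{O}(\e^{n+1})$. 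Transferring along the coincidence above, the geometric preservation of $M^{KC}_{\e,h}$ is inherited at order $n$ in $\e$ up to the $\mathcal{O}(h^{m+1})$ term, which is naturally counted as part of the dynamic approximation, while this $\mathcal{O}(h^{m+1})$ combines with the $\mathcal{O}(\e^{n+1})$ dynamic error to give total dynamic order $\min\{n,m\}$, as claimed.

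The main obstacle I anticipate is conceptual rather than computational: $M^{KC}_{\e,h}$ is not literally of the form $\halpha_\epsilon \circ \hbeta_\epsilon|_{L}^{-1}$ for a Lagrangian bisection $L$, since the discrete integrator $\Sigma^{\ha_\e^*H}_h$ conserves $\hb_\e$ only approximately, so Theorem \ref{thm:2} cannot be invoked verbatim. The delicate part is therefore to justify that the two error sources — the $h$-discretization and the $\e$-truncation of the realization data — propagate additively through the composition and through the implicit inversion $\hbeta_\epsilon|_{\hL_{\e,h}}^{-1}$ defining the induced map, so that they may be tracked independently and simply added. Making the uniformity in the remaining parameter precise (as in the auxiliary $s\sim \e$ device used in the proof of Theorem \ref{thm:2}) is the one point requiring genuine care.
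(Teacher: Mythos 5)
Your proposal is correct and follows essentially the same route as the paper: both split the error into the $\mathcal{O}(h^{m+1})$ discrepancy between $\Sigma^{\ha_\e^*H}_h$ and $\phi^{\ha_\e^*H}_h$, and the $\mathcal{O}(\e^{n+1})$ discrepancy between $\hphi_{\e,h}=\ha_\e\circ\phi^{\ha_\e^*H}_h\circ\sigma$ and $\halpha_\epsilon \circ \hbeta_\epsilon|_{\hL_{\e,h}}^{-1}$ coming from the approximate conservation of $\hb_\e$, and then invoke Theorem \ref{thm:2}. Your additional care in verifying the hypotheses of Theorem \ref{thm:2} for $\hL_{\e,h}=\phi^{\ha_\e^*H}_h(0_M)$ (Lagrangianity, $\hL_{\e,0}=\sigma(M)$, closeness to $L_{\e,h}$) is a sound elaboration of what the paper leaves implicit.
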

\begin{proof}
Before the statement of the theorem, we already explained that the map $\halpha_\epsilon \circ \hbeta_\epsilon|_{\hL_{\e,h}}^{-1}$ coincides with $\hphi_{\e,h} = \ha_\e \circ  \phi^{\ha_\e^*H}_h \circ \sigma$ up to order $n$ in $\e$. It is then enough to show that $M^{KC}_{\e,h}$ coincides with the latter $\hphi_{\e,h}$ up to order $m$ in $h$.
But this follows directly from the fact that the symplectic integrator is of order $m$, so that $\Sigma_h^{\ha_\e^*H}=\phi^{\ha_\e^*H}_h$ modulo $\mathcal{O}(h^{m+1})$ errors.
\end{proof}

\begin{remark}[Rescaling properties from those of $K_\e$]
Let $\hL_{\e,h} = \phi^{\ha_\e^*H}_h(0_M)$, where the Hamiltonian flow is the ``exact'' one. By the rescaling properties of the approximate $\ha_\e$ observed in Remark \ref{rmk:rescalingKaprox}, it follows that
\[ \hL_{\e,\lambda h} = \mu_\lambda(\hL_{\lambda \e, h}) \text{ and } \ha_{\e}\circ \phi^{\ha_{\e}^*H}_{\lambda h} \circ \mu_\lambda = \ha_{\lambda \e}\circ \phi^{\ha_{\lambda \e}^*H}_h  . \]
Underlying these properties, on top of the rescaling one $\ha_{\lambda \e}=\ha_\e \circ \mu_\lambda$, one uses the rescaling property $\mu_\lambda^* \omega = \lambda \omega$ of the symplectic form and its consequences for Hamiltonian flows.
On the other hand, since the symplectic integrator is of order $m$, an analogue of the eq. in Remark \ref{rmk:rescalingKaprox} holds approximately for $\Sigma^{\ha_\e^*H}_h$, leading to the following \emph{approximate rescaling property of the K-Collective method}:
\[ M^{KC}_{ \e, \lambda h} = M^{KC}_{\lambda \e,h } \text{ up to order $m$ in $h$.} \] 
\end{remark}

%%%%%%%%%%%%%%%%%%%%%%%%%%%%%%%%%%%%%%%%%%%%%%%%%%%%%%
%%%%%%%%%%%%%%%%%%%%%%%%%%%%%%%%%%%%%%%%%%%%%%%%%%%%%%
\section{Simulations and Numerical Experimentation}\label{sec:simulations}
%%%%%%%%%%%%%%%%%%%%%%%%%%%%%%%%%%%%%%%%%%%%%%%%%%%%%%
%%%%%%%%%%%%%%%%%%%%%%%%%%%%%%%%%%%%%%%%%%%%%%%%%%%%%%

In this section we illustrate our finding in several examples. Namely,

\begin{itemize}
    \item $\mathfrak{so}^*(3)$ {\it and the Rigid Body}. In this case we have a linear Poisson structure and the integrating symplectic groupoid is known (see \cite{felemamava17} and references therein). Nonetheless, we approximate the symplectic groupoid using our constructions and test the obtained results.
    \item {\it Lotka-Volterra Dynamics}. In this case we have a quadratic Poisson tensor over $\mathbb{R}^3$ with a Linear Hamiltonian. The dynamics are easy to approximate, but the approximation of the symplectic groupoid is harder than in $\mathfrak{so}^*(3)$.
    \item {\it Non-canonical Symplectic Structure}. Our constructions apply not only to Poisson manifolds, but they can also used for symplectic structures that are non-canonical. Here we study a simple example where we consider a canonical symplectic structure modified by a magnetic term.
\end{itemize}

We focus on the first method, which we called K-HJ Poisson integrator, and relegate the second (K-Collective) for future work.
Then, in the examples below, we shall be considering the K-HJ method $M^{KHJ}_{\e,h}$, where the underlying approximation $\ha_\e$ of Section \ref{subsec:Ktruncated} is constructed by numerically approximating the map $\phi_\e$ in the defining relation \eqref{eq:alphaKar} for $\a_\e$ to a high degree and, then, after expanding both sides in $\e$, solving for the coefficients $\a_{(k)}, \ k\leq n$, symbolically using \emph{Mathematica}.

%%%%%%%%%%%%%%%%%%%%%%%%%%%%%%%%%%%%%%%%%%%%%%%%%%%%%%
\subsection{Example 1: $\mathfrak{so}^*(3)$ and the rigid body}
%%%%%%%%%%%%%%%%%%%%%%%%%%%%%%%%%%%%%%%%%%%%%%%%%%%%%%
It is well known that the symplectic groupoid integrating $\mathfrak{so}^*(3)$ is just $T^*SO(3)$. Nonetheless, we illustrate here our findings providing an alternative construction of the integrating symplectic groupoid. A first question is how the error is reflected on the approximation of the symplectic realization $\alpha_\epsilon$. Here, we illustrate the answer by using as a measurement of the error the discrepancy between the Poisson tensor acting on the basis $dx^1\wedge dx^2$, $dx^2\wedge dx^3$ and $dx^1\wedge dx^3$ and the symplectic structure acting on its pull-backed version. Namely, if we denote by $\omega$ the canonical symplectic form on $\mathbb{R}^6$,
% \begin{align*}
%     Error^n(\epsilon) = &\left((\epsilon\pi(dx^1, dx^2) - \Omega((\hat\alpha^{n}_\epsilon)^*dx^1,(\hat\alpha^{n}_\epsilon)^*dx^2))^2 \right.
%     \\ &+ (\epsilon\pi(dx^1, dx^3) - \Omega((\hat\alpha^{n}_\epsilon)^*dx^1,(\hat\alpha^{n}_\epsilon)^*dx^3))^2 \\
%      & \left.+ (\epsilon\pi(dx^3, dx^2) - \Omega((\hat\alpha^{n}_\epsilon)^*dx^3,(\hat\alpha^{n}_\epsilon)^*dx^2))^2 \right)^{1/2}
% \end{align*} 

\begin{align}\label{sym:error}
    Error^n(\epsilon) = &[(\epsilon\pi(dx^1, dx^2) - \omega(\hat\alpha_\epsilon^*dx^1,\hat\alpha_\epsilon^*dx^2))^2 \notag
    \\ &+ (\epsilon\pi(dx^1, dx^3) - \omega(\hat\alpha_\epsilon^*dx^1,\hat\alpha_\epsilon^*dx^3))^2 \notag \\
     & + (\epsilon\pi(dx^3, dx^2) - \omega(\hat\alpha_\epsilon^*dx^3,\hat\alpha_\epsilon^*dx^2))^2 ]^{1/2}. \tag{Symplectic Realization Error}
\end{align} 

Figure~\ref{fig:so(2)} below plots the symplectic realization error as a function of $\epsilon$.

\vspace*{0.5cm}
\begin{figure}[H]
    \centering
    \includegraphics[width = 0.7\textwidth]{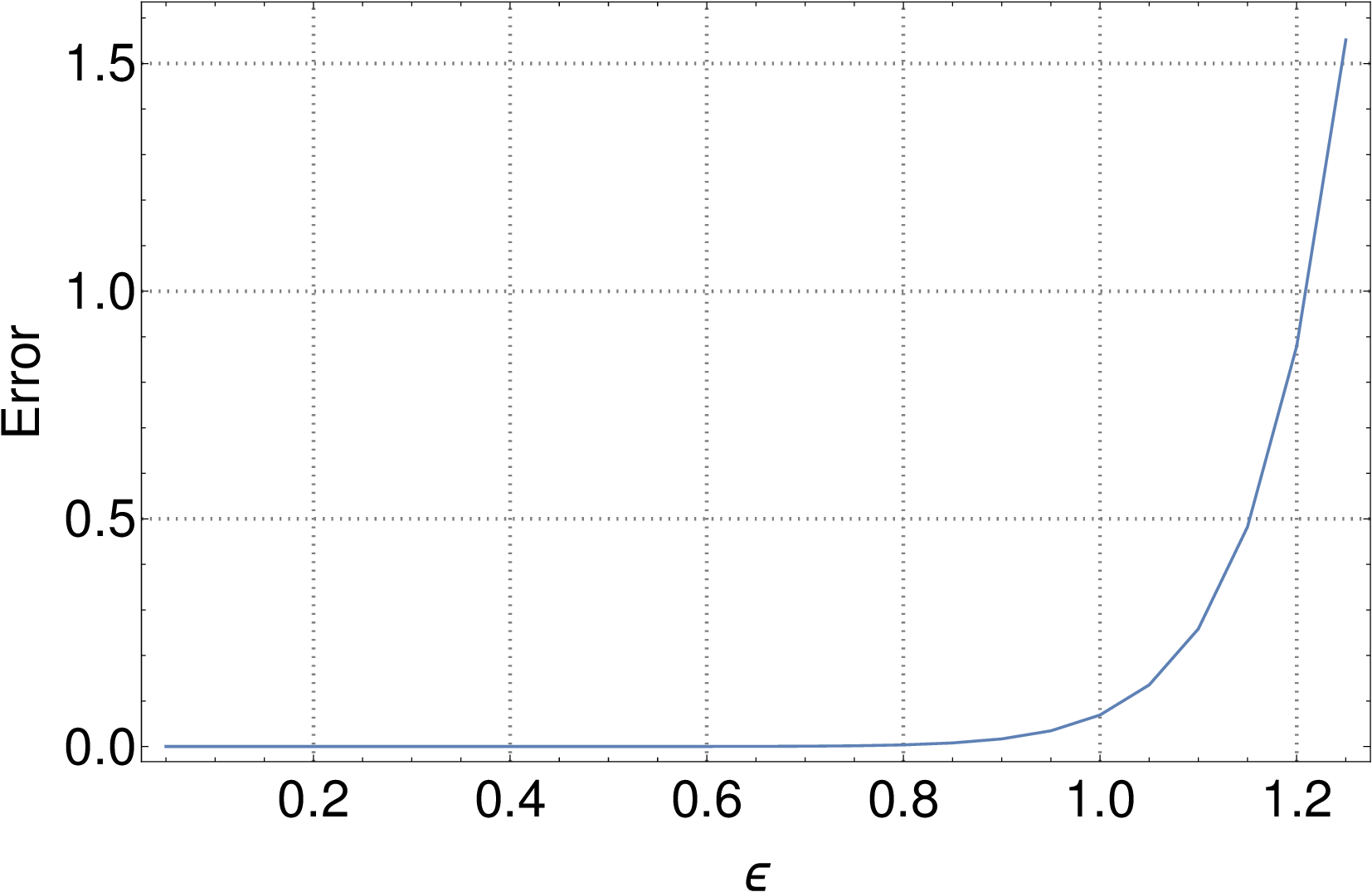}
    \includegraphics[width = 0.7\textwidth]{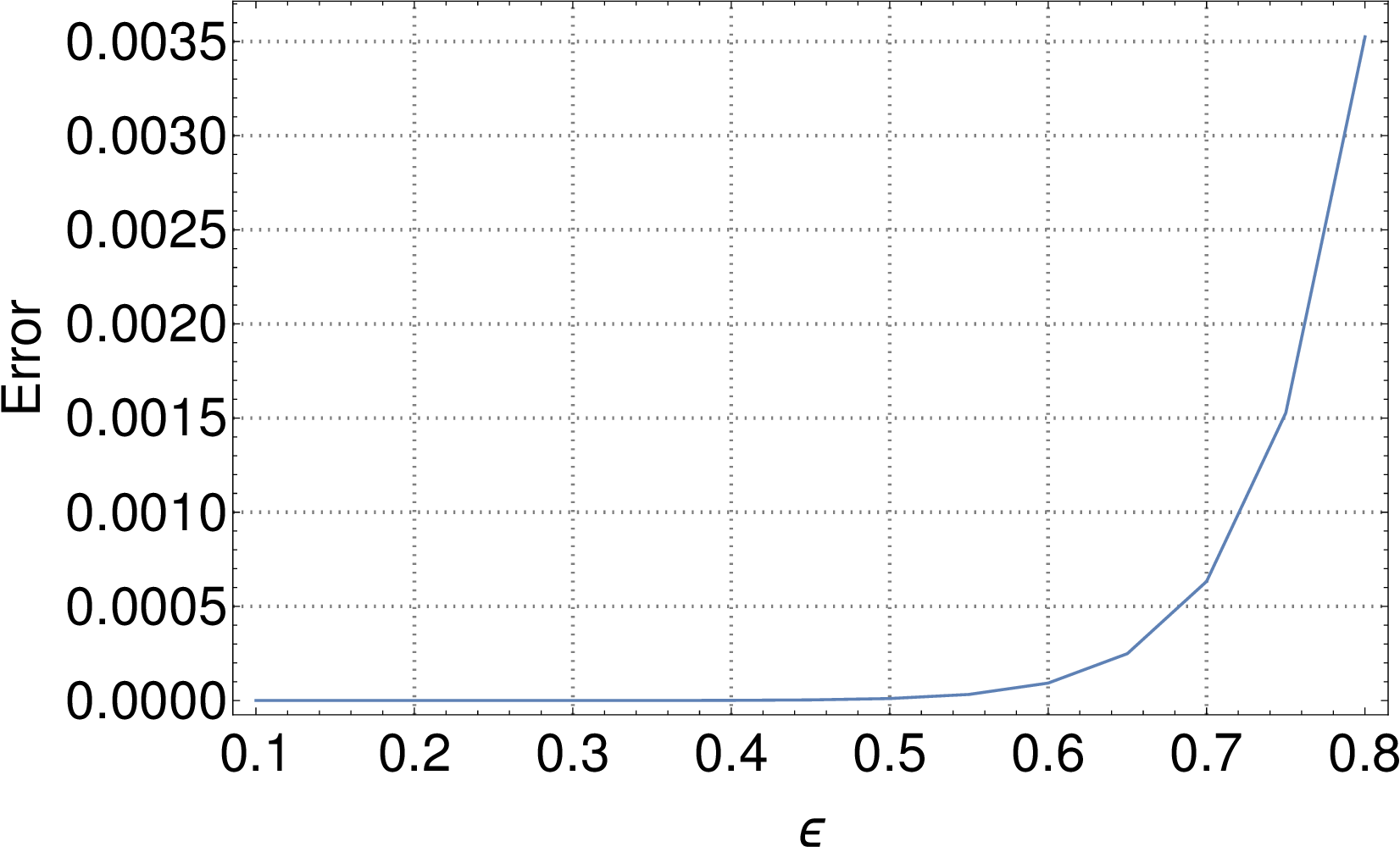}
\end{figure}

\begin{figure}[H]
    \centering
    \includegraphics[width = 0.6\textwidth]{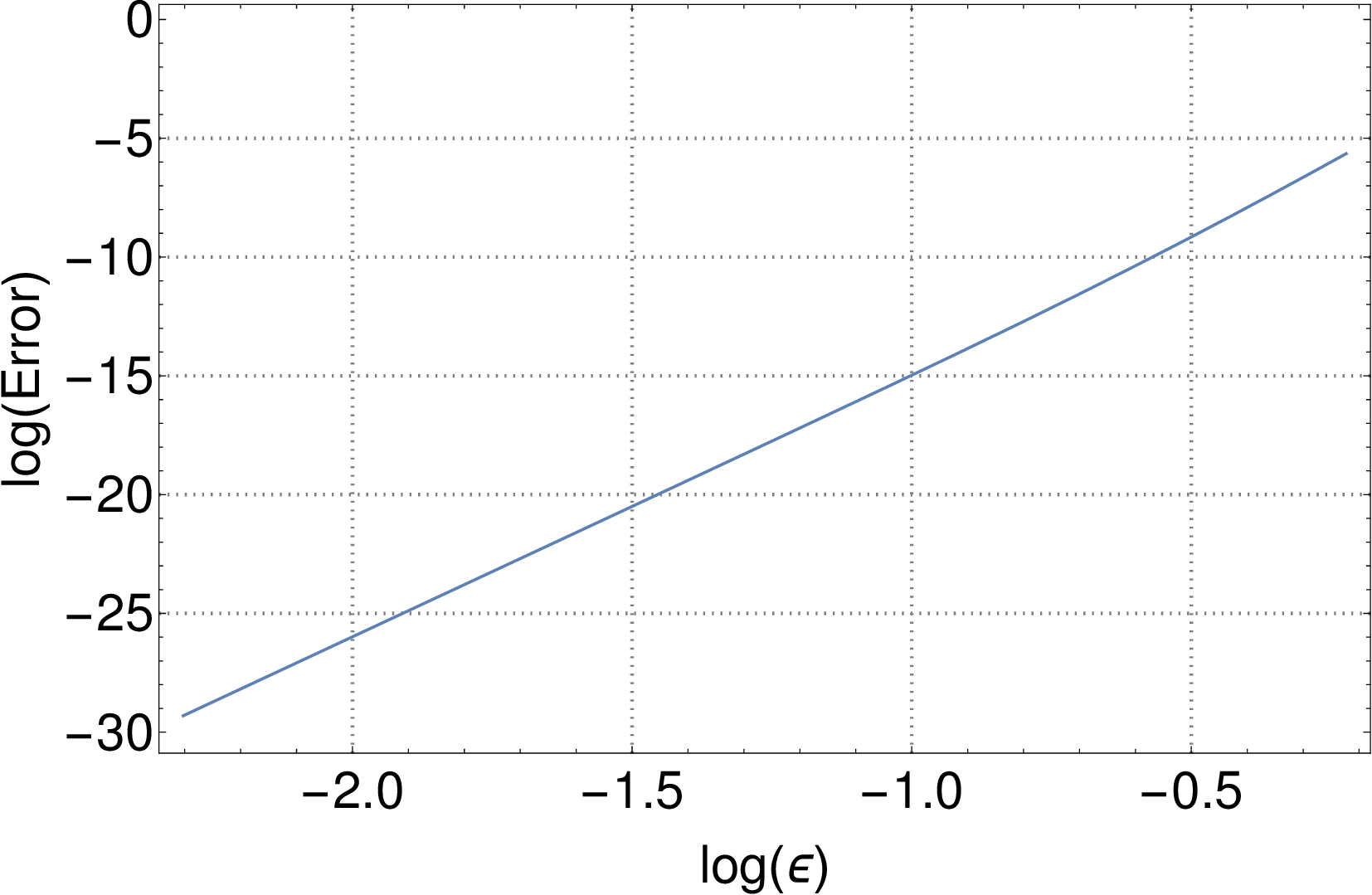}
    \caption{Illustration of the error of the symplectic realization at the point $(2, 3, 3, 1, 2, 3)$. ({\it Top}) As explained in the beginning of this section, we consider an order $n=10$ approximation $\ha_\e$ to $\alpha_\epsilon$ computed using Mathematica. We showcase the error, measured as the square of the differences in the realization, following~\eqref{sym:error}. We observe a small error even for large values of $\epsilon$. ({\it Middle}) The same plot but restricted to the interval $[0,0.8]$, where the error shows the biggest increment. We see that up to $\epsilon = 0.6$ the error is quite small. ({\it Bottom}) Representation of the logarithm of the error versus the logarithm of epsilon. We observe numerically a slope of approximately $10.99$ for small values of $\epsilon$. Taking into account that derivatives have been computed numerically, which introduces an error, this matches our theoretical prediction of the error being $\mathcal{O}(\epsilon^{n +1}) = \mathcal{O}(\epsilon^{11})$.
    }
    \label{fig:so(2)}
\end{figure}
%
% Taken from the Mathematica code: General_poisson_integrators_order_10.
%
Since $n = 10$ is a quite large order of approximation, we consider below a more modest scenario, where we take $n = 4$. The corresponding plots are presented below.

\begin{figure}[H]
    \centering
    \includegraphics[width = 0.6\textwidth]{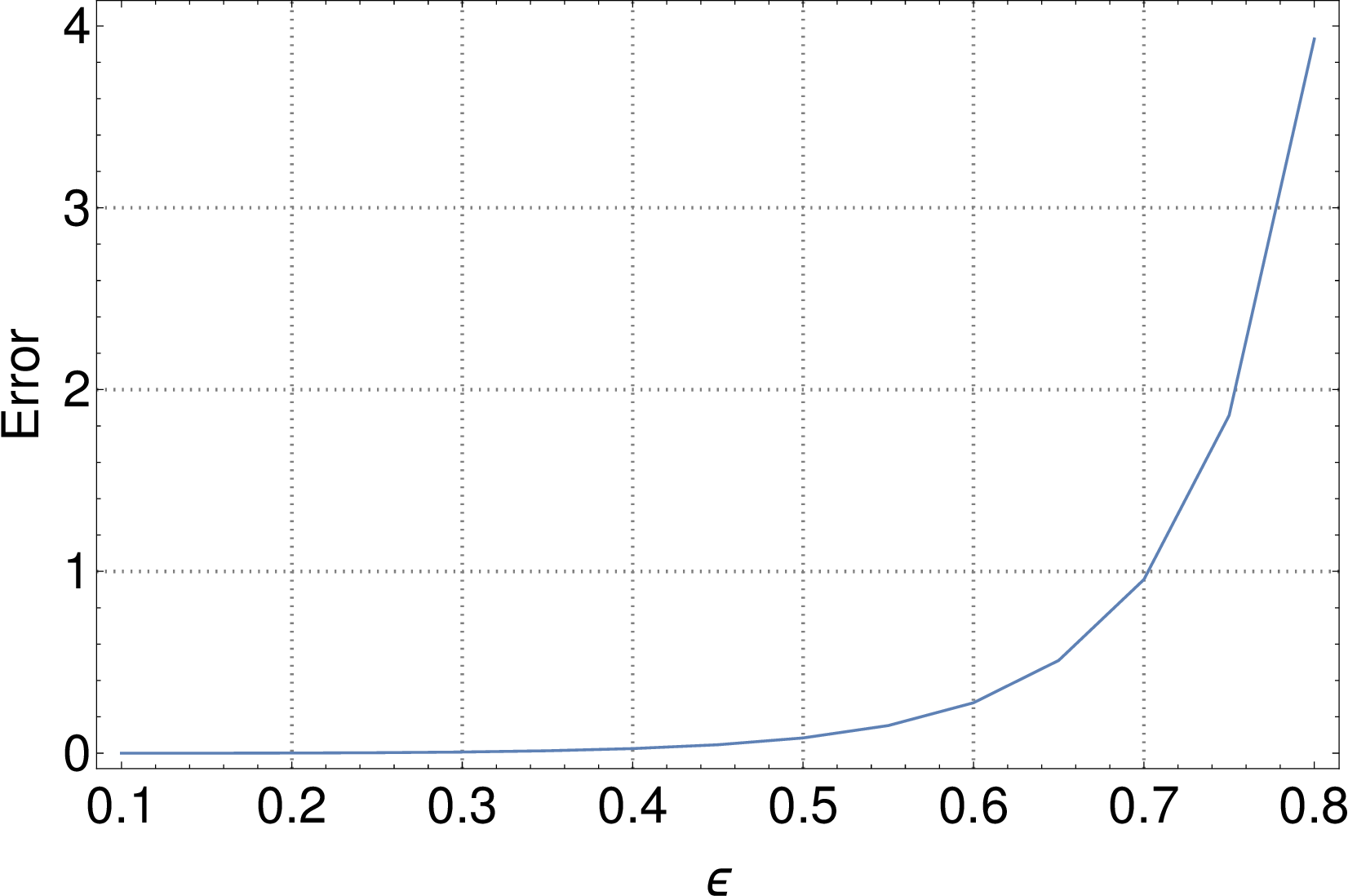}
    \includegraphics[width = 0.6\textwidth]{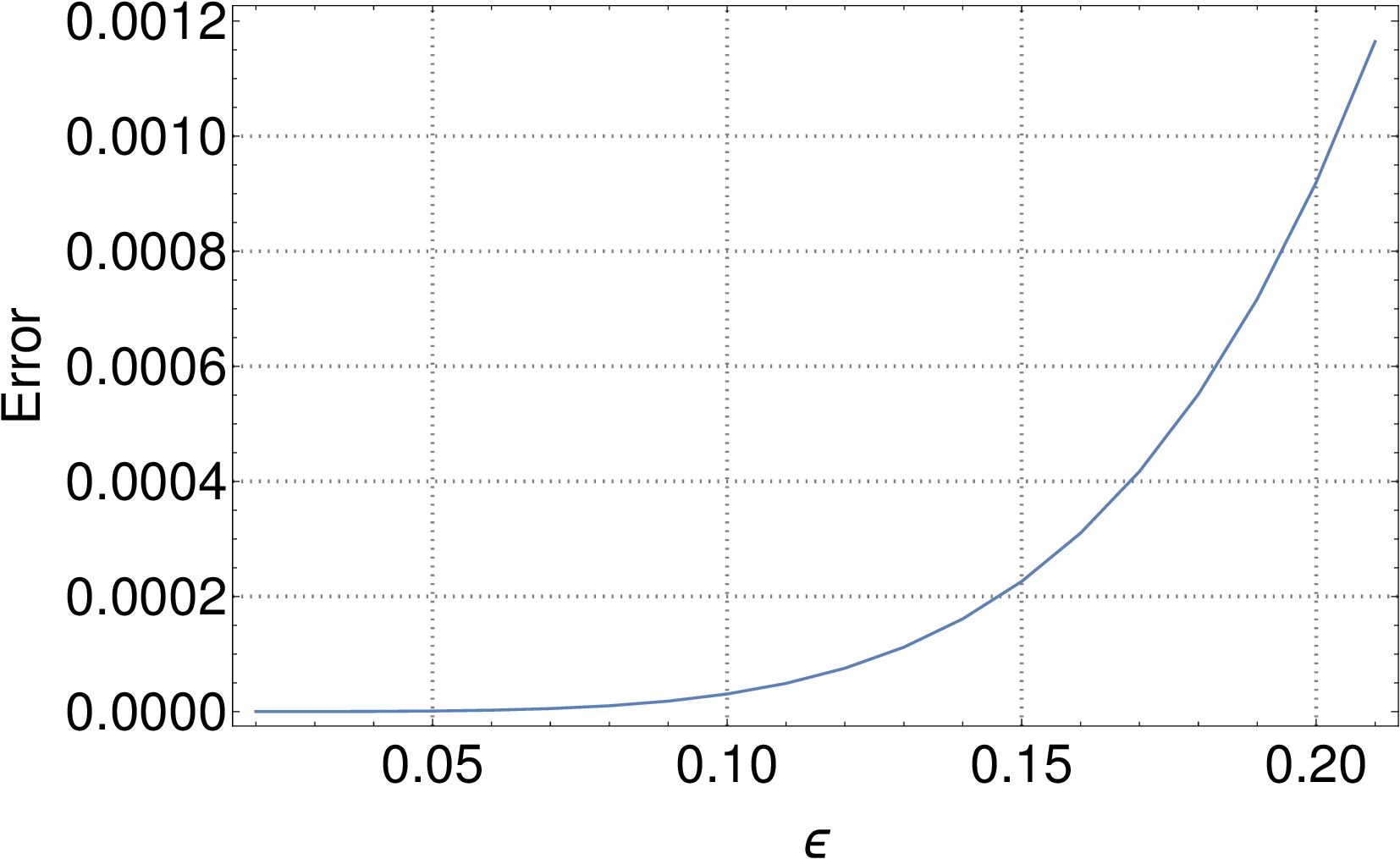}
      \includegraphics[width = 0.6\textwidth]{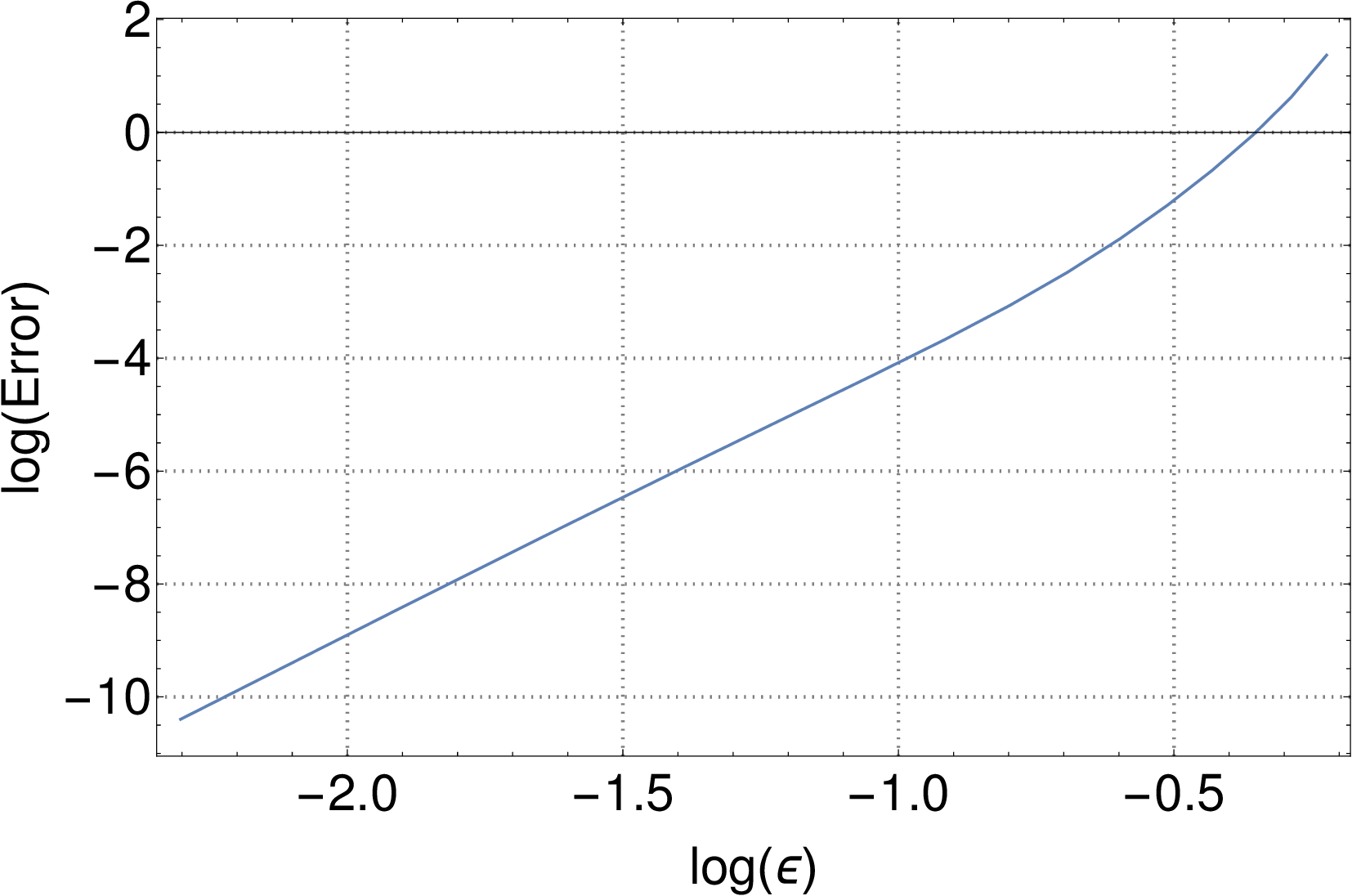}
    \caption{({\it Top}) We consider an order $4$ approximation to $\alpha_\epsilon$. We observe a small error even for quite large values of $\epsilon$. ({\it Middle}) The same plot as in the figure above, but with $\epsilon$ ranging on the interval $[0,0.2]$. ({\it Bottom}) Plot of the figure above, but representing the the logarithm of the error versus the logarithm of epsilon. We observe numerically a slope of approximately $4.9$ for small values of $\epsilon$. Taking into account that derivatives have been computed numerically, this matches our theoretical prediction of the error being $\mathcal{O}(\epsilon^{n +1}) = \mathcal{O}(\epsilon^{5})$.
    }
    \label{fig:so(23}
\end{figure}
%
% Taken from the Mathematica code: General_poisson_integrators_order_4.
%
The figures above match our theoretical findings and showcase nice approximations of the symplectic realization. We proceed now to study to what extend these nice properties also hold for the approximation of the dynamics. We study the evolution of  the Casimir and Hamiltonian, when the Hamiltonian is given by
\[H = (x^1)^2/2 + (x^2)^2/1.5 + (x^3)^2/2.5.\]
We take values $\epsilon = 0.1$, stepsize $h = 2$, and solve the Hamilton-Jacobi  to order $2$. In our setting $\alpha_\epsilon$ is approximated to order $6$. We obtain then integrators and observe the conservation of the Hamiltonian and Casimir ($Casimir(x^1,x^2,x^3) = (x^1)^2 + (x^2)^2 + (x^3)^2$) through the evolution across a simulated trajectory starting at the point $(1,3,3)$ for reference.
\begin{figure}[H]
    \centering
    \includegraphics[width = 0.6\textwidth]{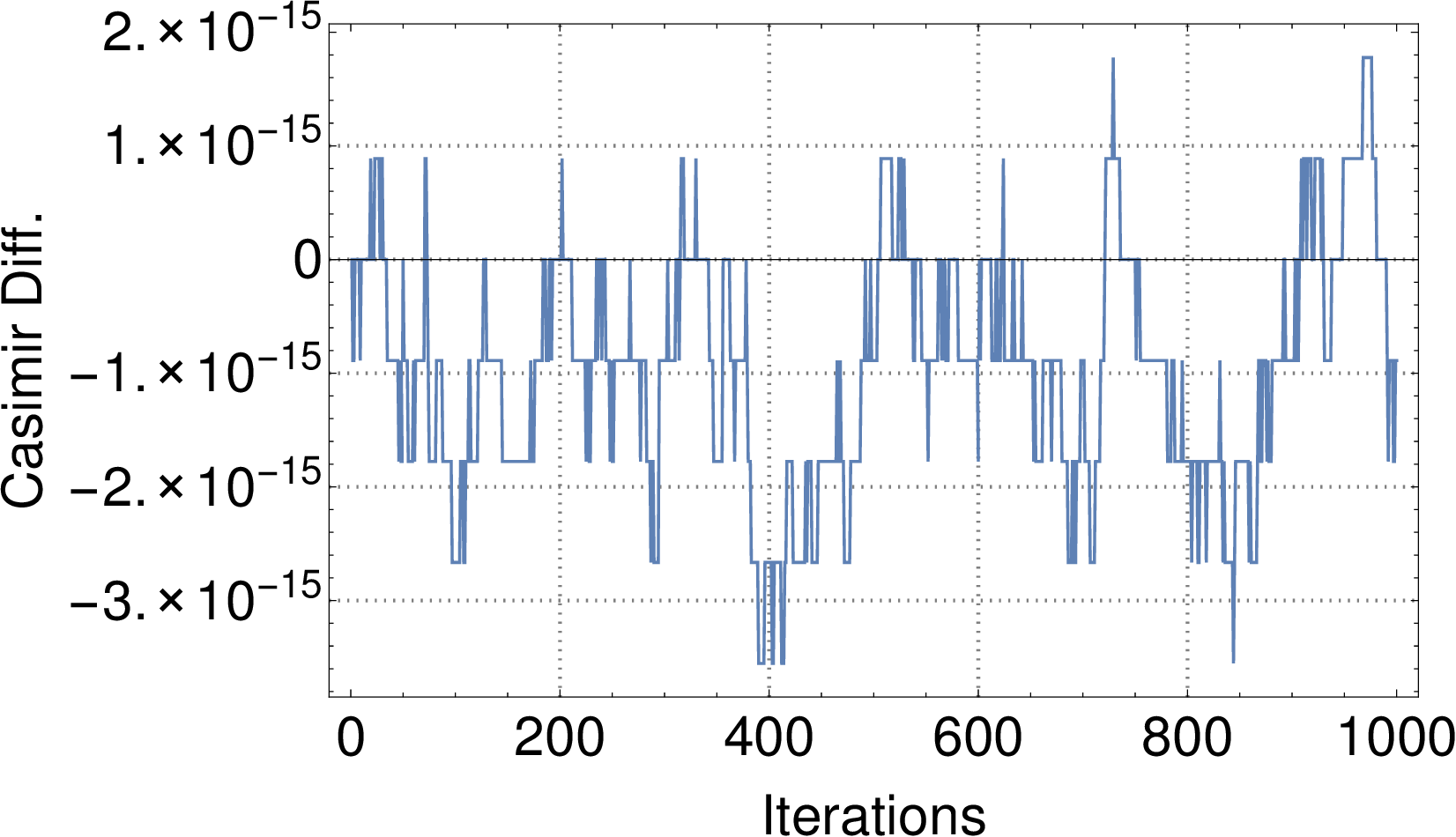}
    \includegraphics[width = 0.6\textwidth]{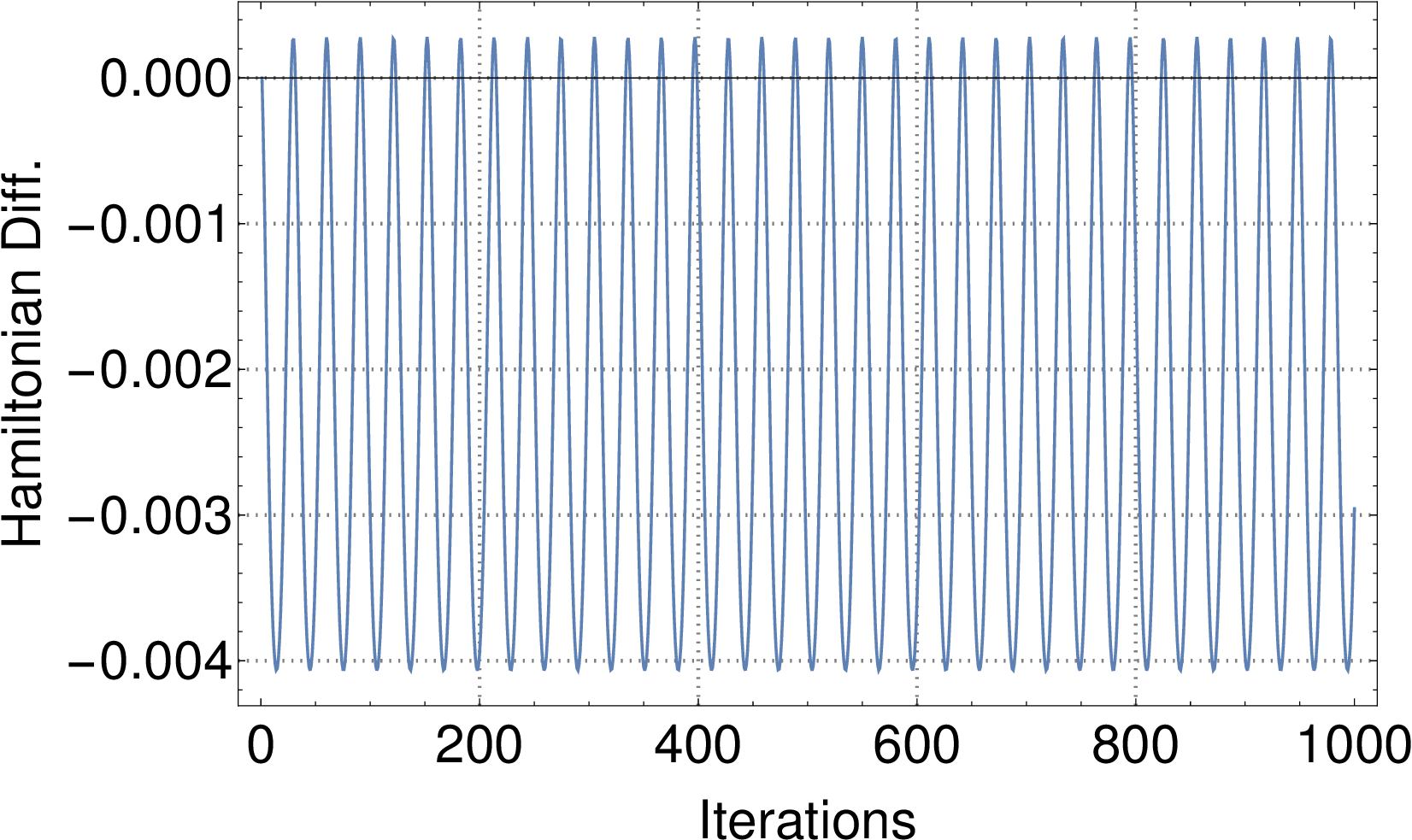}
    \caption{({\it Top}) Evolution of the difference of the Casimir (value at iteration - initial value). We observe conservation of the Casimir of order higher than expected theoretically. This is probably due to the fact that the Poisson structure is linear.  ({\it Bottom}) Evolution of the difference of the Hamiltonian (value at current iteration - initial value) through the simulated trajectory. Due to the low order approximation of the dynamics we observe oscillations of the expected order. These plots clearly illustrate how dynamics and geometry can be approximated to different orders.  }
    \label{fig:rigid_body_dynamics}
\end{figure}

%
% Taken from the Mathematica code: formal_so3_rigid_body.nb.
%

In order to compare our results with other non-geometric integrators, we decided to run analogous simulations using ODE45 integrator. We follow an implementation of ODE45 for Mathematica taken from~\cite{stack}, and we show the results in the plots below.
\begin{figure}[H]
    \centering
        \includegraphics[width = 0.65\textwidth]{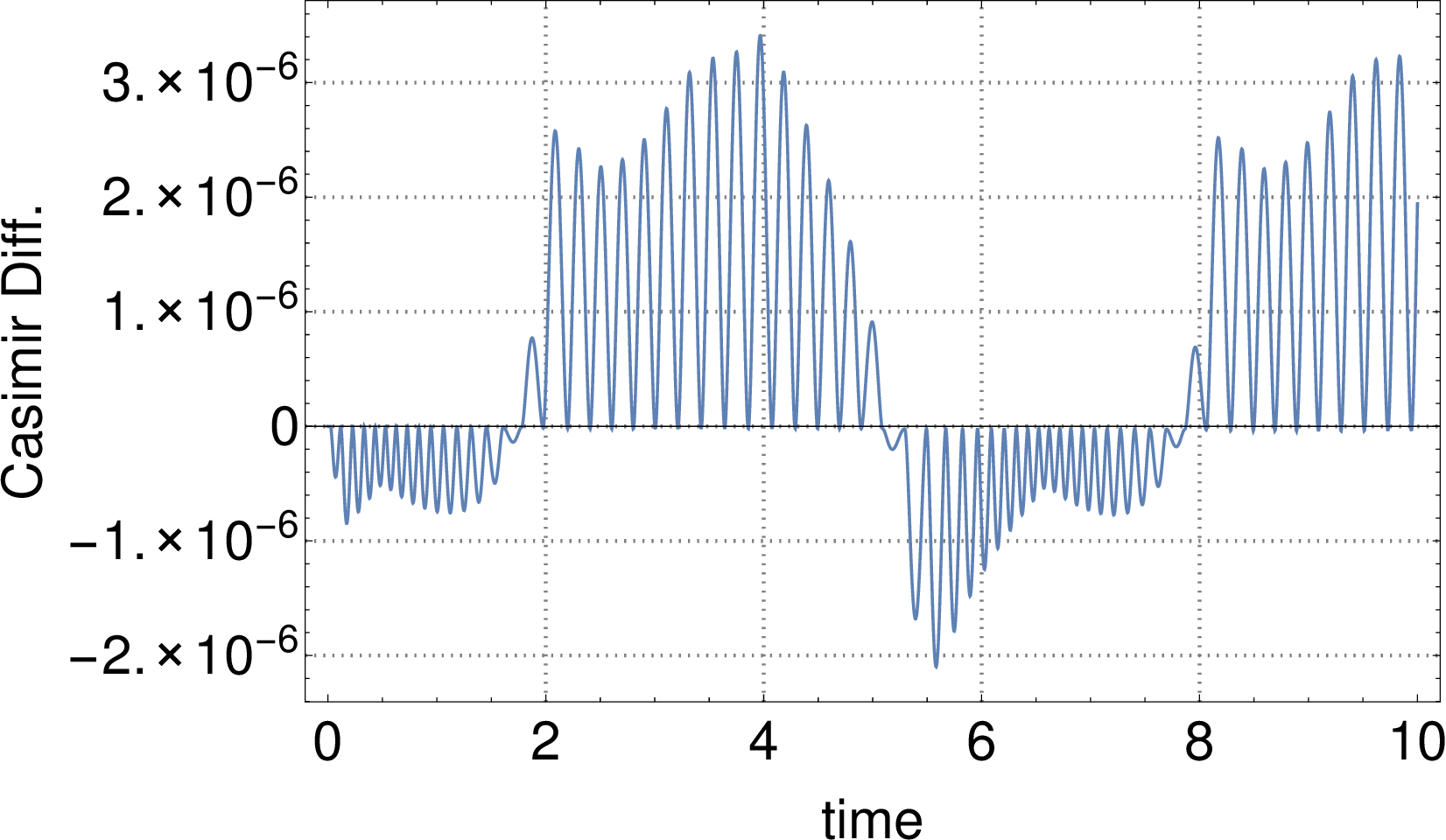}
  \includegraphics[width = 0.65\textwidth]{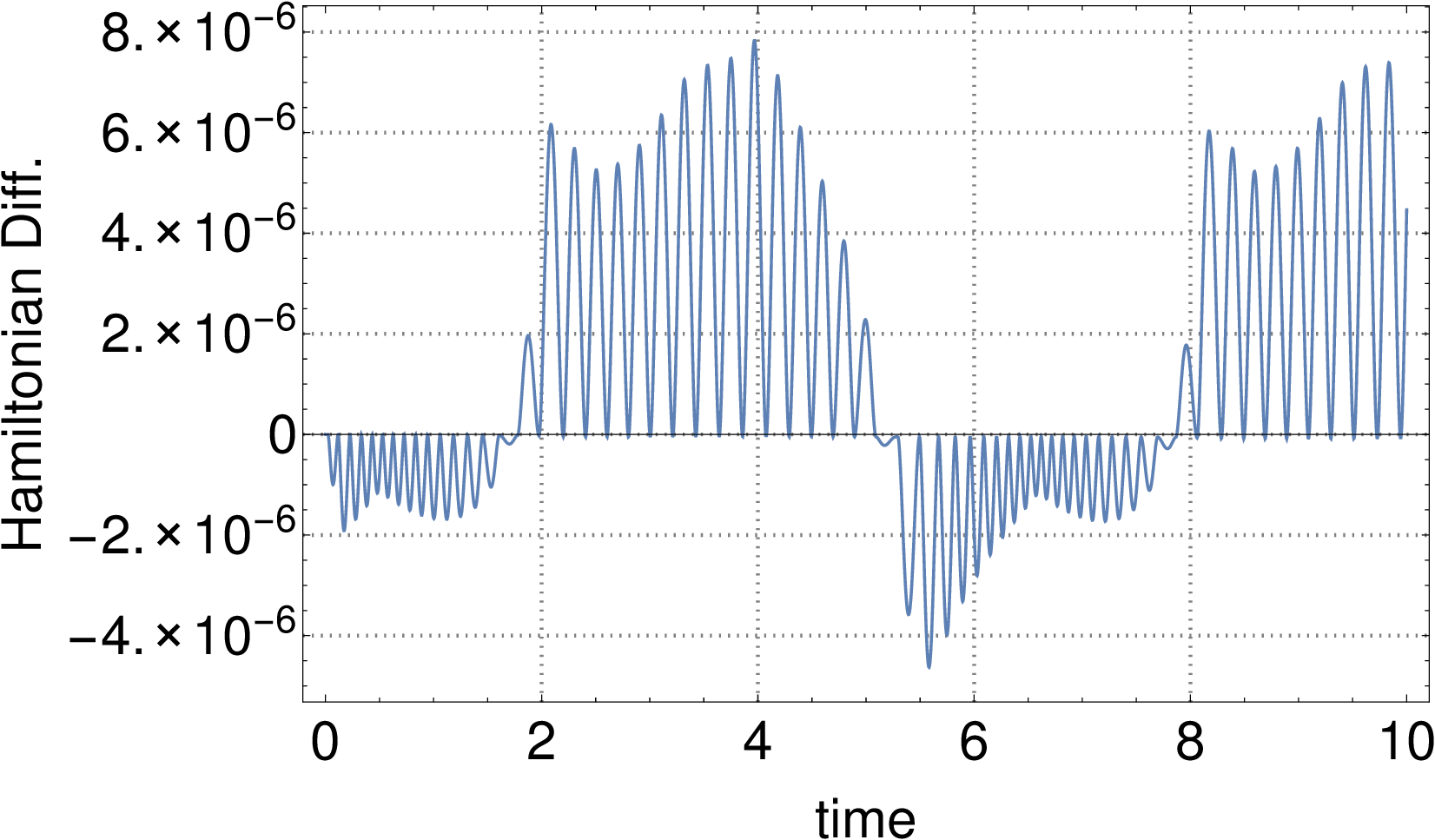}
    \caption{Evolution of Casimir  and Hamiltonian difference throughout a trajectory of ODE45. In this case, dynamics and geometry are conserved at a similar order (order $5$).}
\label{fig:rigid_body_dynamics}
\end{figure}

%
% Taken from the Mathematica code: formal_so3_rigid_body.nb.
%

Finally, we study the conservation of the Poisson tensor through Lagrangian bisections when approximations of the source and target mappings are used. We use the transformation provided by the integrator obtained using Hamitlon-Jacobi in the previous sections and plot the error as a function of $\epsilon$. In this case, once a Lagrangian bisection $L$ is fixed, the error is defined as
\begin{equation}\label{error} \tag{Tensor Cons. Error}
\begin{aligned}
    Error^n(\epsilon) = &\left((\pi(dx^1, dx^2) - \pi((\hphi_{\e,h})^*dx^1, (\hphi_{\e,h})^*dx^2))^2 \right. \notag
    \\ &+ (\pi(dx^1, dx^3) - \pi((\hphi_{\e,h})^*dx^1, (\hphi_{\e,h})^*dx^3))^2  \notag \\
     & \left.+ (\pi(dx^3, dx^2) - \pi((\hphi_{\e,h})^*dx^3, (\hphi_{\e,h})^*dx^2))^2 \right)^{1/2} .
\end{aligned}
\end{equation}
Finally, we approximate the source and target to order $6$ and fix $L$, the solution of the Hamilton-Jacobi equation obtained in the previous section. The plot of error against $\epsilon$ is illustrated in Figure~\ref{fig:rigid_body_poisson_error}.
\begin{figure}[H]
    \centering
    \includegraphics[width = 0.7\textwidth]{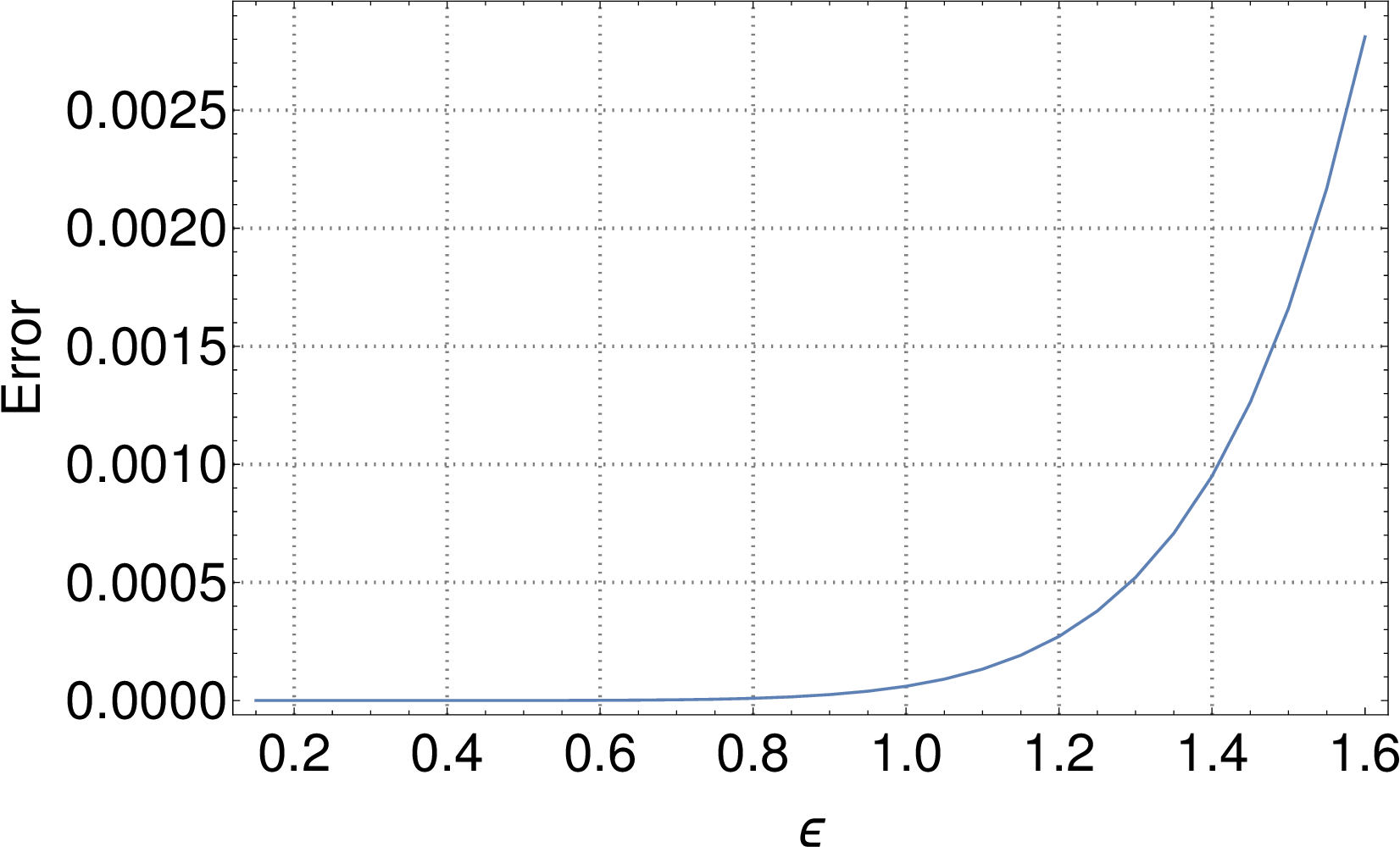}
    \includegraphics[width = 0.7\textwidth]{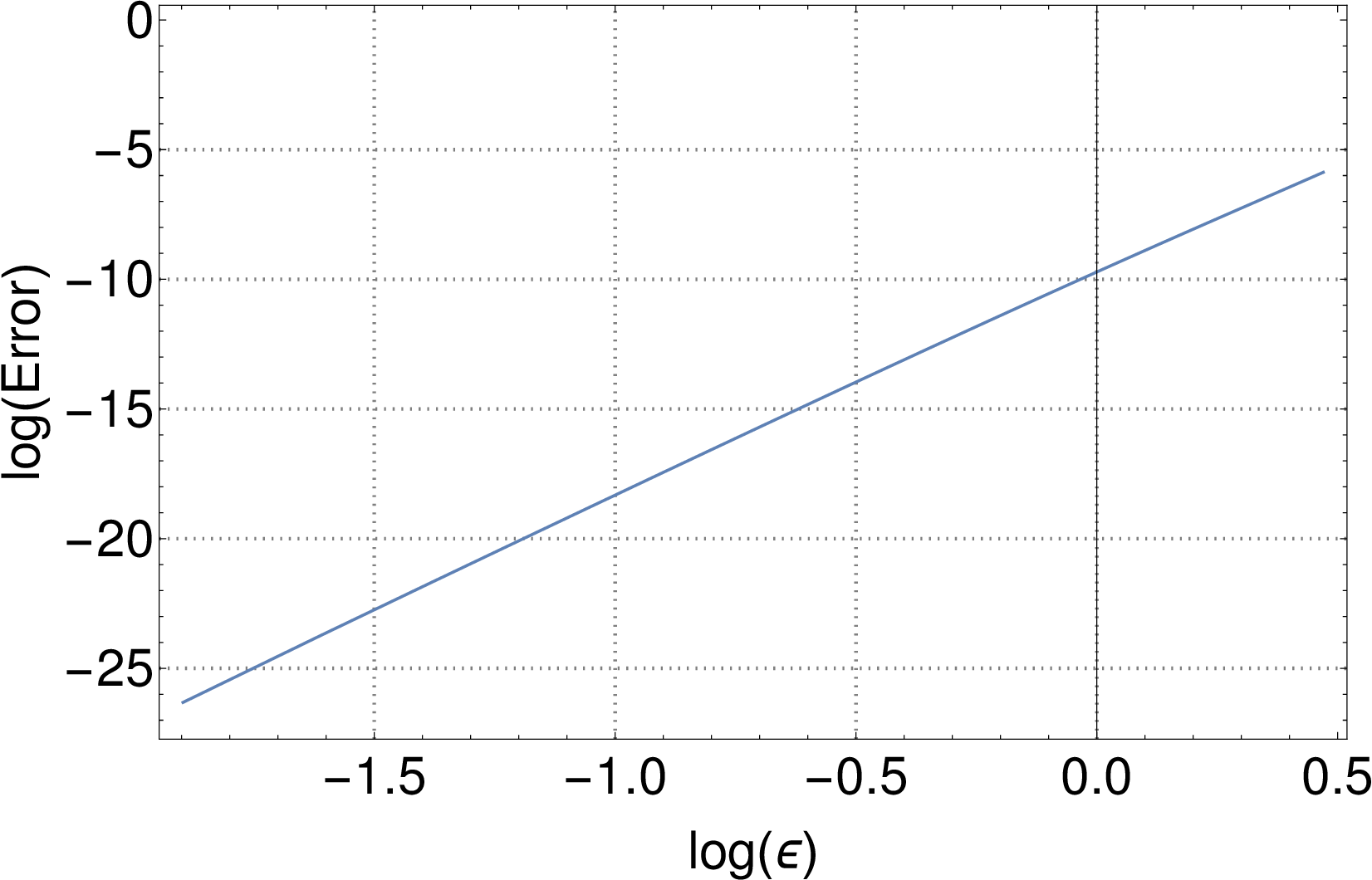}
    \caption{({\it Top}) Plot of the tensor conservation error as a function of epsilon. We observe a very small error even for large values of $\epsilon$. ({\it Bottom}) Plot of the same data as in the figure above, but representing the logarithm of $\epsilon$ versus the logarithm of the error. We observe a slope even larger than the expected one. This is probably due to the fact that the Poisson structure is linear and henceforth easy to approximate.}
    \label{fig:rigid_body_poisson_error}
\end{figure}
%
% Taken from the Mathematica code: formal_so3_rigid_body_error_poisson_tensor.nb.
%

%%%%%%%%%%%%%%%%%%%%%%%%%%%%%%%%%%%%%%%%%%%%%%%%%%%%%%
\subsection{Example $2$: Lotka-Volterra dynamics}
%%%%%%%%%%%%%%%%%%%%%%%%%%%%%%%%%%%%%%%%%%%%%%%%%%%%%%

This example is taken from \cite{colasal23}, where the reader can find more details. The setting is given by $M=\R^3\ni (x_1,x_2,x_3)$ with quadratic Poisson structure 
\[\pi = 2 x_1x_2 \partial_{x_1}\wedge \partial_{x_2} + 2 x_1x_3 \partial_{x_1}\wedge \partial_{x_3} + 2 x_2x_3 \partial_{x_2}\wedge \partial_{x_3}\]
and Hamiltonian
\[
H(x^1,x^2,x^3) = x^1 + x^2 + x^3.
\]
We take
$\epsilon = 0.1$, $h = 2$, and solve the Hamilton-Jacobi  to second $2$. The mapping $\alpha_\epsilon$  is approximated to order $8$. We simulate a trajectory starting from the point $(1,3,3)$.
\begin{figure}[H]
    \centering
    \includegraphics[width = 0.7\textwidth]{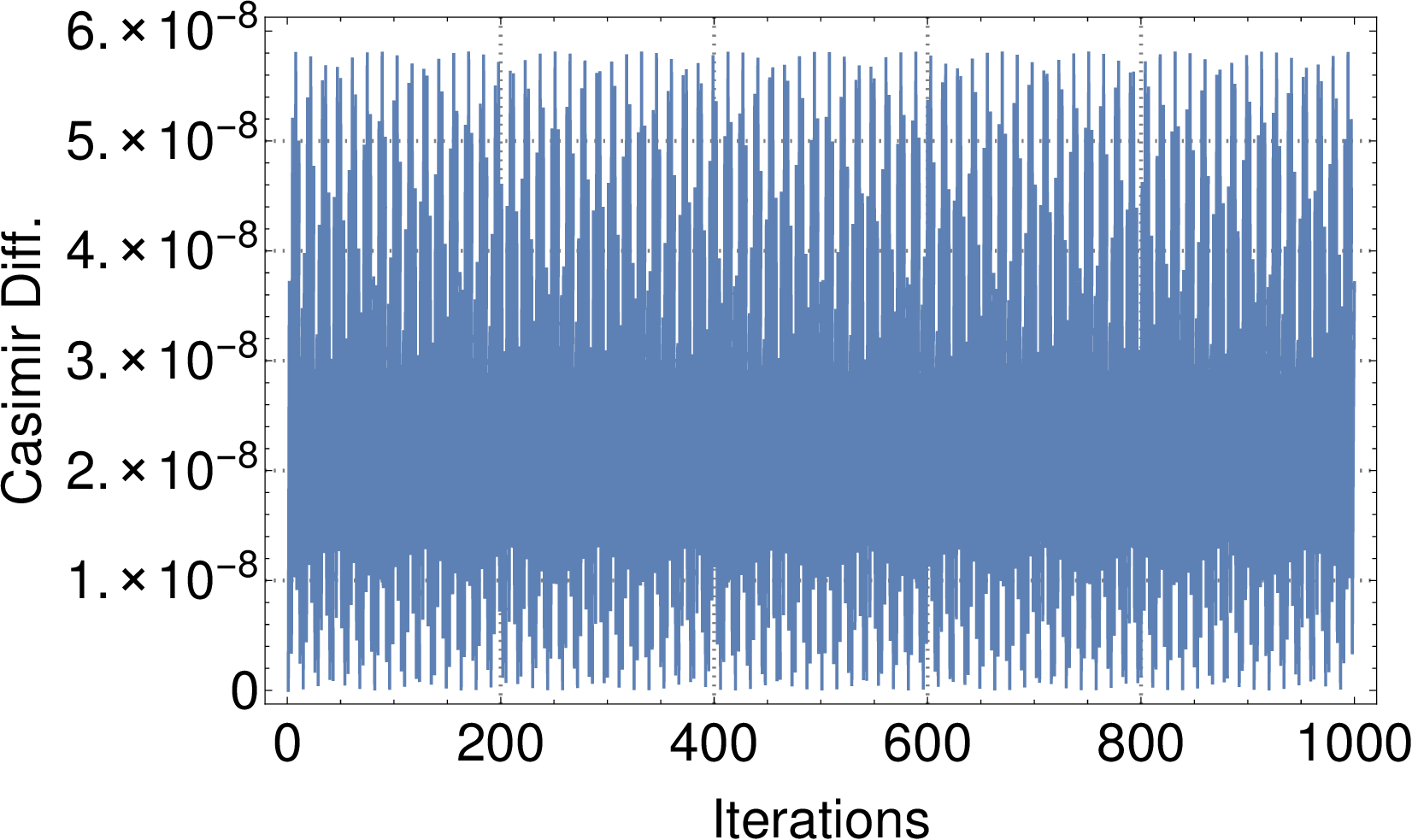}
    \includegraphics[width = 0.7\textwidth]{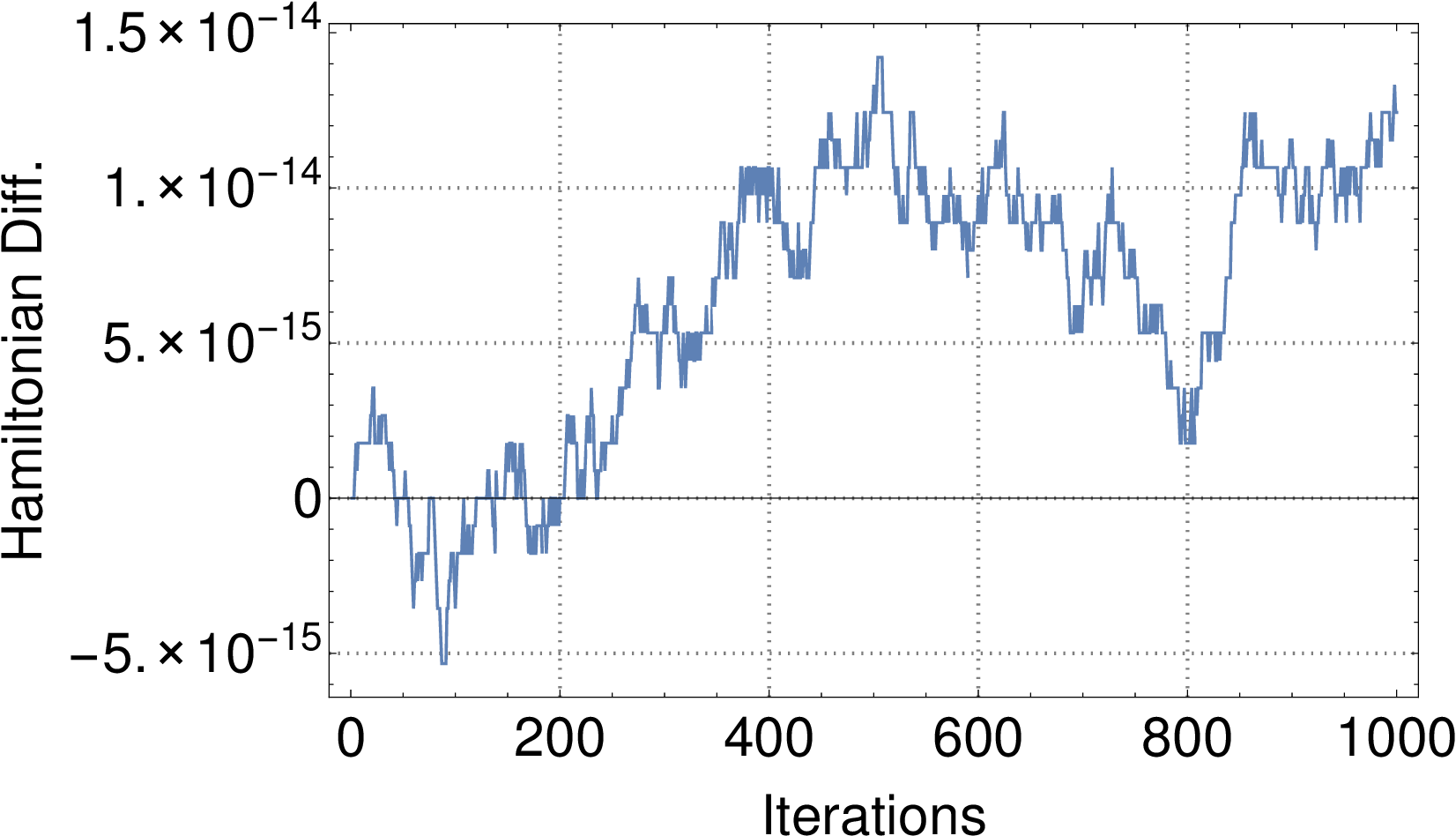}
    \caption{{\it (Top)} Evolution of the difference of Casimir values (value at iteration - initial value) through the simulated trajectory starting at $(1,3,3)$.  ({\it Bottom}) Evolution of the difference of the Hamiltonian across the simulated trajectory.}
    \label{fig:rigid_body_dynamics}
\end{figure}

%
% Taken from the Mathematica code: formal_lotkaVolterra_example_1_casimir_diff_hamiltonian_diff.
%

In order to illustrate the impact of the chosen value of $\epsilon$ in the conservation of the geometry, we take lower value of $\epsilon = 0.01$, and keep the remaining parameters equal. We solve the Hamilton-Jacobi equation to order $2$ and then simulate a trajectory starting from $(1,3,3)$. The evolution of the Hamiltonian and Casimir difference is plotted in Figure~\ref{fig:lv_hamiltonia_poisson_epsilon_001}.% $h = 2$, Hamilton-Jacobi was solved to order $2$, $alpha_\epsilon$ approximated to order $8$
\begin{figure}[H]
    \centering
    \includegraphics[width = 0.7\textwidth]{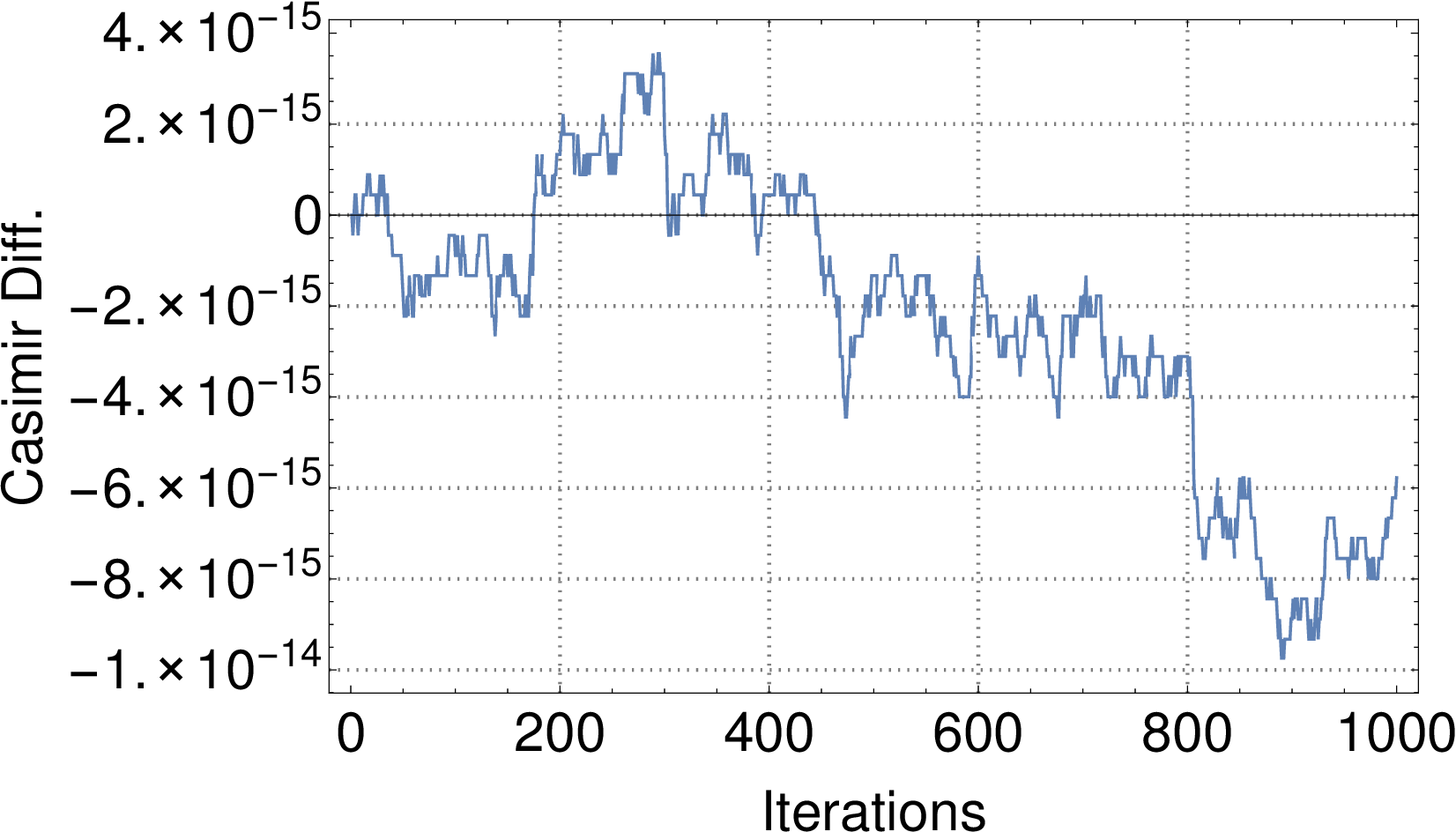}
    \includegraphics[width = 0.7\textwidth]{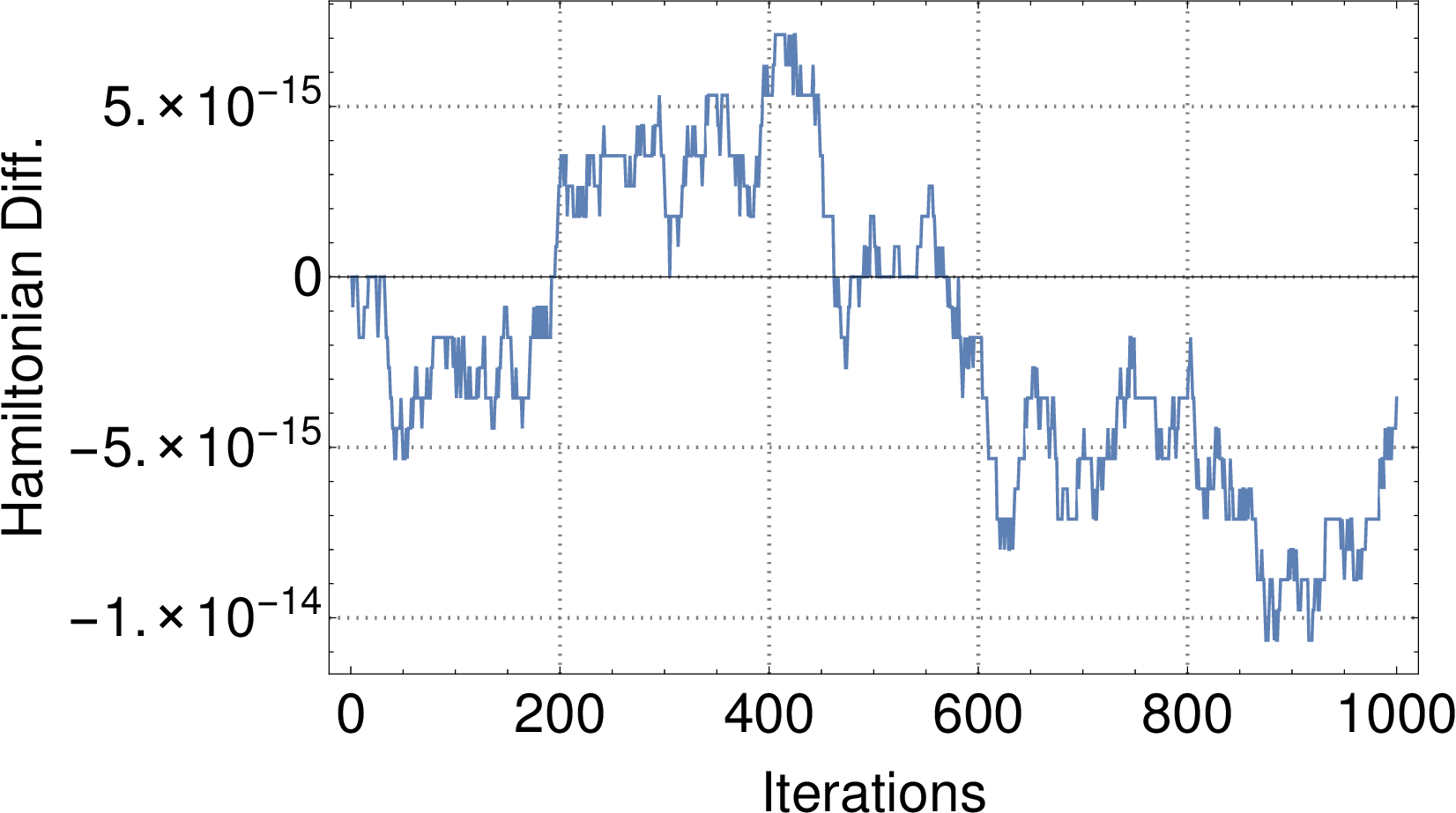}
    \caption{{\it (Top)}  Conservation of the Casimir. We observe a notable increase in accuracy in conservation of the Casimirs, due to the reduction in the value of $\epsilon$.  {\it (Bottom)} Evolution of the Hamiltonian difference through the simulated trajectory. We observe a high conservation of the Hamiltonian, similar to the previous case ($\epsilon = 0.1$).}
    \label{fig:lv_hamiltonia_poisson_epsilon_001}
\end{figure}

%
% Taken from the Mathematica code: formal_lotkaVolterra_example_1_casimir_diff_hamiltonian_diff_2.
%
Finally, we study the conservation of the Poisson tensor, studying the error described in eq.~\eqref{error}. The Figure~\ref{fig:lv_2} below shows nice conservation of the Poisson tensor, even surpassing our theoretical predictions.
\begin{figure}[H]
    \centering
    \includegraphics[width = 0.6\textwidth]{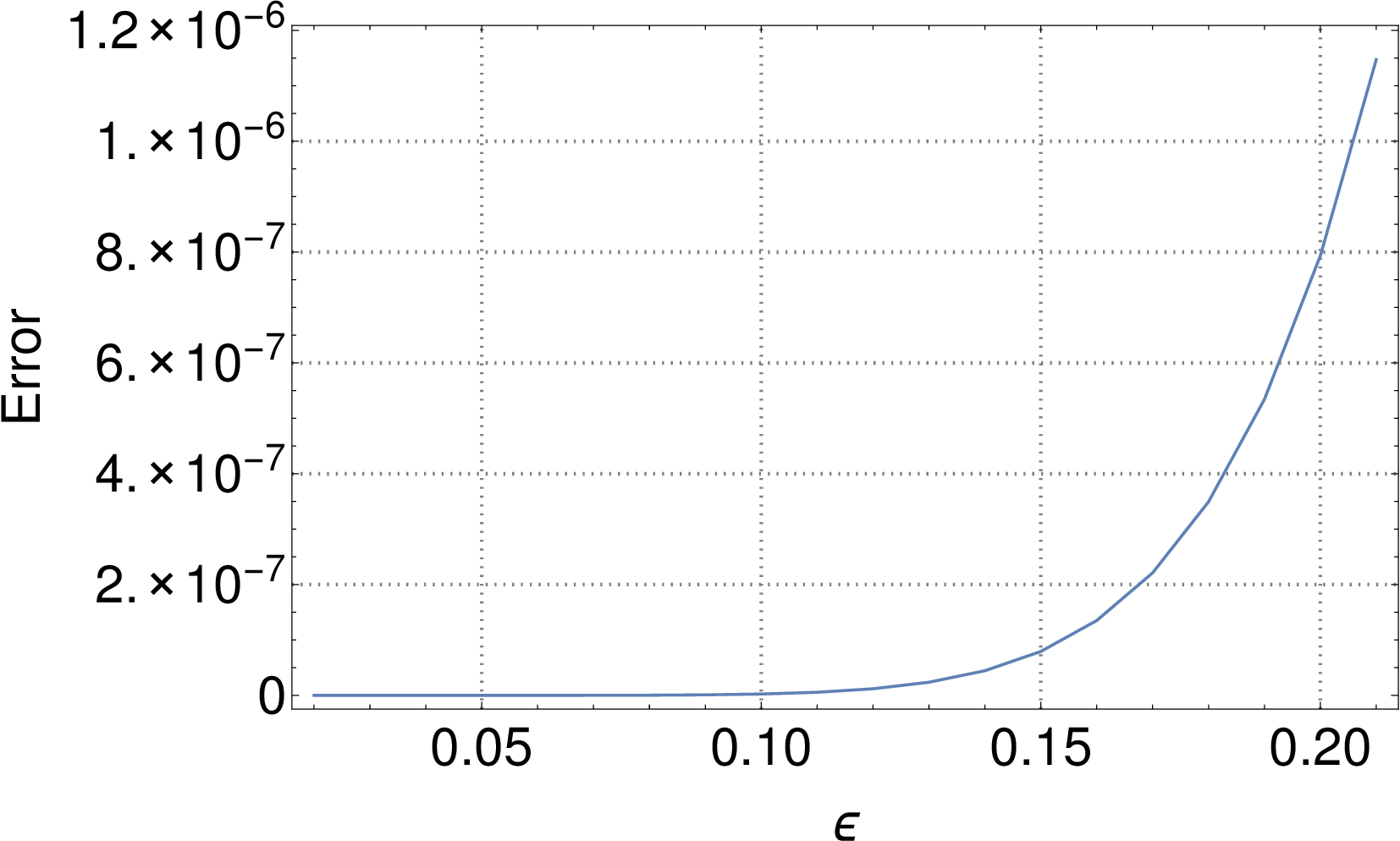}
    \includegraphics[width = 0.6\textwidth]{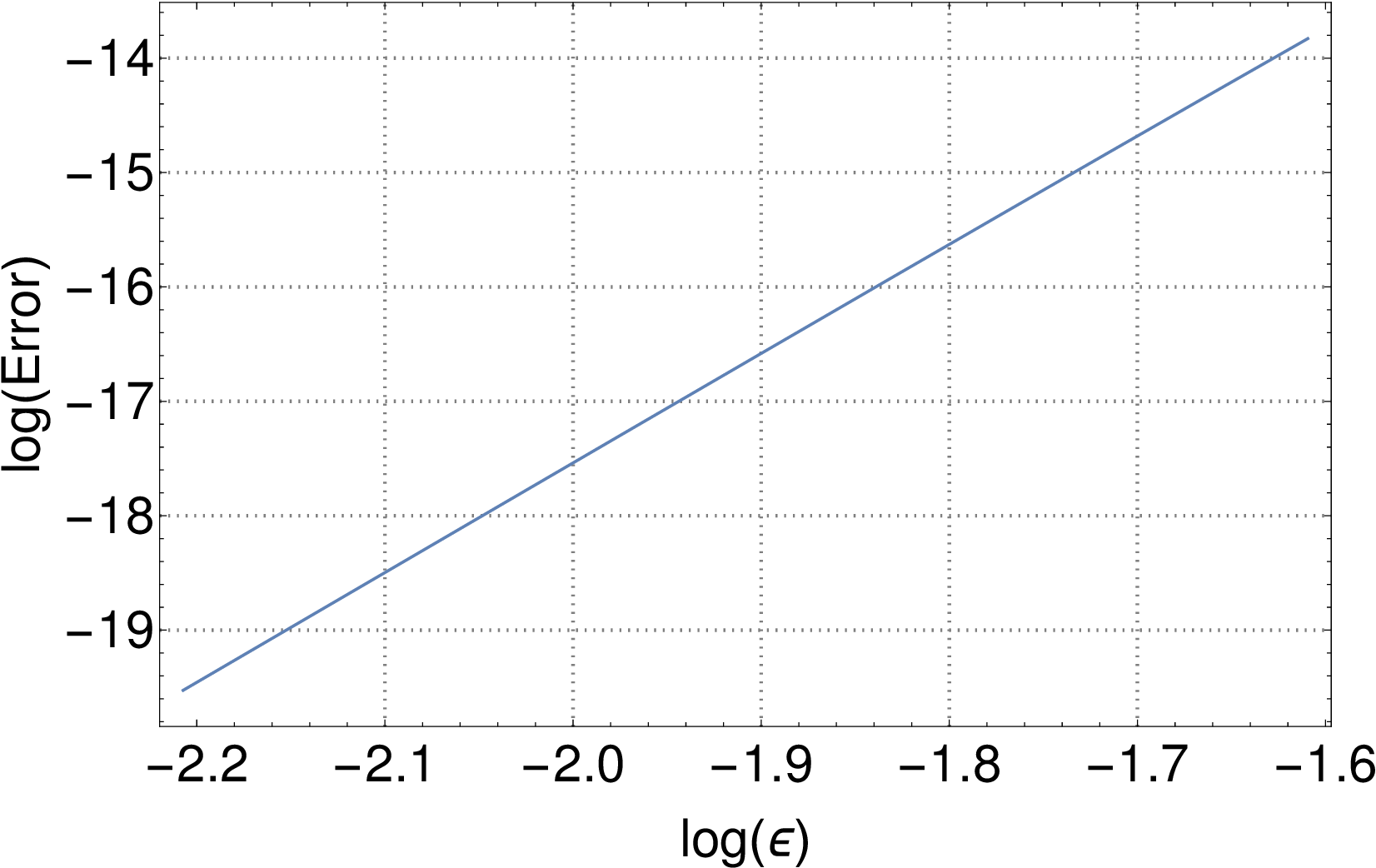}
    \caption{({\it Top:}) Conservation of the error as described in eq.~\eqref{error}, using the same setting as in Figure~\ref{fig:lv_hamiltonia_poisson_epsilon_001}. ({\it Bottom}) Plot equivalent to the one above, but representing the logarithm of $\epsilon$ versus the logarithm of the error. We observe a slope slightly larger than $9$, which matches our theoretical findings.}
    \label{fig:lv_2}
\end{figure}

%
% Taken from the Mathematica code: formal_lotkaVolterra_last.nb.
%
% \subsubsection{Implementation of Collective Integrators}

% Although in this paper we focus on the Hamilton-Jacobi approach, we provided a sound theoretical basis for the construction of collective integrators. Nonetheless, we present here a brief illustration of a possible implementation in the case of Lotka-Volterra system.

%%%%%%%%%%%%%%%%%%%%%%%%%%%%%%%%%%%%%%%%%%%%%%%%%%%%%%
\subsection{Example $3$: Non-canonical symplectic structure}
%%%%%%%%%%%%%%%%%%%%%%%%%%%%%%%%%%%%%%%%%%%%%%%%%%%%%%
The development of symplectic integrators is well-known when the symplectic form is in canonical form. Nonetheless, when the symplectic structure presents a different form there are no general methods to construct symplectic integrators. A well-known and interesting example is the case where the symplectic form is ``twisted'' by a magnetic term. See~\cite{marsden3} and the references therein for a thorough description. Aligned with that case, we consider the Poisson tensor with associated matrix
\[
\pi(x,y,z,p_x,p_y,p_z) = 
\left(
\begin{array}{cccccc}
 0 & 0 & 0 & -1 & 0 & 0 \\
 0 & 0 & 0 & 0 & -1 & 0 \\
 0 & 0 & 0 & 0 & 0 & -1 \\
 1 & 0 & 0 & 0 & x^2 & z \\
 0 & 1 & 0 & -x^2 & 0 & y \\
 0 & 0 & 1 & -z & -y & 0 \\
\end{array}
\right),
\]
which is a simple modification of the canonical symplectic form by the form $B = x^2dx\wedge dy + y dy\wedge dz + z dx \wedge dz$. Our approach hinges on the fact that thinking of the symplectic tensor as a Poisson structure brings the possibility of using the results of this paper. A detailed study of the construction of symplectic integrators in this settings will be carried elsewhere. However, we run here some preliminary test to assess the obtainment of symplectic realizations in this case through the techniques of this paper. The outcome is contained in Figure~\ref{fig:non_canonical_hamiltonian}. The accuracy of this realization  is very important, as the design of integrators depend on it.

% For simplicity, we consider a quadratic Hamiltonian of the form
% \[
% H(x^1,x^2,x^3,p^1,p^2,p^3)= x^2 + y^2 + px^2.
% \]
% We take $\epsilon = 0.1$, stepsize $h = 2$, and solve Hamilton-Jacobi  to order $3$. The mapping $\alpha_\epsilon$ and $\beta_\epsilon$ are approximated to order $4$. In the Figure~\ref{fig:non_canonical_hamiltonian} below we illustrate the evolution of the Hamiltonian for $1000$ iterations.
\begin{figure}[H]
    \centering
    \includegraphics[width = 0.6\textwidth]{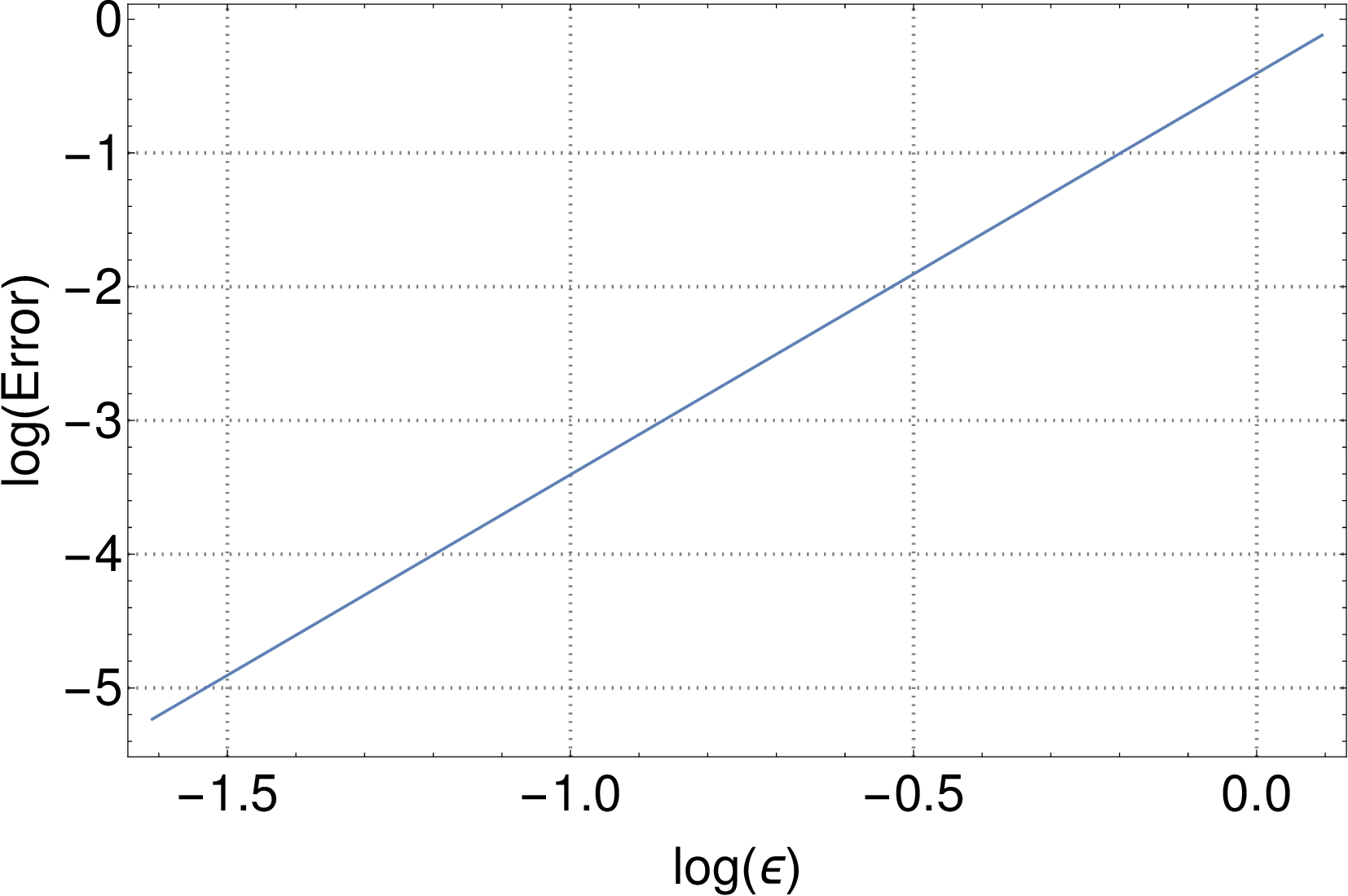}
    \caption{Plot of the logarithm of $\epsilon$ versus the logarithm of the error of the symplectic realization. Error here is measured like in eq.~\eqref{sym:error}. We approximate $\alpha_\epsilon$ to order $2$, so we expect the approximation to be $\mathcal{O}(\epsilon^3)$. This matches exactly the plot, which happens to have a slope of $3$.}
    \label{fig:non_canonical_hamiltonian}
\end{figure}

This paves the way for the obtainment of integrators through the Hamilton-Jacobi theory, collective integrators or any other means.
%
% Taken from the Mathematica code: formal_magneticTerm
%

%%%%%%%%%%%%%%%%%%%%%%%%%%%%%%%%%%%%%%%%%%%%%%%%%%%%%%%%%%%%%%%%5
\section{Conclusions and Future Work}\label{sec:conclusions}

In this paper we advocate  a new strategy for the design of geometric Poisson integrators, where geometry and dynamics are approximated through different processes. The introduced philosophy is promising, both from the theoretical and numerical viewpoints, and paves the way for new methodologies to be exploited in forthcoming studies. We would like to highlight here some of the main points and future thoughts.

\begin{itemize}
    \item {\it Other approximation techniques} should be studied to push the introduced methods further. Our methods rely on the simplest approximation of ODE, just Taylor series approximation. The presented method works well for simple systems, but more elaborated techniques should be studied to outperform and broaden the results introduced here.

    \item For high order approximations the designs presented here may have a {\it high computational burden which slows the computation}. Also, the geometric approximation could become hard at points where the momenta $p$ are large, as the ODE for the approximation of $\alpha_\epsilon$ has a larger error. This raises the questions of which other approximations may be used to alleviate these issues.

    \item We envision {\it applications to learning Poisson dynamical systems} as a by-product of the results of this paper. In a very recent paper (\cite{vaquero2023symmetry}) techniques relying on symplectic groupoids have been used to learn and simulate dynamical systems with symmetry, but the the results where limited to scenarios where the symplectic groupoid was explicitly known before hand. 

\item {\it Backward error analysis} seems not to fall directly into our setting. The fact that the geometry is only conserved approximately seems to be the main obstruction. Nonetheless, a full understanding of the underlying geometry would uncover intriguing properties, like the symmetries explored in Prop.~\ref{prop:Sodd}. Also, by applying a version of backward error analysis, one should demonstrate the near-preservation of the Hamiltonian.

% \marginAC{AC, more questions:

% + backward error analysis? find $\hat \pi_\epsilon, \hat H_\epsilon$ generating $\hat \phi_{\epsilon,t}$

% + preservation of symplectic forms on leaves?
% }
% \marginDM{I think that is a consequence of backwards error analysis for the symplectic flow on the local symplectic groupoid, then the system upwards almost preserves the energy.}
% \marginMV{I do not remember the original question. But if Casimirs are (almosts) conserved, then symplecitc leaves are also almost conserved, no?}
% \marginAC{only for regular leaves... I think one should give a more geometric proof of preservation of leaves (here or elsewhere)}

\item {\it Exploiting the underlying local symplectic groupoid multiplication} is still yet to be fully explored regarding their applications in geometric integration methods. In particular, the generating functions for the groupoid structure described in \cite{ca22} can be put into play in an analogous context of the methods of this paper. These topics will be studied in \cite{cacanar}.
    
\end{itemize}

\section*{Acknowledgements}
AC acknowledges the support of the Brazilian agency grants CNPq 309847/2021-4, 402320/2023-9 and FAPERJ JCNE E-26/203.262/2017. AC also thanks ICMAT, Madrid, for the hospitality and support during parts of the elaboration of this project.
DMdD and MV acknowledge financial support from the Spanish Ministry of Science and Innovation under grants PID2022-137909NB-C21, RED2022-134301-TD and DMdD from the Severo Ochoa Programme for Centres of Excellence in R\&D (CEX2023-001347-S).

\bibliography{references}

\begin{thebibliography}{10}

\bibitem{stack}
ode45 implementation in mathematica.
\newblock \url{https://mathematica.stackexchange.com/questions/220165/runge-kutta-implemented-on-mathematica}.
\newblock Accessed: 06-2024.

\bibitem{AM87}
{\sc Abraham, R., and Marsden, J.}
\newblock {\em Foundations of Mechanics}, second~ed.
\newblock Addison Wesley, 1987.

\bibitem{Albert-Kas-Ker}
{\sc Albert, C.~G., Kasilov, S.~V., and Kernbichler, W.}
\newblock Symplectic integration with non-canonical quadrature for guiding-center orbits in magnetic confinement devices.
\newblock {\em J. Comput. Phys. 403\/} (2020), 109065, 23.

\bibitem{Austin-KWL}
{\sc Austin, M.~A., Krishnaprasad, P.~S., and Wang, L.~S.}
\newblock Almost {P}oisson integration of rigid body systems.
\newblock {\em J. Comput. Phys. 107}, 1 (1993), 105--117.

\bibitem{ca22}
{\sc Cabrera, A.}
\newblock Generating functions for local symplectic groupoids and non-perturbative semiclassical quantization.
\newblock {\em Communications in Mathematical Physics 395\/} (07 2022).

\bibitem{cadhe26}
{\sc Cabrera, A., and Dherin, B.}
\newblock {Formal Symplectic Realizations}.
\newblock {\em International Mathematics Research Notices 2016}, 7 (06 2015), 1925--1950.

\bibitem{cacanar}
{\sc Cabrera, A., Iglesias-Ponte, D., and Marrero, J.}
\newblock Symplectic groupoid multiplication and {P}oisson integrators.
\newblock {\em In preparation\/}.

\bibitem{Channell-Scovel}
{\sc Channell, P.~J., and Scovel, J.~C.}
\newblock Integrators for {L}ie-{P}oisson dynamical systems.
\newblock {\em Phys. D 50}, 1 (1991), 80--88.

\bibitem{co23}
{\sc Cosserat, O.}
\newblock Symplectic groupoids for {P}oisson integrators.
\newblock {\em Journal of Geometry and Physics 186\/} (2023), 104751.

\bibitem{colasal23}
{\sc Cosserat, O., Laurent-Gengoux, C., and Salnikov, V.}
\newblock Numerical methods in {P}oisson geometry and their application to mechanics.
\newblock {\em Mathematics and Mechanics of Solids 0}, 0 (2023), 10812865231217096.

\bibitem{codawein87}
{\sc Coste, A., Dazord, A., and Weinstein, A.}
\newblock Groupoïdes symplectiques.
\newblock {\em Publications du Département de mathématiques (Lyon)}, 2A (1987), 1--62.

\bibitem{Crainic-Fernades-Marcut}
{\sc Crainic, M., Fernandes, R.~L., and Marcut, I.}
\newblock {\em Lectures on {P}oisson geometry}, vol.~217 of {\em Graduate Studies in Mathematics}.
\newblock American Mathematical Society, Providence, RI, 2021.

\bibitem{FengQin}
{\sc Feng, K., and Qin, M.}
\newblock {\em Symplectic geometric algorithms for {H}amiltonian systems}.
\newblock Zhejiang Science and Technology Publishing House, Hangzhou; Springer, Heidelberg, 2010.
\newblock Translated and revised from the Chinese original, With a foreword by Feng Duan.

\bibitem{FengWang}
{\sc Feng, K., and Wang, D.~L.}
\newblock Symplectic difference schemes for {H}amiltonian systems in general symplectic structure.
\newblock {\em J. Comput. Math. 9}, 1 (1991), 86--96.

\bibitem{felemamava17}
{\sc Ferraro, S., León, M., Marrero, J.~C., Martin~de Diego, D., and Vaquero, M.}
\newblock On the geometry of the {H}amilton-{J}acobi equation and generating functions.
\newblock {\em Archive for Rational Mechanics and Analysis 226\/} (10 2017).

\bibitem{Ge}
{\sc Ge, Z.}
\newblock Generating functions, {H}amilton-{J}acobi equations and symplectic groupoids on {P}oisson manifolds.
\newblock {\em Indiana Univ. Math. J. 39}, 3 (1990), 859--876.

\bibitem{haluwa06}
{\sc Hairer, E., Lubich, C., and Wanner, G.}
\newblock {\em Geometric numerical integration}, second~ed., vol.~31 of {\em Springer Series in Computational Mathematics}.
\newblock Springer-Verlag, Berlin, 2006.
\newblock Structure-preserving algorithms for ordinary differential equations.

\bibitem{economics}
{\sc Kamien, M., and Schwartz, N.}
\newblock {\em Dynamic Optimization: The Calculus of Variations and Optimal Control in Economics and Management}.
\newblock A series of volumes in dynamic economics : theory and application. North Holland, 1981.

\bibitem{karasev1993nonlinear}
{\sc Karasev, M., and Maslov, V.}
\newblock {\em Nonlinear Poisson Brackets: Geometry and Quantization}.
\newblock Translations of mathematical monographs. American Mathematical Society, 1993.

\bibitem{Karasev}
{\sc Karas\"ev, M.~V.}
\newblock Analogues of objects of the theory of {L}ie groups for nonlinear {P}oisson brackets.
\newblock {\em Izv. Akad. Nauk SSSR Ser. Mat. 50}, 3 (1986), 508--538, 638.

\bibitem{Karasozen}
{\sc Karas\"ozen, B.}
\newblock Poisson integrators.
\newblock {\em Math. Comput. Modelling 40}, 11-12 (2004), 1225--1244.

\bibitem{LiXiWa}
{\sc Li, X., Xiao, A., and Wang, D.}
\newblock Generating function methods for coefficient-varying generalized {H}amiltonian systems.
\newblock {\em Adv. Appl. Math. Mech. 6}, 1 (2014), 87--106.

\bibitem{marsden3}
{\sc Marsden, J., and Ratiu, T.}
\newblock {\em Introduction to mechanics and symmetry}, vol.~17.
\newblock Springer-Verlag, New York, 1994.
\newblock Second edition, 1999.

\bibitem{McLachlan_2014}
{\sc McLachlan, R.~I., Modin, K., and Verdier, O.}
\newblock Collective symplectic integrators.
\newblock {\em Nonlinearity 27}, 6 (may 2014), 1525.

\bibitem{murray}
{\sc Murray, R.~M., Sastry, S.~S., and Zexiang, L.}
\newblock {\em A Mathematical Introduction to Robotic Manipulation}, 1st~ed.
\newblock CRC Press, Inc., USA, 1994.

\bibitem{vaquero2023symmetry}
{\sc Vaquero, M., Cortés, J., and de~Diego, D.~M.}
\newblock Symmetry preservation in {H}amiltonian systems: Simulation and learning, 2023.

\bibitem{vaquero2024designing}
{\sc Vaquero, M., de~Diego, D.~M., and Cort{\'e}s, J.}
\newblock Designing {P}oisson integrators through machine learning.
\newblock {\em arXiv preprint arXiv:2403.20139\/} (2024).

\bibitem{We71}
{\sc Weinstein, A.}
\newblock Symplectic manifolds and their {L}agrangian submanifolds.
\newblock {\em Advances in Math. 6\/} (1971), 329--346.

\bibitem{Weinstein}
{\sc Weinstein, A.}
\newblock The local structure of {P}oisson manifolds.
\newblock {\em J. Differential Geom. 18}, 3 (1983), 523--557.

\end{thebibliography}
\bibliographystyle{acm}

\end{document}